\newtheorem{theorem}{Theorem}[section]
\newtheorem{definition}[theorem]{Definition}
\newtheorem{proposition}[theorem]{Proposition}
\newtheorem{corollary}[theorem]{Corollary}
\newtheorem{lemma}[theorem]{Lemma}
\newtheorem{remark}[theorem]{Remark}
\numberwithin{equation}{section}
\newcommand{\RR}{\mathbb{R}}
\newcommand{\eps}{\epsilon}
\newcommand{\spa}{\mathrm{span}\,}
\newcommand{\sign}{\mathrm{sign}\,}
\newcommand{\al}{\alpha}
\newcommand{\lb}{\lambda}
\newcommand{\R}{{\mathbb {R}}}
\newcommand{\la}{\left\langle}
\newcommand{\ra}{\right\rangle}
\newcommand{\m}{m}
\newcommand{\mS}{\mathcal{S}}
\newcommand{\mZ}{\mathcal{Z}}
\newcommand{\mT}{\mathcal{T}}
\newcommand{\mB}{\mathcal{B}}
\newcommand{\beq}{\begin{equation}}
\newcommand{\eeq}{\end{equation}}
\newcommand{\f}{\frac}
\newcommand{\dis}{\displaystyle}
\title{Optimal growth for linear processes with affine control}
\author{\textsc{Vincent Calvez}\thanks{Ecole Normale Sup\'erieure de Lyon, UMR CNRS 5669 'UMPA', and Inria Rh\^one-Alpes, projet NUMED, 46 all\'ee d'Italie, F-69364~Lyon~cedex~07, France.
Email: \texttt{vincent.calvez@ens-lyon.fr}} \and
\textsc{Pierre Gabriel}\thanks{Inria Rh\^one-Alpes, \'equipe-projet BEAGLE, BP~52132, 66~Boulevard Niels Bohr, F-69603~Villeurbanne~cedex, France. Email: \texttt{pierre.gabriel@inria.fr}}\;\thanks{Corresponding author}}
\begin{document}

\maketitle

\begin{abstract}
We analyse an optimal control with the following features: the dynamical system is linear, and the dependence upon the control parameter is affine.
More precisely we consider $\dot x_\alpha(t) = (G + \alpha(t) F)x_\alpha(t)$, where $G$ and $F$ are $3\times 3$ matrices with some prescribed structure.
In the case of constant control $\alpha(t)\equiv \alpha$, we show the existence of an optimal Perron eigenvalue with respect to varying $\alpha$ under some assumptions.
Next we investigate the Floquet eigenvalue problem associated to  time-periodic controls $\alpha(t)$.
Finally we prove the existence of an eigenvalue (in the generalized sense) for the  optimal control problem. The proof is based on the results by [Arisawa 1998, Ann. Institut Henri Poincar\'e] concerning the ergodic problem for Hamilton-Jacobi equations. 
We discuss the relations between the three eigenvalues. Surprisingly enough, the three eigenvalues appear to be numerically the same.\bigskip

\noindent{\bf Keywords.} Optimal control; Infinite horizon; Hamilton-Jacobi equation; Ergodic problem; Non coercive hamiltonian
\end{abstract}

\section{Introduction}

We aim at optimally controlling the following 3-dimensional system in the long-time horizon,
\begin{equation}\label{eq:dynsyst}\left\{\begin{array}{l}
\dot x_\alpha (t) = (G+\al(t) F)\,x_\alpha(t)\, , 
\vspace{1mm}\\
x_\alpha(0) = x \in \left(\RR_+\right)^3\setminus\{0\}\,,
\end{array}\right.\end{equation}
where $G,F\in \mathcal M_{3}(\R)$  are matrices with nonnegative off-diagonal entries, and $\alpha:\RR_+\to\RR_+ $ is a nonnegative control parameter.
There is no running reward. Let $T>0$ be the final time.
The final reward is the linear function $\la m, x_\alpha(T)\ra$, where $m\in \left(\RR_+\right)^3\setminus\{0\}$ is in the kernel of $F^T$: $m^T F = 0$. 

We investigate the asymptotic behaviour of the optimal reward when $T\to+\infty$ in the three different regimes: $\alpha$ is constant (constant control), $\alpha$ is periodic (periodic control), and $\alpha$ is any measurable function from $(0,T)$ to $[a,A]$, where $a,A$ are given bounds (optimal control).
 
Because of the linear structure of system \eqref{eq:dynsyst}, we expect an exponential growth of the final reward. This motivates the introduction of the following renormalized reward:
\[ r_\al(T,x) =\frac1T\log\langle m,x_\alpha (T)\rangle\, .\]
When $\al$ is a constant (resp. periodic) control, it is well-known that  
$r_\al(T,x)$ converges to the Perron (resp. Floquet) eigenvalue $\lambda_P(\alpha)$ (resp. $\lambda_F(\alpha)$) of system~\eqref{eq:dynsyst} as $T\to +\infty$. Hence, the best control is obtained by optimizing the  Perron (resp. Floquet) eigenvalue.
In the case of optimal control, we define the best possible reward as follows,
\begin{equation*}
V(T,x) =  \sup_\alpha \left\{ \la m, x_\alpha(T)\ra \;:\;\dot x_\alpha(t) = (G + \alpha(t) F) x_\alpha(t)\;,\; x_\alpha(0) = x  \right\}\, ,
\end{equation*}
where the supremum is taken over all measurable control functions $\alpha: (0,T)\to [a,A]$. We can resolve this optimal control problem using the dynamic programming principle. This yields a Hamilton-Jacobi equation for the value function \[v(s,x) = \sup \left\{ \la m, x_\alpha(T)\ra \;:\;\dot x_\alpha(t) = (G + \alpha(t) F) x_\alpha(t)\;,\; x_\alpha(s) = x  \right\}\,,\] 
defined for intermediate times $0< s < T$. Namely it is solution to the following Hamilton-Jacobi-Bellman equation in the sense of viscosity solutions \cite{Bardi-Capuzzo},
\[ \frac{\partial}{\partial s} v (s,x) + \widetilde H(x, D_x v(s,x)) = 0 \, ,
 \]
where the Hamiltonian is given by $ \widetilde H(x,p) = \max_{\alpha\in [a,A]} \left\{ \la(G + \alpha F)x , p \ra  \right\}$ (see Section~\ref{sec:HJB} for details). 

Under some assumptions we prove the following ergodic result: there exists a constant $\lambda_{HJ}$ such that,
\begin{equation} \forall x \in \left(\RR_+\right)^3\setminus\{0\}\, \quad  \lim_{T\to +\infty}\dfrac{1}{T}\log V(T,x) = \lambda_{HJ}\, . \label{eq:ergodic}\end{equation} 
This constant $\lambda_{HJ}$ is the analog of the Perron and Floquet eigenvalues in the case of optimal control. We have obviously,
\begin{equation*} 
\sup_{\alpha} \lambda_{P}(\alpha) \leq \sup_{\alpha} \lambda_F(\alpha) \leq \lambda_{HJ}\, . 
\end{equation*} 
Interestingly, numerical simulations show that these three eigenvalues may coincide (see Section~\ref{sec:num} for a discussion).

\subsubsection*{Motivations and running example}

From a theoretical viewpoint, the convergence result \eqref{eq:ergodic} is known as the ergodic problem for Hamilton-Jacobi equations. It appears in homogenization problems \cite{LPV}. In this case the constant $\lambda$ is called the effective hamiltonian  \cite{Barles-Evans-Souganidis,Evans-Gomes}. It can be interpreted as a nonlinear Perron eigenproblem associated with an eigenvector $\overline{u}$ which solves the following stationary Hamilton-Jacobi equation in the viscosity sense,
\begin{equation}\label{eq:eigenproblem} - \lambda + H(y,D_y \overline{u}) = 0\, , \quad y \in Y \end{equation}
where $H: Y\times \RR^n \to \RR_+ $ denotes the hamiltonian.
It also appears in weak KAM theory for lagrangian dynamical systems \cite{Fathi,Fathi-book}. A natural way to attack this issue is to consider the following stationary Hamilton-Jacobi equation with small parameter $\eps>0$,
\[ - \eps u_\eps + H(x,D_x u_\eps) = 0\, . \]
The asymptotic behaviour of $u_\eps$, as $\eps\to 0$, has been investigated in several works \cite{LPV,CDL,Fathi,Namah-Roquejoffre,Barles-Souganidis,Barles-Roquejoffre,Imbert-Monneau,Barles-2008,Alvarez-Bardi,Cardaliaguet}.
In many cases the set $Y$ is assumed to be compact, and the Hamiltonian $H(y,p)$ is assumed to be coercive: $H(\cdot,p) \to +\infty$ as $|p|\to +\infty$. Under these assumptions it can be proven that the function $\eps u_\eps$ converges to a constant $\lambda$ (uniquely determined) \cite{LPV,CDL,Fathi}. Moreover the function $u_\eps$ converges uniformly, up to extraction, to some lipschitz function $\overline{u}$, solution of \eqref{eq:eigenproblem}. However the function $\overline{u}$ is generally not unique. The question of convergence of $u_\eps$ towards $\overline{u}$ (modulo a large constant) has been investigated in \cite{Namah-Roquejoffre,Barles-Souganidis,Barles-Roquejoffre}.

In the context of optimal control, the coercivity of the hamiltonian $H(y,p) = \max_{\alpha \in \mathcal A} \{ \la b(y,\alpha), p\ra \}$ is guaranteed under the hypothesis of uniform controllability \cite{CDL,Bardi-Capuzzo}: there exists a constant $\mu>0$ such that 
\begin{equation*} \forall y \in \overline{Y}\quad  B(0,\mu) \subset \overline{\mathrm{convex\; hull}}\{ b(y,\alpha)\; |\; \alpha \in \mathcal A \}\,.  
\end{equation*}
This hypothesis ensures that any two points $y,y'\in Y$ can be connected with some control $\alpha(t)$ within a time $T = O(|y - y'|)$. This yields equicontinuity of the family $(u_\eps)_{\eps>0}$, and thus compactness.

The criterion of uniform controllability is not verified in our case, hence the hamiltonian is not coercive (see Remark \ref{rem:not coercive} below). Different approaches have been developped to circumvent the lack of coercivity. Several works rely on some partial coercivity \cite{Imbert-Monneau,Barles-2008,Alvarez-Bardi}. In \cite{Arisawa-Lions} the authors introduce a non-resonance condition which yields ergodicity. This condition is restricted to hamiltonian with separated variables $H(y,p) = H(p) + V(y)$. In \cite{Cardaliaguet} the author extends this result to the case of a non-convex hamiltonian $H(p)$, in two dimensions of space. In the context of optimal control, Arisawa \cite{Arisawa1,Arisawa2} has shown the equivalence between ergodicity  and the existence of a stable subset $Z\subset Y$ which attracts the trajectories. It is required in addition that the restriction of the system to $Z$ is controllable. The present work follows the latter approach. 
\medskip


From a modeling viewpoint, this work is motivated by the optimization of some experimental protocol for polymer amplification, called Protein Misfolding Cyclic Amplification (PMCA) \cite{Soto}. In the system \eqref{eq:dynsyst} the matrix $G$ represents the growth of polymers in size, whereas $F$ is the fragmentation of polymers into smaller pieces. The vector $m$ encodes the size of the polymers. We restrict to dimension 3 for the sake of simplicity, {\em i.e.} three possible sizes for the polymers (small, intermediate, large).  The orthogonality relation $m^TF = 0$ accounts for the conservation of the total size of polymers by fragmentation. 

We now give a class of matrices which will serve as an example all along the paper. It is a simplification of the discrete growth-fragmentation process introduced in \cite{Masel} to model Prion proliferation. We make the following choice:
\beq\label{eq:example}G = \left(\begin{array}{ccc}
-\tau_{1}& 0 & 0 \\
\tau_{1} &-\tau_{2}& 0 \\
0 & \tau_{2} & 0 
\end{array}\right) \qquad\text{and}\qquad
F = \left(\begin{array}{ccc}
0 & 2\beta_2 & \beta_3 \\
0 & -\beta_2 & \beta_3 \\
0 & 0 & -\beta_3
\end{array}\right).\eeq
Here $\tau_1>0$ denotes the rate of increase from small to intermediate polymers, and $\tau_2>0$ from intermediate to large polymers. The $\beta_i>0$ denote the fragmentation rates which distribute larger polymers into smaller compartments, keeping the total size constant. The size vector is $m = (1\; 2 \; 3)^T$. Finally, the rate $\alpha(t)$ accounts for \emph{sonication}, {\em i.e.} externally driven intensity of fragmentation.

The continuous version of the  baby model \eqref{eq:dynsyst}-\eqref{eq:example} consists in the following size-structured PDE with variable coefficients~\cite{Greer,CL1,DG}
\beq
\label{eq:pde}
\partial_tf(t,\xi) + \partial_\xi(\tau(\xi)f(t,\xi)) = \alpha(t) \left( 2 \int_\xi^\infty\beta(\zeta)\kappa(\xi,\zeta)f(t,\zeta)\,d\zeta - \beta(\xi)f(t,\xi)\right)\, .
\eeq
Here, $f(t,\xi)$ represents the density of protein polymers of size $\xi>0$ at time $t$.
The transport term accounts for the growth of  polymers in size, whereas the r.h.s is the fragmentation operator.
The final reward is the total mass of polymers, namely $\int_0^\infty\xi u(T,\xi)\,d\xi$ (see~\cite{GabrielPhD} for more details).
The Perron eigenvalue problem for \eqref{eq:pde} has been investigated in \cite{DG,CDG}.

\subsubsection*{Main assumptions}

\begin{description}
\item[(H1)] We assume that the matrices $G$ and $F$ are both reducible. We assume that the matrix $G+\alpha F$ is irreducible for all $\alpha>0$. 
\end{description}
From the Perron-Frobenius Theorem there exists a simple dominant eigenvalue $\lambda_P(\alpha)$ and a positive left- (resp. right-) eigenvector $e_\alpha$ (resp. $\phi_\alpha$) such that:
\beq\label{def:eigenelements}
\left\{\begin{array}{l}
(G+\al F)e_\alpha=\lb_P(\alpha) e_\alpha,\quad e_\alpha>0\,,
\medskip\\
\phi_\alpha^T(G+\al F)=\lb_P(\alpha)\phi_\alpha^T,\quad\phi_\alpha>0\,.               
\end{array}\right.\eeq
We choose the following normalizations:
\[ \la m , e_\alpha \ra=1\,,\quad \la \phi_\alpha, e_\alpha\ra =1\,.\]
We denote by $(\lambda_1(\alpha), \lambda_2(\alpha), \lambda_3(\alpha))$ the eigenvalues of the matrix $G + \alpha F$, where $\lambda_1(\alpha) \in \R$ is the dominant eigenvalue $\lambda_P(\alpha)$.
We shall use repeatedly the spectral gap property of $G+\alpha F$. We denote $\mu_\alpha = \min_{i = 2,3}(\lambda_1 - \Re(\lambda_i))$, where $(\lambda_i)_{i = 2,3}$ are the two other (possibly complex) eigenvalues of $G+ \alpha F$. By continuity and compactness, $\mu_\alpha$ is uniformly strictly positive on compact intervals of $(0,+\infty)$. 
\begin{description}
\item[(H2)] We assume that the Perron eigenvalue $\lambda_P(\alpha)$ is bounded admits a global maximum as $\alpha\in (0,+\infty)$. This maximum is attained at $\alpha = \alpha^*$. We assume that the bounds $a,A$ are such that $a< \alpha^*< A$.
\end{description}
We will show in the next Section that this assumption is satisfied for the example \eqref{eq:example} provided that $\tau_2>2\tau_1$. This condition can be justified heuristically \cite{CDG}. In fact, when growth of intermediate polymers is fast, it is interesting to have a significant fraction of intermediate polymers in the population in order to optimally increase the total size of the population. On the contrary, when $\alpha\to +\infty$ then the  eigenvector $e_\alpha$ converges towards $(1\; 0\; 0)^T$ because fragmentation is very large. 

The matrix $G$ possesses a nonnegative eigenvector $e_0 = \lim_{\alpha\to 0} e_\alpha$, associated to a dominant eigenvalue $\lambda_P(0)$. 
On the other hand, from Hypothesis {\bf (H2)} and  \eqref{def:eigenelements} we deduce  that $e_\alpha$ converges to a nonnegative eigenvector $e_\infty$ as $\alpha\to +\infty$, with $e_\infty\in \ker F$.

\begin{description}
\item[(H3)] We assume that both the eigenvectors $e_0$ and $e_\infty$ have at least one zero coordinate. 
\end{description}
This Hypothesis is compatible with the reducibility of both $G$ and $F$. However since $G+\alpha F$ is irreducible for $\alpha>0$, we have $e_0\neq e_\infty$. In fact $e_\alpha>0$ is the unique nonnegative eigenvector up to a multiplicative constant. Furthermore, $e_\infty$ is not an eigenvector for $G$. In particular we have $Ge_\infty\neq 0$.
In the case of the running example \eqref{eq:example}, we have $e_0 = (0\; 0\; 1/3)^T$ and $e_\infty = (1\; 0\; 0)^T$.

\begin{description}
\item[(H4)-(H5)] We assume two technical conditions, related to the dynamics of the trajectories of \eqref{eq:dynsyst} projected on the simplex $\mS = \{ y\geq 0\;:\; \la m,y \ra = 1 \}$. We refer to Section \ref{ssec:result} for the statement of these conditions.
\end{description}
These two technical conditions are satisfied for the running example \eqref{eq:example}. 
It is not clear whether these two conditions are necessary or not for the validity of our result. We refer to Section \ref{sec:num} for a discussion.


\bigskip 

In Section~\ref{sec:PerronFloquet}, we give conditions on the running example to ensure the existence of an optimal constant control which maximizes the Perron eigenvalue.
Then we consider periodic controls and we investigate the variations of the Floquet eigenvalue around this best constant control.
In Section~\ref{sec:HJB}, we turn to the full optimal control problem for which the control $\al(t)$ is any measurable function taking values in  $[a,A].$
We prove the main result of this paper which is the convergence of the best reward when $T\to\infty$ \eqref{eq:ergodic}.
The technique consists in solving an ergodic problem for the Hamilton-Jacobi-Bellman equation.
Finally we give numerical evidences that the ergodicity constant $\lambda_{HJ}$ coincides with the best Perron eigenvalue in the case of the running example \eqref{eq:example}. Finally we discuss some  possible ways to remove each of the assumptions {\bf (H1-5)}.

\section{Optimizing Perron and Floquet eigenvalues}\label{sec:PerronFloquet}

We begin with a constant control $\alpha>0$. 
Since the eigenvalue $\lambda_P(\alpha)$ is simple, we have the following asymptotic behaviour:
%
\begin{equation}
\lim_{ t\to +\infty}x_\alpha(t)e^{-\lb_P(\alpha)t} = \langle \phi_\alpha,x \rangle e_\alpha\,. 
\label{eq:perron taylor}
\end{equation}
Plugging this convergence property into our optimization problem, we obtain the expansion
\[r_\al(T,x)=\lb_P(\al)+\frac{1}{T} \log\langle  \phi_\alpha,x \rangle +o\left(\frac1T\right)\,. \]
Thus in the class of constant controls, our optimization problem reduces to maximizing the Perron eigenvalue.
The following Proposition gives an answer to this problem in the case of the running example~\eqref{eq:example} (see also Figure~\ref{fig:Perron}).

\begin{figure}
\begin{center}
\includegraphics[width = 0.6\linewidth]{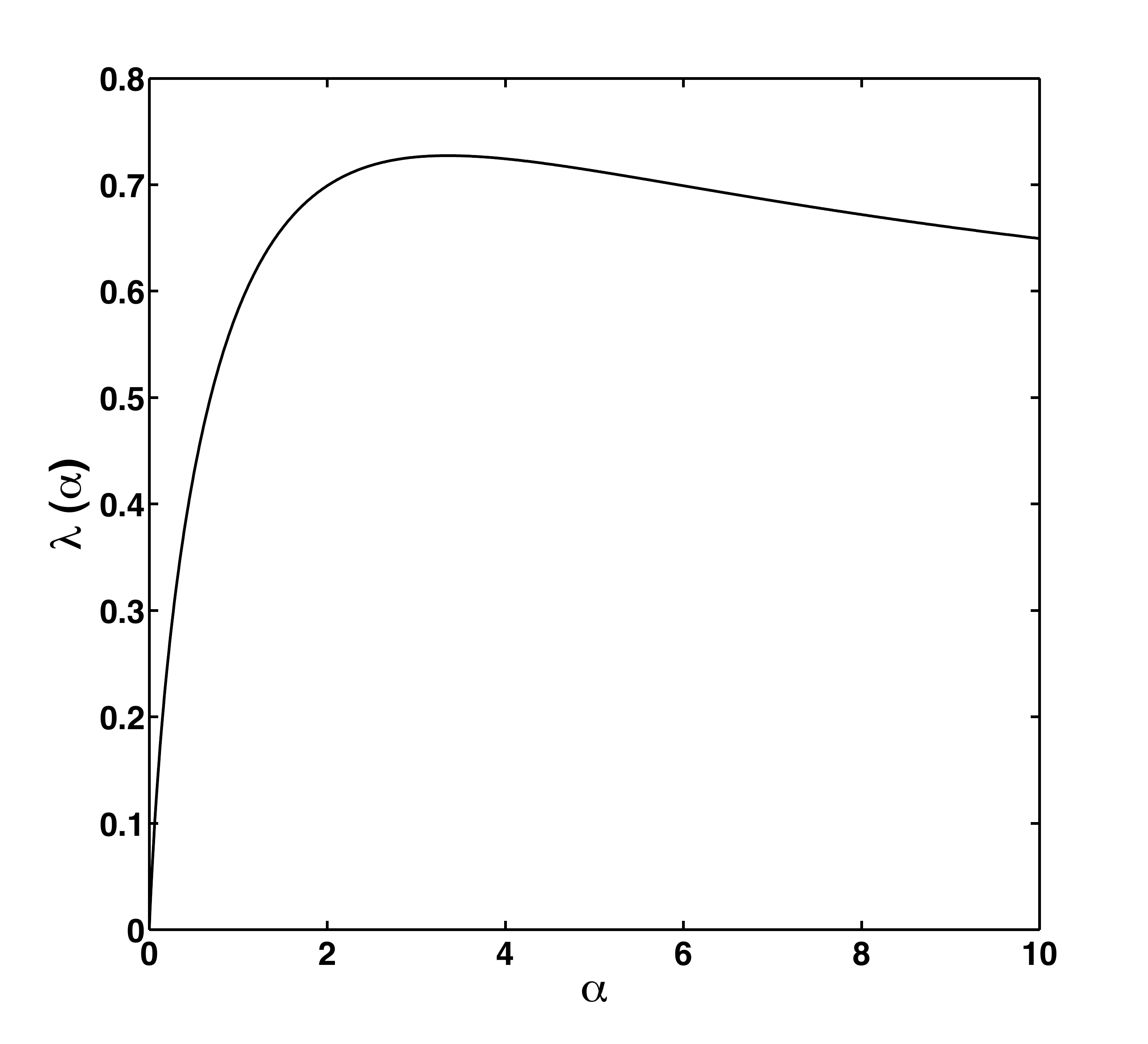}
\caption{Dominant eigenvalue of the matrix $G+\alpha F$ as a function of the control parameter $\alpha$. The function achieves a maximum for $\alpha = \alpha^* \in (0,+\infty) $.}\label{fig:Perron}
\end{center}
\end{figure}

\begin{proposition}\label{prop:n=3}
There exists a maximal eigenvalue $\lambda_P(\alpha^*)$ for some $\alpha^* \in (0,+\infty)$ if and only if $\tau_2>2\tau_1$. 
Furthermore we have 
the following alternative:
\begin{itemize}
\item either $\tau_2\leq 2\tau_1$ and $\lb_P(\al)$ increases from $0$ to $\tau_1,$
\item or $\tau_2>2\tau_1$ and $\lb_P(\al)$ first increases from $0$ to $\lb_P(\al^*)$ and then decreases to $\tau_1.$
\end{itemize}
\end{proposition}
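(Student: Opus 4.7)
The plan is to study the characteristic polynomial $P(\lambda,\alpha)=\det(\lambda\,\Id-(G+\alpha F))$ and to follow its largest real root $\lambda_P(\alpha)$ through its two endpoint values together with the (possible) crossing of the level $\lambda=\tau_1$. First I would identify the limits. The matrix $G$ is lower triangular with diagonal $(-\tau_1,-\tau_2,0)$, so $\lambda_P(0)=0$. For the behaviour at $+\infty$, the conservation relation $m^TF=0$ with $m=(1,2,3)^T$ gives
\[
\lambda_P(\alpha)=m^T(G+\alpha F)e_\alpha=m^TGe_\alpha=\tau_1 e_\alpha^{(1)}+\tau_2 e_\alpha^{(2)},
\]
and since $e_\alpha\to e_\infty=(1,0,0)^T\in\ker F$ one deduces $\lambda_P(\alpha)\to\tau_1$.

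The decisive algebraic input is the explicit identity
\[
P(\tau_1,\alpha)=2\tau_1^2(\tau_1+\tau_2)+\tau_1\alpha\beta_3(2\tau_1-\tau_2),
\]
affine in $\alpha$, in which the threshold $\tau_2=2\tau_1$ appears spontaneously. Since $\lambda_P(\alpha)$ is the largest real root of the monic polynomial $P(\cdot,\alpha)$, one has $P(\tau_1,\alpha)>0\Leftrightarrow\lambda_P(\alpha)<\tau_1$. Consequently, if $\tau_2\leq 2\tau_1$ then $\lambda_P<\tau_1$ on $(0,+\infty)$, whereas if $\tau_2>2\tau_1$ then $P(\tau_1,\cdot)$ vanishes at a unique point $\alpha_c=\frac{2\tau_1(\tau_1+\tau_2)}{\beta_3(\tau_2-2\tau_1)}$, with $\lambda_P(\alpha_c)=\tau_1$, $\lambda_P<\tau_1$ on $(0,\alpha_c)$ and $\lambda_P>\tau_1$ on $(\alpha_c,+\infty)$.

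To classify interior critical points, I would differentiate $P(\lambda_P(\alpha),\alpha)=0$ twice. At any $\alpha^*$ with $\lambda_P'(\alpha^*)=0$ this yields $\lambda_P''(\alpha^*)=-P_{\alpha\alpha}/P_\lambda$ evaluated at $(\lambda_P(\alpha^*),\alpha^*)$. A short computation gives $P_{\alpha\alpha}(\lambda,\alpha)=2\beta_2\beta_3(\lambda-\tau_1)$, while $P_\lambda(\lambda_P,\alpha)>0$ because $\lambda_P$ is the largest real root of a monic polynomial. Therefore every critical point satisfies $\sign\lambda_P''(\alpha^*)=-\sign(\lambda_P(\alpha^*)-\tau_1)$, so a critical point with $\lambda_P<\tau_1$ is a strict local minimum and one with $\lambda_P>\tau_1$ a strict local maximum.

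Combining these ingredients with the initial slope $\lambda_P'(0^+)=\beta_3>0$, obtained by implicit differentiation at $(0,0)$ (legitimate since $\lambda_P(0)=0$ is a simple eigenvalue of $G$), the dichotomy follows. If $\tau_2\leq 2\tau_1$ then $\lambda_P<\tau_1$ everywhere; any interior critical point would be a local minimum, which would have to be preceded by a local maximum in order to reverse the initial increase of $\lambda_P$ near $0$, but no local maximum can exist in $\{\lambda_P<\tau_1\}$. Hence $\lambda_P$ strictly increases from $0$ to $\tau_1$. If on the contrary $\tau_2>2\tau_1$, the same argument yields strict monotonicity on $(0,\alpha_c)$; on $(\alpha_c,+\infty)$, Rolle's theorem applied between $\alpha_c$ and $+\infty$ (where $\lambda_P=\tau_1$) delivers at least one critical point, while two consecutive local maxima there would force an intermediate local minimum, impossible since $\lambda_P>\tau_1$ on that interval. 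Hence the critical point $\alpha^*$ is unique, $\lambda_P$ strictly increases on $(0,\alpha^*)$ and strictly decreases to $\tau_1$ on $(\alpha^*,+\infty)$. The main obstacle is precisely this uniqueness of $\alpha^*$, handled by the second-derivative sign rule that forbids the alternation of extrema inside each region $\{\lambda_P<\tau_1\}$ and $\{\lambda_P>\tau_1\}$; the rest reduces to careful bookkeeping of the endpoint behaviour together with the explicit formula for $P(\tau_1,\alpha)$.
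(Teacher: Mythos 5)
Your overall route is the same as the paper's: work with the characteristic polynomial, evaluate it at $\lambda=\tau_1$ to extract the threshold $\tau_2=2\tau_1$, and classify critical points by differentiating $P(\lambda_P(\alpha),\alpha)=0$ twice, the sign of $\lambda_P''$ being that of $\tau_1-\lambda_P$. Two of your refinements are welcome: the positivity of $P_\lambda$ at the Perron root (largest \emph{and simple} root, simplicity coming from Perron--Frobenius under {\bf (H1)}) is cleaner than the paper's explicit lower bound on the denominator, and the initial slope $\lambda_P'(0^+)=\beta_3>0$ together with the bookkeeping of extrema is a legitimate way to conclude.

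There is, however, one genuine gap at the pivotal step. The equivalence $P(\tau_1,\alpha)>0\Leftrightarrow\lambda_P(\alpha)<\tau_1$ does \emph{not} follow merely from $\lambda_P$ being the largest real root of a monic cubic. The implications $\lambda_P<\tau_1\Rightarrow P(\tau_1)>0$ and $P(\tau_1)<0\Rightarrow\lambda_P>\tau_1$ are fine, but the direction you actually use, $P(\tau_1)>0\Rightarrow\lambda_P<\tau_1$, fails in general: $(x-3)(x-1)(x+10)$ is positive at $x=0$ although its largest root is $3$; nothing in your argument excludes $\tau_1$ lying between the two smaller real roots. This is precisely where the paper inserts the auxiliary check that $P'(\lambda)>0$ for $\lambda\geq\tau_1$, so that $P(\tau_1)>0$ forces $P>0$ on $[\tau_1,+\infty)$ and hence no root $\geq\tau_1$; you need this (easy) verification, or a substitute such as: since $\lambda_P(0)=0<\tau_1$ and $\lambda_P$ is continuous, $\lambda_P$ can reach the level $\tau_1$ only at a parameter where $\tau_1$ is an eigenvalue, i.e.\ where $P(\tau_1,\alpha)=0$, which gives $\lambda_P<\tau_1$ wherever $P(\tau_1,\cdot)>0$, and at $\alpha_c$ the domination of the Perron root over the real eigenvalues yields $\lambda_P(\alpha_c)=\tau_1$. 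A smaller point: your limit $\lambda_P(\alpha)\to\tau_1$ invokes $e_\alpha\to e_\infty=(1\;0\;0)^T$, which the paper deduces from {\bf (H2)} --- the very property being established here for the example --- so you should justify it independently (e.g.\ $\lambda_P=\tau_1e_\alpha^{(1)}+\tau_2e_\alpha^{(2)}$ is bounded, hence every limit point of $e_\alpha$ lies in $\ker F\cap\mathcal{S}=\{e_\infty\}$), or simply divide $P(\lambda_P(\alpha))=0$ by $\alpha^2$ as the paper does.
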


\begin{proof}
The characteristic polynomial of $G+\al F$ is
\[P(X)=X^3+\left(\tau_1+\tau_2+\al(\beta_2+\beta_3)\right)X^2+\left(\tau_1\tau_2+\al\tau_1(\beta_3-\beta_2)+\al^2\beta_2\beta_3\right)X-\al\tau_1\tau_2\beta_3-\al^2\tau_1\beta_2\beta_3\,. \]
The Perron eigenvalue $\lb_P(\alpha)$ is the largest zero of this polynomial, so $P(\lb_P(\al))=0$ and, dividing by $\al^2$ we get that that $\lim_{\al\to+\infty}\lb_P(\al)=\tau_1.$ We easily check that $P'(\lambda)>0$ for $\lambda>\tau_1$, and we evaluate
\[
P(\tau_1)=2\tau_1^2(\tau_1+\tau_2)+\al\tau_1\beta_3(2\tau_1-\tau_2)\, .
\]
We have the following alternative: either $\tau_2<2\tau_1$ and $P(\tau_1)>0$ for all $\alpha>0$, or $\tau_2>2\tau_1$ and $P(\tau_1)<0$ for large $\alpha$.
In the former case, we have $\lambda_P(\alpha)<\tau_1$ for all $\alpha>0$. In the latter case, we have $\lambda_P(\alpha)>\tau_1$ for large $\alpha$.
On the other hand we have $\lambda_P(0) = 0$. Therefore, the condition $\tau_2>2\tau_1$ is necessary and sufficient for $\lambda_P$ to reach a global maximum.

Differentiating the relation $P(\lb_P(\al))=0,$ we get that any critical point $\alpha$ such that $\lambda_P'(\al)=0$ satisfies
\begin{equation} \label{eq:lpa} (\beta_2 + \beta_3)\lb_P(\al)^2 + \tau_1(\beta_3 - \beta_2) \lb_P(\al) + 2\alpha \beta_2 \beta_3 \lb_P(\al) = \tau_1 \tau_2 \beta_2 \beta_3 + 2 \alpha \tau_1 \beta_2 \beta_3\, . \end{equation}
Differentiating twice the relation $P(\lb_P(\al))=0,$ we get that any such critical point $\alpha$ satisfies
$$\lb_P''(\al)=\f{2\beta_2\beta_3(\tau_1-\lb_P(\alpha))}{3\lb_P(\alpha)^2+2(\tau_1+\tau_2+\al(\beta_2+\beta_3))\lb_P(\alpha)+\tau_1\tau_2+\al\tau_1(\beta_3-\beta_2)+\al^2\beta_2\beta_3}.$$
We claim that the denominator is always positive, despite the possible negative terme $\al\tau_1(\beta_3-\beta_2)$. In fact we can factorize by $\alpha\beta_2$: $\alpha\beta_2 \left(  \lb_P(\alpha) - \tau_1\right)\geq 0$ if $\lb_P(\al)\geq\tau_1$. In the other case $\lb_P(\al)\leq\tau_1$, we use the relation \eqref{eq:lpa} to get
\begin{align*}
2\al(\beta_2+\beta_3)\lb_P(\alpha)+\al\tau_1(\beta_3-\beta_2)+\al^2\beta_2\beta_3 
&\geq \frac\al{\lb_P(\alpha)}\left( \tau_1 \tau_2 \beta_2 \beta_3 + 2 \alpha \tau_1 \beta_2 \beta_3 \right) - \alpha^2 \beta_2 \beta_3 \\
& \geq  \frac{\al^2}{\lb_P(\alpha)}\beta_2 \beta_3\left( 2 \tau_1 - \lb_P(\alpha)\right) \geq 0\, . 
\end{align*}
In conclusion, $\lb_P''(\al)$ has the same sign as $\tau_1-\lb_P(\al)$ so $\lb_P(\al)$ can be a local minimum only if $\lb_P(\al)\leq\tau_1$
and a local maximum only if $\lb_P(\al)\geq\tau_1.$
The alternative announced in the proposition follows.
\end{proof}

Next we consider a periodic control $\al(t)$ with period $\theta>0$.
There exists a Floquet eigenvalue $\lb_F(\al)$ and periodic eigenvectors $e_\alpha(t)$, $\phi_\alpha(t)$ such that
\[
\left\{\begin{array}{l}
\dis\f{d}{dt}e_\alpha(t) + \lb_F(\alpha)e_\alpha(t)=(G+\al(t)F)e_\alpha(t)\,,
\medskip\\
\dis\f{d}{dt}\phi_\alpha(t) + \lb_F(\alpha)\phi_\alpha(t)=\phi_\alpha(t)(G+\al(t)F)\,.      
\end{array}\right.
\]
These eigenfunctions are unique after normalization,
\[ \dfrac1\theta \int_0^\theta \la m, e_\alpha(t) \ra \,dt=1\, ,\quad \dfrac1\theta\int_0^\theta\la \phi_\alpha(t)e_\alpha(t)\ra\,dt=1\, .\]
Again we obtain the following expansion for the payoff,
\[r_\al(T,x)=\lb_F(\alpha)+\frac{1}{T}\Big(  \log\langle \phi_\alpha(0), x \rangle+\log\langle m,e_\alpha(T)\rangle\Big)+o\left(\frac1T\right)\,. \]

A natural problem is to find periodic controls $\alpha(t)$ such that the Floquet eigenvalue is better than the optimal Perron eigenvalue $\lambda_P(\alpha^*)$. The following Proposition gives a partial answer to this question.
We consider small periodic perturbations of the best constant control: $\alpha(t) = \al^* + \epsilon \gamma(t)$, where $\gamma$ is a given $\theta$-periodic function.
%
For the sake of clarity we introduce the following notation for the time average over a period,
\[\langle f\rangle_\theta=\f1\theta\int_0^\theta f(t)\,dt\, .\]
We assume that the matrix $G + \alpha^* F$ is diagonalizable (in $\mathbb{R}$). This is the case for the running example (see Appendix~\ref{ap:diag}). We  denote by $(e_1^*,e_2^*,e_3^*)$ and $(\phi_1^*,\phi_2^*,\phi_3^*)$ the bases of right- and left- eigenvectors associated to the eigenvalues $\lambda_1^*,\lb_2^*,\lb_3^*$ for the the best constant control $\al^*$.
Notice that according to previous notations we have $e_1^*=e_{\alpha^*}$ and $\lambda_1^*=\lambda_P(\al^*)$.

\begin{proposition}\label{prop:per:firstderiv}
The directional derivative of the dominant eigenvalue vanishes at $\epsilon = 0$:
\begin{equation} \left.\dfrac{d \lb_F(\alpha^* + \epsilon \gamma)}{d\epsilon}\right|_{\epsilon = 0}  = 0\, . \label{eq:1st order}\end{equation}
Hence, $\al^*$ is also a critical point in the class of periodic controls. The second directional derivative of the dominant eigenvalue writes at $\epsilon = 0$: 
\beq\label{eq:secondFloquet}
\left.\dfrac{d^2 \lb_F(\alpha^* + \epsilon \gamma)}{d\epsilon^2}\right|_{\epsilon = 0} = 2\sum_{i=2}^{3}  \langle\gamma_i^2\rangle_\theta\frac{(\phi_1^* F e_i^*)(\phi_i^* F e_1^*)}{ \lambda_1^*-\lambda_i^* }\, ,\eeq
where $\gamma_i(t)$ is the unique $\theta$-periodic function which is solution to the relaxation ODE
\[
\dfrac{\dot{\gamma_i}(t)}{ \lambda_1^* - \lambda_i^* }  +  \gamma_i(t) =  \gamma(t) \, .
\]
\end{proposition}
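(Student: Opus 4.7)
The plan is to use regular perturbation theory in $\epsilon$ for the Floquet eigenvalue problem. I would expand both the periodic eigenvector and the Floquet eigenvalue as formal series
\[
e_{\alpha^* + \epsilon \gamma}(t) = e_1^* + \epsilon\, e^{(1)}(t) + \epsilon^2\, e^{(2)}(t) + O(\epsilon^3), \qquad \lambda_F(\alpha^* + \epsilon \gamma) = \lambda_1^* + \epsilon\, \lambda^{(1)} + \epsilon^2\, \lambda^{(2)} + O(\epsilon^3),
\]
and plug them into the Floquet equation $\frac{d}{dt}e_\alpha + \lambda_F(\alpha) e_\alpha = (G + \alpha(t) F) e_\alpha$. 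Matching like powers of $\epsilon$ produces an ascending cascade of linear ODEs with the same operator $L := (G + \alpha^* F) - \lambda_1^* - \frac{d}{dt}$ acting on $\theta$-periodic functions, with successively more complicated right-hand sides. The solvability of each step is governed by a Fredholm alternative: $L u = f$ admits a $\theta$-periodic solution if and only if $\langle \phi_1^{*T} f\rangle_\theta = 0$, because the kernel of the adjoint of $L$ on periodic functions is spanned by $\phi_1^*$. Routine normalization then determines the scalar ambiguity in the kernel direction $e_1^*$ (using $\langle m, e_\alpha(t)\rangle_\theta = 1$).

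For the first derivative, the $\epsilon^1$ equation reads
\[
L e^{(1)} = \lambda^{(1)} e_1^* - \gamma(t) F e_1^*.
\]
Applying the solvability condition gives $\lambda^{(1)} = \langle \gamma\rangle_\theta\, \phi_1^{*T} F e_1^*$. The key observation is that $\phi_1^{*T} F e_1^* = \lambda_P'(\alpha^*)$ by the classical formula for the derivative of a simple eigenvalue; since $\alpha^*$ maximises $\lambda_P$ on $(0,+\infty)$ (Proposition~\ref{prop:n=3} and Hypothesis~(H2)), this quantity vanishes. Hence \eqref{eq:1st order} holds. In particular, $L e^{(1)} = -\gamma(t) F e_1^*$ can be solved explicitly by decomposing in the eigenbasis: writing $F e_1^* = \sum_{i=1}^3 (\phi_i^{*T} F e_1^*)\, e_i^*$ and $e^{(1)}(t) = \sum_i \beta_i(t) e_i^*$, I get $\dot\beta_i + (\lambda_1^* - \lambda_i^*)\beta_i = \gamma(t)\,\phi_i^{*T} F e_1^*$ for each $i$. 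For $i = 1$ the right-hand side is zero, so $\beta_1$ is any constant (fixed by the normalization at first order). For $i = 2,3$ the spectral gap $\lambda_1^* - \lambda_i^* > 0$ ensures a unique periodic solution, and comparing with the ODE defining $\gamma_i$ in the statement yields $\beta_i(t) = \frac{\phi_i^{*T} F e_1^*}{\lambda_1^* - \lambda_i^*}\, \gamma_i(t)$.

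For the second derivative, the $\epsilon^2$ equation is
\[
L e^{(2)} = \lambda^{(2)} e_1^* - \gamma(t) F e^{(1)}(t),
\]
and the solvability condition gives $\lambda^{(2)} = \langle \gamma(t)\, \phi_1^{*T} F e^{(1)}(t)\rangle_\theta$. Plugging in the explicit $e^{(1)}$ and using that $\phi_1^{*T} F e_1^* = 0$ kills the $i = 1$ contribution (so the undetermined $\beta_1$ is irrelevant), leaving
\[
\lambda^{(2)} = \sum_{i=2}^{3} \frac{(\phi_1^{*T} F e_i^*)(\phi_i^{*T} F e_1^*)}{\lambda_1^* - \lambda_i^*}\,\langle \gamma(t)\,\gamma_i(t)\rangle_\theta.
\]
Finally, the identity $\langle \gamma\, \gamma_i\rangle_\theta = \langle \gamma_i^2\rangle_\theta$ is obtained by multiplying the defining ODE for $\gamma_i$ by $\gamma_i$ and integrating over a period: the $\dot\gamma_i \gamma_i$ term is a perfect derivative and averages to zero. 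Since $\lambda^{(2)} = \tfrac12 \frac{d^2 \lambda_F}{d\epsilon^2}\big|_{\epsilon = 0}$, the factor $2$ in \eqref{eq:secondFloquet} appears and the formula follows.

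The genuinely delicate point is not the computation itself but justifying that the perturbation series makes sense: one must verify that the map $\gamma \mapsto \lambda_F(\alpha^* + \epsilon \gamma)$ is $C^2$ at $\epsilon = 0$. This follows from the simplicity of $\lambda_1^*$ (given by Perron--Frobenius under Hypothesis~(H1)) and the spectral gap $\mu_{\alpha^*} > 0$, which make the period map of the linear system depend analytically on $\epsilon$ and keep its dominant multiplier isolated; standard analytic perturbation of the monodromy matrix (Kato) then guarantees the existence of the expansions above. Diagonalisability of $G + \alpha^* F$ over $\R$, used to have a genuine eigenbasis $(e_i^*,\phi_i^*)$ with biorthogonality, is assumed just before the statement (and checked for the running example in Appendix~\ref{ap:diag}).
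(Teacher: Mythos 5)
Your argument is correct and follows essentially the same route as the paper: your order-by-order Fredholm (solvability) conditions are exactly the paper's ``test against $\phi_1^*$ (resp. $\phi_i^*$) and average over a period'', you use the same eigenbasis decomposition of the first-order corrector leading to the relaxation ODE for $\gamma_i$, the same cancellation $\phi_1^*Fe_1^*=\lambda_P'(\alpha^*)=0$, and the same identity $\la\gamma\gamma_i\ra_\theta=\la\gamma_i^2\ra_\theta$. The only addition is your closing remark justifying the smoothness of $\epsilon\mapsto\lambda_F(\alpha^*+\epsilon\gamma)$ via analytic perturbation of the monodromy matrix, which the paper leaves implicit.
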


The idea of computing directional derivatives has been used in a similar context in~\cite{M2} for optimizing the Perron eigenvalue in a continuous model for cell division. 


Taking $\gamma\equiv1$ in Equation~\eqref{eq:secondFloquet}, we get the second derivative of the Perron eigenvalue at $\alpha^*$,
\beq\label{eq:secondPerron}\dfrac{d^2\lambda_P}{d\al^2}(\al^*)=2\sum_{i=2}^{3}\dfrac{(\phi_1^* F e_i^*)(\phi_i^* F e_1^*)}{\lambda_1^*-\lambda_i^*} \, ,\eeq
which is nonpositive since $\al^*$ is a maximum point. However we cannot conclude directly in the general case  that the quantity \eqref{eq:secondFloquet} is nonpositive 
since we do not know the relative signs of the coefficients $(\phi_1^* F e_i^*)(\phi_i^* F e_1^*)$ in \eqref{eq:secondPerron}.



We study in~\cite{CCGS} a variant of \eqref{eq:dynsyst}, where the control is not affine. We prove that there exist directions $\gamma$ for which $\alpha^*$ is a minimum point. Alternatively speaking, periodic controls can beat the best constant control.
For this, we perturb the constant control with high-frequency modes, and we compute the limit of $\frac{d^2\lb_F}{d\eps^2}$ when the frequency tends to infinity.
Unfortunately, this procedure gives no additional information in the case of an affine control.

\begin{proof}[Proof of Proposition~\ref{prop:per:firstderiv}]
First we derive a formula for the first derivative of the Perron eigenvalue:
By definition we have
\[(G+\al F)e_\alpha=\lambda_P(\alpha)e_\alpha\,.\]
Deriving with respect to $\al$ we get
\[
\frac {d\lambda_P}{d\alpha}(\alpha) e_\alpha+\lambda_P(\alpha)\frac {d e_\alpha}{d\alpha} =Fe_\alpha+(G+\al F)\frac {d e_\alpha}{d\alpha}\, .
\]
Testing against the left- eigenvector $\phi_\alpha$ we obtain
\[\frac {d\lambda_P}{d\alpha}(\alpha)    =  \phi_\alpha Fe_\alpha \, .\]
Second, we write the Floquet eigenvalue problem corresponding to the periodic control $\alpha = \alpha^* + \epsilon\gamma$:
\[\frac{\partial}{\partial t} e_\alpha(t) + \lb_F(\alpha^* + \epsilon\gamma ) e_\alpha(t) = (G+(\al^*+ \epsilon \gamma(t))F) e_\alpha(t)\, .\]
Deriving this ODE with respect to $\epsilon$, we get
\beq\label{eq:firstderiv_periodic} 
\frac{\partial}{\partial t} \dfrac{\partial e_\alpha}{\partial\epsilon}(t) + \dfrac{d \lb_F(\alpha^* + \epsilon\gamma )}{d\epsilon} e_\alpha(t) + \lb_F(\alpha^* + \epsilon\gamma ) \dfrac{\partial e_\alpha}{\partial\epsilon}(t)  =  \gamma(t)F e_\alpha(t) + (G+(\al^*+ \epsilon \gamma(t))F)\dfrac{\partial e_\alpha}{\partial\epsilon}(t)  \, .
\eeq
Testing this equation against $\phi_1^*$ and evaluating at $\epsilon = 0$, we obtain
\[ \frac{\partial}{\partial t} \left( \phi_1^* \left.\dfrac{\partial e_\alpha}{\partial\epsilon}\right|_{\epsilon = 0}(t) \right) + \left.\dfrac{d \lb_F(\alpha^* + \epsilon \gamma)}{d\epsilon}\right|_{\epsilon = 0} = \gamma(t) \phi_1^*F e_1^*\, .\]
After integration over one period, we get
\[ \left.\dfrac{d \lb_F(\alpha^* + \epsilon \gamma)}{d\epsilon}\right|_{\epsilon = 0} = \left(\frac1\theta\int_0^\theta \gamma(t)\,dt\right)\phi_1^*F e_1^*= \la \gamma \ra_\theta \dfrac{d\lambda_P}{d\alpha} (\al^*) = 0\, ,\]
which is the first order condition \eqref{eq:1st order}.

Next, we test~\eqref{eq:firstderiv_periodic} against another left- eigenvector $\phi_i^*$ and we evaluate at $\epsilon = 0$.
We obtain the following equation satisfied by $\gamma_i(t) = (\lambda_1^* - \lambda_i^*)\phi_i^* \dfrac{\partial e_\alpha}{\partial\epsilon}(t)(\phi_i^* Fe_1^*)^{-1}$:
\begin{equation} 
\frac1{\lambda_1^* - \lambda_i ^*} \frac \partial {\partial t}{\gamma_i}(t)  +  \gamma_i(t) =  \gamma(t)  \, .
\label{eq:gamma_i}
\end{equation}
We differentiate~\eqref{eq:firstderiv_periodic} with respect to $\epsilon$. This yields
\begin{multline*}
\frac{\partial}{\partial t} \dfrac{\partial^2 e_\alpha}{\partial\epsilon^2}(t) + \dfrac{d^2 \lb_F(\alpha^* + \epsilon\gamma )}{d\epsilon^2} e_\alpha(t) + 2 \dfrac{d \lb_F(\alpha^* + \epsilon\gamma )}{d\epsilon} \dfrac{\partial  e_\alpha}{\partial\epsilon }(t)   +  \lb_F(\alpha^* + \epsilon\gamma ) \dfrac{\partial^2 e_\alpha}{\partial\epsilon^2}(t) 
\\ = 2 \gamma(t)F \dfrac{\partial  e_\alpha}{\partial\epsilon }(t) + (G+(\al^*+ \epsilon \gamma(t))F)\dfrac{\partial^2 e_\alpha}{\partial\epsilon^2}(t)  \, .
\end{multline*}
Testing this equation against $\phi_1^*$ and evaluating at $\epsilon = 0$, we find
\begin{equation}
\frac{\partial}{\partial t} \left( \phi_1^* \left.\dfrac{\partial^2 e_\alpha}{\partial\epsilon^2}\right|_{\epsilon = 0}(t) \right) +  \left.\dfrac{d^2 \lb_F(\alpha^* + \epsilon \gamma)}{d\epsilon^2}\right|_{\epsilon = 0}    = 2\gamma(t) \phi_1^* F   \left.\dfrac{\partial e_\alpha}{\partial\epsilon}\right|_{\epsilon = 0}(t)  \, . 
\label{eq:extremal Floquet}
\end{equation}
We decompose the unknown $\frac{\partial e_\alpha}{\partial\epsilon}(t)$ along the basis $(e_1^*,e_2^*,e_3^*)$
\[  \frac{\partial e_\alpha}{\partial\epsilon} (t) = \sum_{i = 1}^3 \gamma_i(t)\frac{(\phi_i^* Fe_1^*)}{\lambda_1^* - \lambda_i^*}  e_i^*  \, .\]
In particular, we have
\[ \phi_1^* F    \dfrac{\partial e_\alpha}{\partial\epsilon} (t) = \sum_{i=2}^{3}\gamma_i(t)\frac{(\phi_i^* F e_1^*)}{\lambda_1^* - \lambda_i^*}(\phi_1^* F e_i^*)\, ,   \]
since $\phi_1^* Fe_1^* = 0$ by optimality.
To conclude, we integrate \eqref{eq:extremal Floquet} over one period,
\begin{equation*}
\left.\dfrac{d^2 \lb_F(\alpha^* + \epsilon \gamma)}{d\epsilon^2}\right|_{\epsilon = 0}  =
 2 \sum_{i=2}^{3}\langle\gamma\gamma_i\rangle_\theta \dfrac{(\phi_i^* F e_1^*)( \phi_1^* F e_i^*)}{\lambda_1^* - \lambda_i^*} \,.
\end{equation*}
We conclude thanks to the following identity derived from \eqref{eq:gamma_i}: $\la \gamma_i ^2\ra_\theta = \langle\gamma\gamma_i\rangle_\theta$.
\end{proof}

\section{Optimal control and the ergodic problem}\label{sec:HJB}

Let $a,A$ be some given bounds on the control parameter $\alpha$, with $a<\alpha^*< A$. 
Let $T>0$ be a (large) time. The optimal control problem associated to \eqref{eq:dynsyst} reads as follows
\begin{equation*}
V(T,x) = \sup_\alpha \left\{ \la m, x_\alpha(T)\ra \;:\;\dot x_\alpha(t) = (G + \alpha(t) F) x_\alpha(t)\;,\; x_\alpha(0) = x  \right\}\, ,
\end{equation*}
where the supremum is taken over all measurable control functions $\alpha: (0,T)\to [a,A]$. The principle of dynamic programming enables to solve this optimal control problem by introducing the value function $v(s,x)=\sup \left\{ \la m, x_\alpha(T)\ra \;:\;\dot x_\alpha(t) = (G + \alpha(t) F) x_\alpha(t)\;,\; x_\alpha(s) = x  \right\}$. We have $v(T,x) = \la m, x \ra$, $v(0,x) = V(T,x)$, and $v$ satisfies the following Hamilton-Jacobi-Bellman equation in the viscosity sense,
\[ \frac{\partial}{\partial s} v (s,x) + \widetilde H(x, D_x v(s,x)) = 0 \, ,
 \]
where the Hamiltonian is given by $ \widetilde H(x,p) = \max_{\alpha\in [a,A]} \left\{ \la(G + \alpha F)x , p \ra  \right\}$.
This equation is backward in time: the initial data is prescribed at the final time $s=T$. 

Intuitively we expect an exponential growth of the reward $\la m, x_\alpha(T)\ra$ as $T\to +\infty$. This motivates the following reduction to a compact space/linear growth.

\subsection{Reduction to a compact space/linear growth}


We perform a logarithmic change of variable: $w(s,x) = \log v(s,x) - \log \la  \m , x\ra$. The reward function satisfies:
\[ \dfrac d{dt} \log \la  \m , x_\alpha(t)\ra = \dfrac{ \la  \m , \dot x_\alpha(t)\ra}{  \la  \m , x_\alpha(t)\ra} = \la  \m  , (G + \alpha(t) F) y_\alpha(t)\ra = \la  \m  , G y_\alpha(t)\ra\,,  \]
where $y = \frac{x}{\la  \m , x\ra}$ denotes the projection on the simplex $\mathcal S = \{ y\geq 0\;:\; \la m,y \ra = 1 \}$. We can write a close equation for the projected trajectory $y_\alpha(t)$ due to the linearity of the system:
\begin{align*}
\dot y_\alpha(t) & = \frac{\dot x_\alpha(t)}{\la  \m , x_\alpha(t)\ra} - \frac{x_\alpha(t)}{\la  \m , x_\alpha(t)\ra}\frac{\la  \m , \dot x_\alpha(t)\ra}{\la  \m , x_\alpha(t)\ra} \nonumber\\
&= (G + \alpha(t) F) y_\alpha(t) - \la  \m , (G + \alpha(t) F) y_\alpha(t)\ra y_\alpha(t) \nonumber\\
&= (G + \alpha(t) F) y_\alpha(t) - \la  \m ,  G y_\alpha(t)\ra y_\alpha(t)\, ,
\end{align*}
(recall $m^T F = 0$). We introduce the following notation for the vector field on the simplex:
\[ b(y,\alpha) = (G + \alpha F) y - \la  \m ,  G y\ra y\, . \]
It is worth mentioning that the only stationary point of the vector field $b(y,\alpha)$ on the simplex $\mS$ is the dominant eigenvector $e_\alpha$: $b(e_\alpha,\alpha) = 0$. Furthermore, for any $y \in \mS$ the trajectory starting from $y$ with constant control $\alpha(t)\equiv \alpha$ converges to $e_\alpha$ as $t \to +\infty$, and leaves the simplex $\mS$ as $t \to -\infty$.
Moreover the vector field $b(y,\alpha)$ can be easily computed on each eigenvector $y = e_\beta$ due to the affine structure of the problem: 
\begin{align*}
b(e_\beta,\alpha) & = (G + \alpha F) e_\beta - \la  \m ,  G e_\beta\ra e_\beta \\
& = ( \alpha - \beta ) Fe_\beta + b(e_\beta, \beta) \\
& = ( \alpha - \beta ) Fe_\beta\, .
\end{align*}

The logarithmic value function $w$ satisfies the following optimization problem for $x\in \mS$:
\[ w(s,x) = \sup_\alpha\left\{ \log \la  \m , x_\alpha(T)\ra  - \log \la  \m , x\ra \right\} = \sup_\alpha \left\{ \int_{s}^T \la  \m  , G y_\alpha(t)\ra\, dt\;:\; \dot y_\alpha(t) = b(y_\alpha(t),\alpha(t))\; , \; y_\alpha(s) = x \right\}  \]
Finally for notational convenience, we perform the time reversal $t = T-s$, and introduce $u(t,y) = w(T-t,y)$ for $y\in \mS$. The optimal problem is now reduced to a compact space (the simplex $\mathcal S$), with a running reward: 
\[ u(t,y) = \sup_\alpha \left\{ \int_{0}^t L(y_\alpha(s))\, ds  \;:\; \dot y_\alpha(s) = b(y_\alpha(s),\alpha(s))\; , \; y_\alpha(0) = y\right\}\, . \]
where the reward function is linear: $L(y) = \la  \m  , G y\ra$.
The function $-u$ is a viscosity solution of the following Hamilton-Jacobi equation:
\begin{equation} \label{eq:HJB} 
\left\{\begin{array}{l}
\dfrac{\partial}{\partial t} (-  u(t,y)) + H(y,D_y u(t,y)) = 0\, , \medskip \\
u(0,y) = 0\, ,
\end{array}\right.
\end{equation}
where the Hamiltonian is given by 
\begin{equation} \label{eq:Hamiltonian} H(y,p) =  \max_{\alpha\in [a,A]} \left\{ \la b(y,\alpha), p \ra + \la  \m , G y\ra \right\}\, .\end{equation}

\begin{remark}\label{rem:not coercive}
An important observation is that this hamiltonian does not satisfy the classical coercivity assumption $H(\cdot,p) \to +\infty$ as $|p|\to +\infty$. Indeed for all $y\in \mS$ there exists a cone $C(y)$ such that $H(y,p)\to -\infty$ if $p\in C(y)$ and $|p|\to +\infty$. In addition we have $C(y) = \emptyset$ if and only if $y = e_\beta$ for some $\beta\geq 0$. In the latter case we have 
\[ H(e_\beta,p) = \max_{\alpha\in [a,A]}\{ \la(\alpha - \beta) Fe_\beta , p \ra \} + L(e_\beta) = (A - \beta) \la  Fe_\beta , p \ra_+ + (\beta- a) \la  Fe_\beta , p \ra_- + \lambda_P(\beta)\, . \]
It is not coercive either.
\end{remark}

Due to the lack of coercivity the ergodic problem is difficult to handle with. We follow the procedure described in \cite{Arisawa1,Arisawa2} to exhibit an ergodic set in the simplex. This set has to fulfill two important features: controllability, and attractivity. We construct below such an ergodic set, and we prove the two required properties.

\subsection{Notations and statement of the result}

\label{ssec:result}

Following \cite{Arisawa2} we introduce the auxiliary problem with infinite horizon:
\begin{equation} \label{eq:infinite horizon}
u_\eps(y) = \sup_\alpha \left\{ \int_{0}^{+\infty} e^{-\eps t} L(y_\alpha(t))\, dt  \;:\; \dot y_\alpha(t) = b(y_\alpha(t),\alpha(t))\; , \; y_\alpha(0) = y\right\}\, .
\end{equation}
The limit of the quantity $\eps u_\eps(y)$ as $\eps\to 0$ measures the convergence of the reward $L(y(t))$ as $t\to +\infty$ in average (convergence {\em \`a la C\'esaro}). The function $-u$ is a viscosity solution of the following stationary Hamilton-Jacobi equation: 
\begin{equation} \label{eq:HJ stat}
- \eps u_\eps(y) + H(y,D_y u_\eps(y)) = 0\, .
\end{equation}
The next Theorem claims that the function $\eps u_\eps(y)$ converges to a constant.
We interpret this constant as the (generalized) eigenvalue associated to the optimal control problem.

We introduce some notations necessary for the statement of the two technical hypotheses {\bf (H4-5)}:
\begin{itemize}
\item $\mS$: the simplex $\{ y\geq 0\;:\; \la m,y \ra = 1 \}$,
\item $\Theta$: the (direct) orthogonal rotation with angle $\pi/2$ on the tangent space $T\mS$,
\item $\Phi_0 = \{ e_\alpha\;|\; \alpha\in \RR_+ \}\subset\mS$: the set of Perron eigenvectors,
\end{itemize}
\begin{description}
\item[(H4)] We assume a first technical condition:
\begin{equation}\label{as:tech}
\la \frac{d e_\al}{d\al}  ,  \Theta F e_\alpha  \ra\quad\text{has a constant sign for $\alpha \geq 0$.}
\end{equation}
\end{description}
This hypothesis enables to determine the direction of the vector field $b(y,\alpha)$ accross $\Phi_0$ at $y = e_\beta$: 
\begin{equation} 
\la \Theta \left.\frac{d e_\al}{d\al}\right|_{\alpha = \beta}  ,  b(e_\beta,\alpha) \ra = -( \alpha - \beta ) \la \left.\frac{d e_\al}{d\al}\right|_{\alpha = \beta}  ,  \Theta F e_\beta \ra \, . 
\label{eq:sign accross}
\end{equation}
Due to Hypothesis {\bf (H4)}, only the sign of $\alpha - \beta$ determines the sign of the above quantity. 

We assume without loss of generality that the quantity \eqref{as:tech} is negative for all $\alpha\geq 0$. This has been checked numerically in the case of the running example \eqref{eq:example} (see Appendix~\ref{ap:tech}). 
\begin{description}
\item[(H5)] We assume a second technical condition: there exists $\delta_0>0$ such that for any constant control $\beta\in (A-\delta_0, A + \delta_0)$, the trajectory starting from $e_0$ with constant control $\beta$, which connects $e_0$ and $e_\beta$, does not cross the curve $\Phi_0$ for $t>0$. Similarly, for any constant control $\beta\in (a-\delta_0, a + \delta_0)$, the trajectory starting from $e_\infty$ with constant control $\beta$, which connects $e_\infty$ and $e_\beta$, does not cross the curve $\Phi_0$.
\end{description}
A proper construction of some remarkable sets in the proof of the following Theorem relies on these two technical assumptions, {\em e.g.} $\mZ_0,\mT_\pm$ (Figures \ref{fig:eigenvector} and \ref{fig:tunneling}).

\begin{theorem}[Ergodicity] \label{th:eigHJ}
Assume that hypotheses {\bf (H1-2-3-4-5)} are satisfied. 
There exists a constant $\lambda_{HJ}$ such that the following uniform convergence holds true:
\[ \lim_{\eps\to 0}\eps u_\eps(y) = \lambda_{HJ}\, , \quad \text{uniformly for $y\in \mS$}\, . \] 
\end{theorem}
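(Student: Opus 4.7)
The plan is to apply the abstract ergodicity criterion of Arisawa~\cite{Arisawa1,Arisawa2}, adapted to our non-coercive Hamiltonian \eqref{eq:Hamiltonian}. In this framework, uniform convergence of $\eps u_\eps$ to a single constant follows once one exhibits a compact subset $\mZ \subset \mS$ which is (a) \emph{attracting}, in the sense that every projected trajectory of \eqref{eq:dynsyst} can be steered into $\mZ$ in a time bounded uniformly in the starting point, and (b) \emph{controllable}, in the sense that any two points of $\mZ$ can be joined by an admissible trajectory in a time bounded independently of the endpoints. Once (a)--(b) are in hand, the dynamic programming principle will give $\mathrm{osc}_{\mS}(\eps u_\eps)\to 0$, so that $\lambda^+:=\limsup_{\eps\to 0}\sup_{\mS}\eps u_\eps$ and $\lambda^-:=\liminf_{\eps\to 0}\inf_{\mS}\eps u_\eps$ coincide; their common value is then $\lambda_{HJ}$.

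First I would establish a priori bounds. Since $L(y)=\la m,Gy\ra$ is bounded on $\mS$, one has $\|\eps u_\eps\|_\infty\le \|L\|_\infty$; plugging the constant control $\alpha(t)\equiv\alpha^*$ into \eqref{eq:infinite horizon} together with \eqref{eq:perron taylor} yields $\liminf_{\eps\to 0}\eps u_\eps(y)\ge\lambda_P(\alpha^*)$ for every $y\in\mS$. Thus $\lambda^\pm$ are finite with $\lambda^-\le\lambda^+$, and the task reduces to the reverse inequality.

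Next I would construct $\mZ$ around the Perron curve $\Phi_0$. Hypothesis~{\bf (H1)} provides a uniform spectral gap $\mu_\alpha>0$ on compact intervals, so any constant-control flow on $\mS$ relaxes exponentially to its Perron eigenvector; this delivers attractivity in the regular part of $\Phi_0$. To absorb the degenerate endpoints $e_0$ and $e_\infty$ (which exist by {\bf (H2)}--{\bf (H3)}) I would thicken $\Phi_0$ into a tubular neighborhood $\mZ_0$ and adjoin two tunneling corridors $\mT_\pm$, well-defined thanks to~{\bf (H5)}, which reroute trajectories that have drifted near $e_0$ or $e_\infty$ back onto the regular part. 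For controllability within $\mZ$ I would exploit~{\bf (H4)} and identity \eqref{eq:sign accross}: at each $e_\beta\in\Phi_0$ the choices $\alpha>\beta$ and $\alpha<\beta$ drive the projected flow transversally on opposite sides of $\Phi_0$, and the strict inclusion $a<\alpha^*<A$ ensures that both transverse moves are always admissible. Combining such transversal pushes with tangential drift along $\Phi_0$ (obtained by switching the base control and letting the trajectory relax to a nearby eigenvector), and linking through $\mT_\pm$ via~{\bf (H5)}, one gets a uniform bound $T_0$ on the time needed to connect any two points of $\mZ$.

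Finally I would conclude in the standard Arisawa manner. Given $y,y'\in\mS$, take a control driving $y$ into $\mZ$, then through $\mZ$ to $y'$ in total time $T(y,y')\le T_1$, and concatenate with a near-optimal control for $u_\eps(y')$; the dynamic programming inequality then gives $u_\eps(y)\ge -\|L\|_\infty T_1+e^{-\eps T_1}u_\eps(y')$. Combined with the uniform bound $\|\eps u_\eps\|_\infty\le\|L\|_\infty$, this yields $|u_\eps(y)-u_\eps(y')|\le C$ independent of $\eps$, whence $\mathrm{osc}_{\mS}(\eps u_\eps)=O(\eps)\to 0$ and $\lambda^+=\lambda^-$, which is exactly the uniform convergence claimed.

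The main obstacle will be the controllability step at the degenerate endpoints. At $e_\infty$ one has $Fe_\infty=0$ by construction, so infinitesimal variations of $\alpha$ produce no first-order motion; an analogous degeneracy occurs at $e_0$ since $Ge_0=\lambda_P(0)e_0$ with $\lambda_P(0)=0$ in the running example. The transversality argument based on~{\bf (H4)} breaks down there, and it is precisely to bypass this that Hypothesis~{\bf (H5)} posits tunneling trajectories that loop around $e_0$ and $e_\infty$ without crossing $\Phi_0$. Assembling $\mZ_0\cup\mT_+\cup\mT_-$ into a closed set which is simultaneously attracting and controllable is the geometric heart of the argument.
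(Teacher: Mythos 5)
Your overall strategy---Arisawa's ergodic-attractor framework, the a priori bound $\|\eps u_\eps\|_\infty\le\|L\|_\infty$, and the lower bound $\liminf_{\eps\to0}\eps u_\eps\ge\lambda_P(\alpha^*)$ obtained with the constant control $\alpha^*$---is indeed the route the paper takes. But there is a genuine gap at the heart of your sketch: you postulate a single compact set $\mZ$ which is simultaneously attracting in uniformly bounded time and exactly controllable in uniformly bounded time, and no such set exists for this system. The shrunk set $\mZ_{-\delta}$ on which the paper proves uniform exact controllability (Lemma~\ref{lem:controllability}) is not attracting at all: the trajectory with constant control $a$ converges to $e_a\notin\mZ_{-\delta}$ and never enters it; conversely, any set large enough to attract all trajectories (the paper's $\mZ_{+2\delta}$) contains the rest points $e_a,e_A$ of the extremal dynamics near its boundary, where the minimal connection time degenerates. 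Arbitrary trajectories can stall forever in the band $\Delta_\delta=\mZ_{+2\delta}\setminus\mZ_{-\delta}$, so the uniform bound ``$T(y,y')\le T_1$'' you feed into the dynamic programming inequality is false. Bridging this mismatch is precisely the new ingredient of the paper's proof (Step~6): close-to-optimal trajectories cannot linger in $\Delta_\delta$, because there the running reward is essentially at most $\max(\lambda_P(a),\lambda_P(A))<\lambda_P(\alpha^*)$ (this is where $a<\alpha^*<A$ in \textbf{(H2)} enters quantitatively), and their entry time $T_1(\eps,\delta)$ into $\mZ_{-\delta}$ satisfies only $\eps T_1=O(\eta)$, not a uniform bound; accordingly the oscillation of $\eps u_\eps$ comes out as $O(\eta)$ with $\eta$ arbitrary, not $O(\eps)$ as you claim. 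Note also that your concatenation ``drive $y$ into $\mZ$, then through $\mZ$ to $y'$'' cannot work for arbitrary $y'\in\mS$: the ergodic set is forward invariant, so points upstream of it are unreachable; the correct comparison steers $y$ and the near-optimal trajectory issued from $y'$ into $\mZ_{-\delta}$ and connects the two entry points there.

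Two further points. First, the set you propose---a tubular neighbourhood of $\Phi_0$ with the tunnels $\mT_\pm$ adjoined---cannot be controllable: near $e_\infty$ one has $Fe_\infty=0$, so the control has no authority while the drift $Ge_\infty\neq0$ expels the trajectory irreversibly, and passage through the tunnels is one-way by the monotonicity formulas \eqref{eq:betaA}--\eqref{eq:betaa}, so points far up a tunnel are unreachable from the core. In the paper the tunnels serve only to prove attractiveness; the controllable core is the lens-shaped region enclosed by the two bang trajectories joining $e_a$ and $e_A$, and \textbf{(H5)} is used to guarantee that these two curves stay on opposite sides of $\Phi_0$ (ruling out spiraling), not to provide loops around $e_0$ and $e_\infty$. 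Second, even once $\mathrm{osc}_{\mS}(\eps u_\eps)\to0$ is established, this only shows convergence to a constant along subsequences; the uniqueness of $\lambda_{HJ}$ still requires the comparison-principle argument for the stationary equation \eqref{eq:HJ stat} (the paper's Step~7), which your sketch omits.
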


The constant $\lambda_{HJ}$ is interpreted as the eigenvalue associated to the Hamiltonian $H(y,p)$. We are not able to prove the existence of an eigenvector here. Indeed we lack an equicontinuity estimate on  the family $(u_\eps)_\eps$. We only obtain equicontinuity of the family $(\eps u_\eps)_\eps$.
However we postulate such an eigenvector does exist, based on numerical evidence (see Section \ref{sec:num}). 

\begin{corollary}\label{cor:ergodic}
Assume that hypotheses {\bf (H1-2-3-4-5)} are satisfied. Then the solution of the Hamilton-Jacobi-Bellman equation satisfies the following ergodic property:
\begin{equation*}
\lim_{T\to +\infty} \dfrac{u(T,y)}{T} = \lambda_{HJ}\, , \quad \text{uniformly for $y\in \mS$}\,.
\end{equation*}
\end{corollary}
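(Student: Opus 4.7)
The plan is to transfer the ergodic convergence of $\eps u_\eps$ from Theorem~\ref{th:eigHJ} to the rescaled finite-horizon value $u(T,\cdot)/T$ by a sub/super-solution comparison argument for the evolutionary HJB equation~\eqref{eq:HJB}.

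Fix $\delta>0$. By Theorem~\ref{th:eigHJ} there exists $\eps=\eps(\delta)>0$ such that $\lambda_{HJ}-\delta \le \eps u_\eps(y) \le \lambda_{HJ}+\delta$ uniformly on $\mS$. I would then introduce the two affine-in-time barriers
\[
\bar w(t,y) := u_\eps(y) - \min_\mS u_\eps + (\lambda_{HJ}+\delta)\,t,\qquad \underline w(t,y) := u_\eps(y) - \max_\mS u_\eps + (\lambda_{HJ}-\delta)\,t,
\]
and check, in the viscosity sense appropriate to the paper's convention that $-u$ is the genuine solution of~\eqref{eq:HJB}, that $\bar w$ is a super-solution and $\underline w$ is a sub-solution of~\eqref{eq:HJB}. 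The motivation is transparent at a point of differentiability of $u_\eps$: the stationary equation~\eqref{eq:HJ stat} gives $H(y,Du_\eps) = \eps u_\eps$, hence
\[
\partial_t\bar w - H(y,D_y\bar w) = (\lambda_{HJ}+\delta) - \eps u_\eps \ge 0,\qquad \partial_t\underline w - H(y,D_y\underline w) = (\lambda_{HJ}-\delta) - \eps u_\eps \le 0.
\]
At a non-smooth point of $u_\eps$, the corresponding viscosity sub- or super-solution property of $u_\eps$ for~\eqref{eq:HJ stat}, applied to the shifted test function $\Phi(y) := \psi(t_0,y) \mp (\lambda_{HJ}\pm\delta)t_0 + (\min/\max) u_\eps$, combined with the uniform bound $|\eps u_\eps - \lambda_{HJ}| \le \delta$, supplies the required evolutionary inequality. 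By construction, $\underline w(0,\cdot) \le 0 = u(0,\cdot) \le \bar w(0,\cdot)$.

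A state-constrained comparison principle on the compact invariant simplex $\mS$ (the field $b(y,\alpha)$ points inward on $\partial\mS$ since $G$ and $F$ have nonnegative off-diagonal entries, so the framework of Soner and Bardi--Capuzzo-Dolcetta~\cite{Bardi-Capuzzo} applies) then yields
\[
u_\eps(y) - \max_\mS u_\eps + (\lambda_{HJ}-\delta)\,t \;\le\; u(t,y) \;\le\; u_\eps(y) - \min_\mS u_\eps + (\lambda_{HJ}+\delta)\,t
\]
for every $(t,y)\in\RR_+\times\mS$. For the fixed $\eps=\eps(\delta)$, the function $u_\eps$ is continuous on the compact simplex and hence bounded, so dividing by $t$ and letting $t\to+\infty$ produces
\[
\lambda_{HJ}-\delta \;\le\; \liminf_{t\to+\infty}\frac{u(t,y)}{t} \;\le\; \limsup_{t\to+\infty}\frac{u(t,y)}{t} \;\le\; \lambda_{HJ}+\delta
\]
uniformly in $y\in\mS$. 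Sending $\delta\to 0$ concludes.

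The main technical point I expect is the rigorous state-constrained viscosity setup on $\mS$ together with the verification of the super-/sub-solution property of $\bar w,\underline w$ at non-smooth points of $u_\eps$. The non-coercivity of the Hamiltonian and the absence of exterior Dirichlet data on $\partial\mS$ mean that the comparison principle must be invoked in its state-constrained form; however, the inward-pointing character of $b(y,\alpha)$ on $\partial\mS$ already established in Section~\ref{sec:HJB} brings the problem within the scope of classical state-constrained viscosity theory, after which the argument above is a routine calculation.
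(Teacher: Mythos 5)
Your strategy (turn the uniform convergence $\eps u_\eps\to\lambda_{HJ}$ into the two barriers $u_\eps-\max_\mS u_\eps+(\lambda_{HJ}-\delta)t$ and $u_\eps-\min_\mS u_\eps+(\lambda_{HJ}+\delta)t$, then compare with $u$ and divide by $t$) is a legitimate route and genuinely different from the paper's: the paper follows Arisawa and never invokes a comparison principle, instead splitting the discounted reward at the horizon $T=\delta/\eps$ via the dynamic programming principle, bounding the error term by $\|L\|_\infty(e^{-\delta}-1+\delta)$, and using only the uniform convergence of $\eps u_\eps$ together with $\|L\|_\infty<\infty$. However, as written your proof has a genuine gap at its key step, the comparison on $\mS$. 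You justify the state-constrained comparison by saying that $b(y,\alpha)$ points inward on $\partial\mS$, but the vector fields are only \emph{non-outward} there, and the uniform inward-pointing (Soner-type) condition required by the classical state-constraint comparison theorems fails: by {\bf (H2)}--{\bf (H3)} the point $e_\infty$ lies on $\partial\mS$ and $Fe_\infty=0$, so $b(e_\infty,\alpha)=Ge_\infty-\la m,Ge_\infty\ra e_\infty$ is the \emph{same} vector for every $\alpha\in[a,A]$; in the running example it equals $\tau_1(-2\;1\;0)^T$, which is tangent to the face $\{y_3=0\}$, and along that face the inward normal speed $\tau_2y_2$ degenerates as $y\to e_\infty$. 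Hence the off-the-shelf framework you cite does not directly apply at and near $e_\infty$, and the "routine calculation" conclusion is not yet earned. A secondary (minor) implicit ingredient is the continuity of $u_\eps$ for fixed $\eps$, which the paper never establishes (it only controls $\eps u_\eps$); it is standard for discounted problems, but you should say so since your viscosity/comparison setup needs it, whereas only boundedness $\|u_\eps\|_\infty\le\|L\|_\infty/\eps$ is free.

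The gap is repairable without proving any boundary comparison: replace the PDE comparison by a trajectory version of the same barrier argument, using only the dynamic programming principle for $u_\eps$ and the forward invariance of $\mS$ under every admissible control. For the upper bound, apply the DPP inequality on a fine grid $t_k=kh$ along an arbitrary trajectory,
\begin{equation*}
u_\eps(y_\alpha(t_k))\;\ge\;\int_{t_k}^{t_{k+1}}e^{-\eps(r-t_k)}L(y_\alpha(r))\,dr+e^{-\eps h}u_\eps(y_\alpha(t_{k+1}))\,,
\end{equation*}
expand $e^{-\eps h}=1-\eps h+O(\eps^2h^2)$ (recall $\|u_\eps\|_\infty\le\|L\|_\infty/\eps$), telescope and let $h\to0$ to get
\begin{equation*}
\int_0^t L(y_\alpha(r))\,dr\;\le\;u_\eps(y)-u_\eps(y_\alpha(t))+\int_0^t\eps u_\eps(y_\alpha(r))\,dr\;\le\;u_\eps(y)-\min_\mS u_\eps+(\lambda_{HJ}+\delta)\,t\,,
\end{equation*}
and take the supremum over controls; the lower bound is obtained symmetrically by concatenating $\sigma h$-optimal controls on each subinterval. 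This yields exactly your two-sided estimate, after which dividing by $t$ and letting $\delta\to0$ concludes as you wrote. With that substitution (or with a proof of comparison under a weakened boundary condition), your argument is complete; note that the paper's own one-step splitting at $T=\delta/\eps$ achieves the same goal more economically.
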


Back to the original problem, Corollary \ref{cor:ergodic} translates into \eqref{eq:ergodic}, where convergence is uniform on compact subsets of $\left(\RR_+\right)^3\setminus\{0\}$. 


\subsection{Proof of Theorem \ref{th:eigHJ}}

We mainly follow~\cite{Arisawa2} with some refinements specific to our context. We proceed in several steps, with illustrative figures to facilitate the argumentation.

The program is the following: we identify the so-called ergodic set $\mZ_0\subset\mS$~\cite{Arisawa1,Arisawa2}, and we show some of its interesting properties. We begin with the controllability problem. Controllability enables to prove convergence to the constant $\lambda_{HJ}$ when $y$ belongs to the ergodic set $\mZ_0$. Next we demonstrate the attractiveness of the ergodic set $\mZ_0$. This enables to extend the convergence to every $y\in \mathcal S$. 


There is a technical subtlety concerning the time required for controllability/attractiveness. This time could degenerate as we get close to the boundary of the ergodic set. We circumvent this issue by proving that close-to-optimal trajectories do not stay in the vicinity of the boundary for long time.

\paragraph{Step 1- Identification of some remarkable sets: the set of eigenvectors, and the ergodic set.} 
The first remarkable subset of the simplex $\mathcal S$ is the set of eigenvectors $\Phi_0 = \{ e_\alpha\;|\; \alpha\in \RR_+ \}$. It is alternatively defined as the zero level set of the cubic function 
\[\varphi(y) = \la b(y,\alpha), \Theta Fy\ra = \la Gy - \la  \m , Gy\ra y , \Theta Fy  \ra\, .\] 
Hence we have,
\[ \Phi_0 = \{ e_\alpha\;|\; \alpha\in \RR_+ \} = \{  y\in \mS \;:\;  \varphi(y) = 0\}\, . \]
Indeed, $y\in \mS$ is an eigenvector if and only if there exists $\alpha$ such that $(G + \alpha F) y - \la  \m ,  G y\ra y = 0$, {\em i.e.} $  Gy - \la  \m ,  G y\ra y \in \spa(F y)$, or equivalently $  Gy - \la  \m ,  G y\ra y \perp \Theta F y $.
From Hypothesis {\bf (H3)} we deduce that $\Phi_0$ is a curve connecting two boundary points of the simplex $\mS$. In Figure \ref{fig:eigenvector} we have plotted the set of eigenvectors in the case of the running example.

The set of eigenvector splits the simplex $\mS$ into two subsets, denoted by $\Phi_+$ and $\Phi_-$, respectively:
\begin{equation*}
\Phi_+ = \{y\in \mS \;:\; \varphi(y) \geq 0 \}\quad, \quad \Phi_- = \{ y\in \mS \;:\; \varphi(y) \leq 0 \}\, .
\end{equation*}
Notice that the sign of $\varphi$ depends on the orientation of the rotation $\Theta$. Obviously the following discussion does not depend on this convention.

\begin{figure}
\begin{center}
\includegraphics[width = 0.46\linewidth]{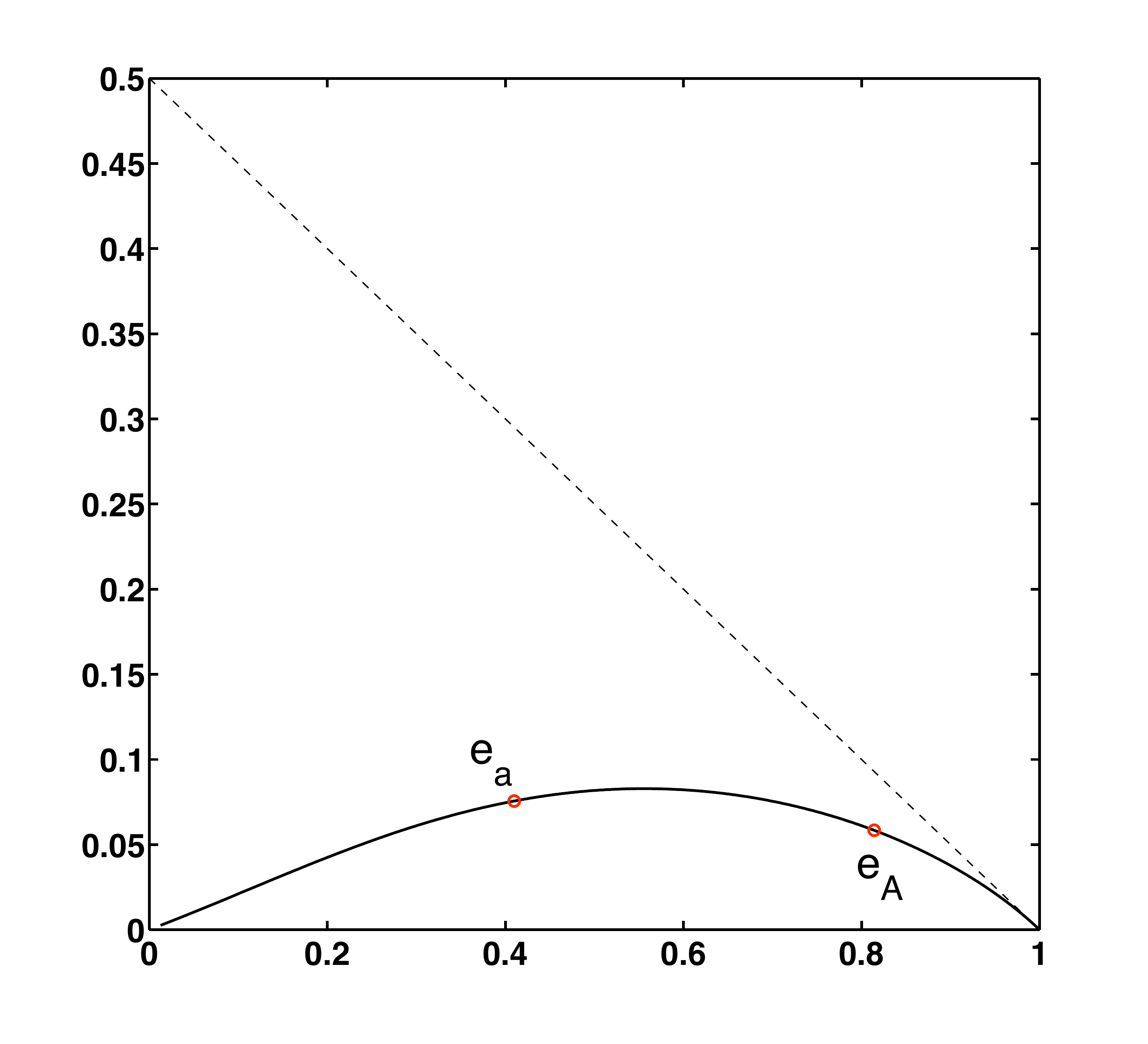}(a)\,\includegraphics[width = 0.46\linewidth]{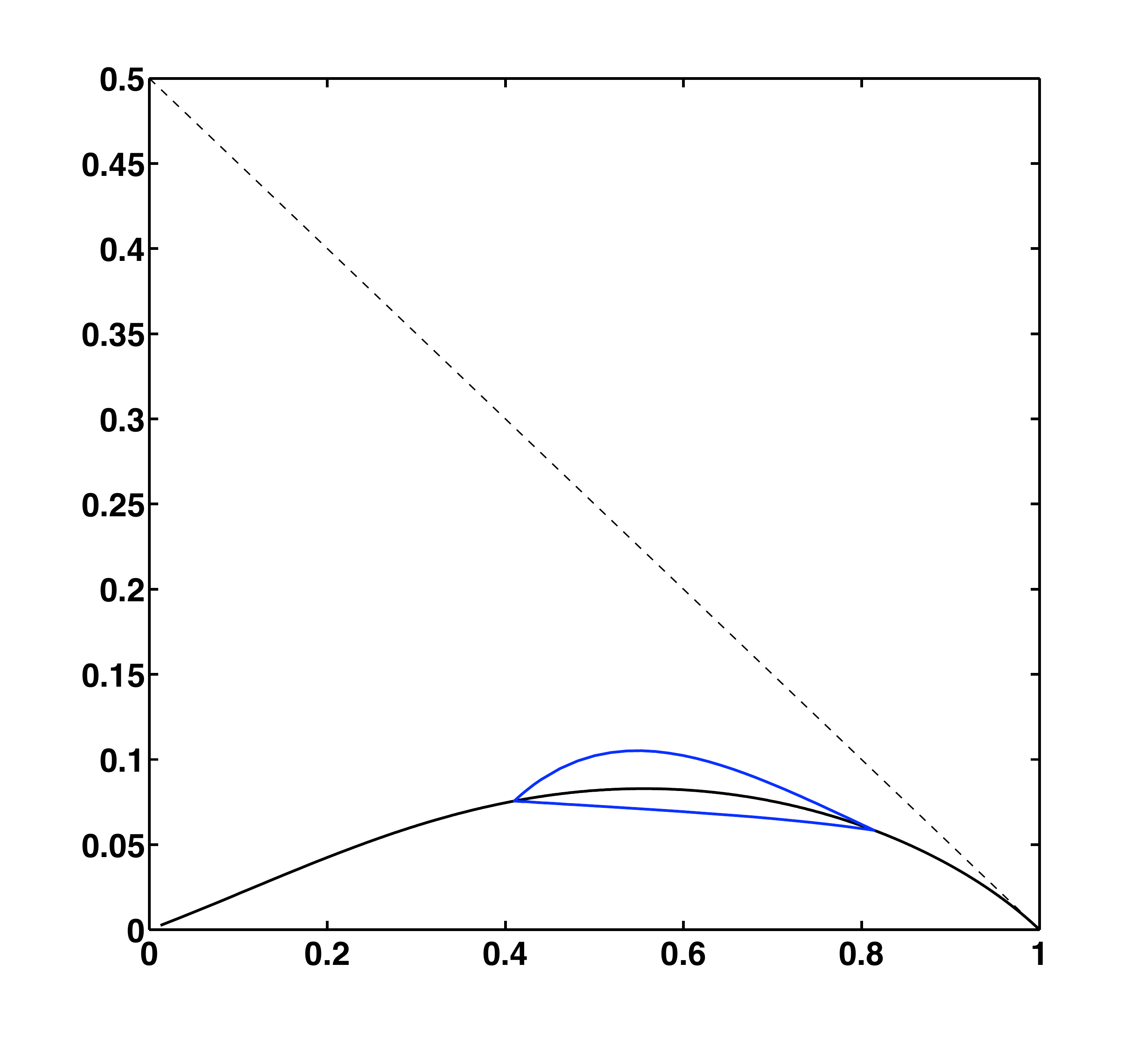}(b)
\caption{The set of eigenvectors $\Phi_0$ (a) and the boundary of the  ergodic set $\mZ_0$ (b). The dashed line represent the boundary of the simplex.}\label{fig:eigenvector}
\end{center}
\end{figure}

We seek the ergodic set which is stable, attractant and controllable. The natural candidate is defined through its boundary as follows. The boundary consists of two curves which are joining on the set $\Phi_0$.
\begin{definition}
The ergodic set $\mZ_0$ is the compact set enclosed by the two following curves:
\begin{equation} \label{eq:def Z0}
\gamma_a^A \begin{cases}
& \dot \gamma (s) = b(\gamma (s),a) \\
& \gamma (0) = e_A
\end{cases}\, ,  \quad
\gamma_A^a\begin{cases}
& \dot \gamma (s) = b(\gamma (s),A) \\
& \gamma (0) = e_a
\end{cases}   \, .
\end{equation}
The set $\mZ_0$ is well defined. In fact we have $\lim_{s\to+\infty} \gamma_a^A(s) = e_a$ and $\lim_{s\to+\infty} \gamma_A^a(s) = e_A$.
\end{definition}
In Figure \ref{fig:eigenvector} we have plotted an example of the ergodic set $\mZ_0$ for the running example.
We list in the following Proposition some useful properties of the set $\mZ_0$ which are derived from the very definition.
\begin{lemma} \label{lem:stability} 
\begin{enumerate}[(i)]
\item The curves $\gamma^A_a,\gamma^a_A$ lie on the opposite sides of $\Phi_0$. We assume without loss of generality that the curve $\gamma^A_a$ belongs to the subset $\Phi_+$, whereas the curve $\gamma^a_A$ belongs to the subset $\Phi_-$. 
\item The set $\mZ_0$ is stable: the vector fields $\{b(y,\alpha)\;,\; \alpha\in [a,A]\}$ are all pointing inwards on the boundary of $\mZ_0$.
\end{enumerate}
\end{lemma}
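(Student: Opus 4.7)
The proof rests on the affine decomposition $b(y,\alpha)=b(y,a)+(\alpha-a)Fy$ (and its analog with $a$ replaced by $A$) combined with the algebraic identity
\[
\la Fy,\Theta b(y,a)\ra \;=\; -\varphi(y).
\]
To establish this, note that $\la Fy,\Theta(aFy)\ra=0$ since $\Theta$ is the rotation by $\pi/2$, and $\la Fy,\Theta(Gy-\la m,Gy\ra y)\ra=-\la Gy-\la m,Gy\ra y,\Theta Fy\ra=-\varphi(y)$, using the antisymmetry $\la u,\Theta v\ra=-\la v,\Theta u\ra$ together with the definition of $\varphi$.

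For \emph{(i)}, I apply \eqref{eq:sign accross} at $y=e_A$ with control $a$ and at $y=e_a$ with control $A$:
\[
\la\Theta\left.\tfrac{de_\alpha}{d\alpha}\right|_{A}, b(e_A,a)\ra = (A-a)\la\left.\tfrac{de_\alpha}{d\alpha}\right|_{A},\Theta Fe_A\ra,\quad
\la\Theta\left.\tfrac{de_\alpha}{d\alpha}\right|_{a}, b(e_a,A)\ra = -(A-a)\la\left.\tfrac{de_\alpha}{d\alpha}\right|_{a},\Theta Fe_a\ra.
\]
By \textbf{(H4)} the two right-hand scalar products share the same constant non zero sign, while the prefactors $\pm(A-a)$ are opposite. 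Hence $\gamma_a^A$ and $\gamma_A^a$ leave $\Phi_0$ in opposite normal directions. To propagate this globally, observe that at any hypothetical re-crossing $e_\beta\in\Phi_0$ of, say, $\gamma_a^A$, the sign of $\tfrac{d}{ds}\varphi(\gamma_a^A(s))$ at that instant is controlled again by \textbf{(H4)} and $\mathrm{sign}(a-\beta)$ (coupled with the constant sign of the scalar $\lambda(\beta)$ in $\nabla\varphi(e_\beta)=\lambda(\beta)\Theta\left.\tfrac{de_\alpha}{d\alpha}\right|_\beta$), which is incompatible with returning to $\Phi_0$ from the entered side. The two trajectories therefore remain on opposite sides of $\Phi_0$ until their asymptotic limits.

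For \emph{(ii)}, on the boundary arc $\gamma_a^A$ the affine decomposition together with $\la Fy,\Theta Fy\ra=0$ give
\[
\la b(y,\alpha),\Theta b(y,a)\ra \;=\; (\alpha-a)\,\la Fy,\Theta b(y,a)\ra \;=\; -(\alpha-a)\,\varphi(y),
\]
which is non positive since $\alpha\geq a$ and $\gamma_a^A\subset\Phi_+$. Analogously on $\gamma_A^a\subset\Phi_-$, $\la b(y,\alpha),\Theta b(y,A)\ra=-(\alpha-A)\varphi(y)\leq 0$. It remains to identify $-\Theta b_{\mathrm{tan}}$ as the inward normal of $\mZ_0$: at the corner $e_A$, the vector $-\Theta b(e_A,a)\propto(A-a)\Theta Fe_A$ has component along the inward direction $-\left.\tfrac{de_\alpha}{d\alpha}\right|_{A}$ equal to $-(A-a)\la\left.\tfrac{de_\alpha}{d\alpha}\right|_{A},\Theta Fe_A\ra$, positive by \textbf{(H4)}. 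Continuity propagates this orientation along the entire boundary (noting $b(y,a)\neq 0$ on the interior of $\gamma_a^A$), and the argument on $\gamma_A^a$ is symmetric.

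The main obstacle is not analytic but the careful bookkeeping of orientations: once the identity $\la Fy,\Theta b(y,a)\ra=-\varphi(y)$ is in hand, both claims reduce to sign checks, and \textbf{(H4)} supplies the consistency required to identify ``opposite sides'' in (i) and the ``inward'' half-plane in (ii).
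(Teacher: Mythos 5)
Your part (ii) is fine, but your argument for part (i) has a genuine gap at the propagation step. The hypothesis \textbf{(H4)} combined with \eqref{eq:sign accross} controls the normal component of $b(e_\beta,\alpha)$ across $\Phi_0$ only through $\sign(\alpha-\beta)$, so it forbids a re-crossing only at points $e_\beta$ lying on one side of the control value. Concretely, the curve $\gamma^a_A$ (constant control $A$, lying in $\Phi_-$) cannot exit through $\{e_\beta:\ \beta<A\}$, but nothing in your sign bookkeeping prevents it from crossing $\Phi_0$ at some $e_\beta$ with $\beta>A$, where $b(e_\beta,A)=(A-\beta)Fe_\beta$ points from $\Phi_-$ towards $\Phi_+$; symmetrically, $\gamma^A_a\subset\Phi_+$ (control $a$) could a priori leave $\Phi_+$ through $\{e_\beta:\ \beta<a\}$. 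Your sentence ``incompatible with returning to $\Phi_0$ from the entered side'' is therefore not justified on the whole curve $\Phi_0$: this is precisely the spiraling phenomenon the authors discuss, and \textbf{(H4)} alone does not exclude it. Your proof never invokes \textbf{(H5)}, whereas the paper's proof uses it in an essential way: by \textbf{(H5)} the auxiliary trajectories $\gamma_A^0$ (from $e_0$ with control $A$) and $\gamma_a^\infty$ (from $e_\infty$ with control $a$) never cross $\Phi_0$; by uniqueness for ODEs with the same constant control, $\gamma^a_A$ cannot cross $\gamma_A^0$, so it is sandwiched between $\gamma_A^0$ and the arc of $\Phi_0$ joining $e_0$ to $e_A$, hence confined to $\Phi_-$ (and symmetrically $\gamma^A_a\subset\Phi_+$, with an extra continuity argument because $\gamma_a^\infty$ may start tangentially to $\Phi_0$). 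Without this barrier argument, or some substitute for it, statement (i) is not established; the authors stress in their final discussion that \textbf{(H5)} is exactly what rules out spiraling in the construction of $\mZ_0$.

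For (ii) your computation coincides with the paper's: the affine decomposition $b(y,\alpha)=b(y,a)+(\alpha-a)Fy$ together with the skew-symmetry of $\Theta$ gives $\la b(y,\alpha),\Theta b(y,a)\ra=-(\alpha-a)\varphi(y)\leq 0$ on $\gamma^A_a$ and the analogous inequality on $\gamma^a_A$; the paper states the same estimate with $n_a=\Theta b(\cdot,a)/|\Theta b(\cdot,a)|$ declared to be the exterior normal, and your identification of the outward side via the corner $e_A$ and continuity is a reasonable justification of that convention. Note, however, that the sign of $\varphi$ on each boundary arc is exactly statement (i), so the gap above propagates to (ii) as written.
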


\begin{proof}
\noindent\emph{(i)} We denote by $\gamma_A^0$ the trajectory connecting $e_0$ to $e_A$ with constant control $A$. Similarly we denote by $\gamma_a^\infty$ the trajectory connecting $e_\infty$ to $e_a$ with constant control $a$. From Hypothesis {\bf (H4)} these two trajectories initially start on the two opposite sides of the line of eigenvectors $\Phi_0$. Notice that $\gamma_a^\infty$ may start tangentially to $\Phi_0$ (it is actually the case for the running example). However a continuity argument for $\gamma_a^\beta$ with $\beta\to +\infty$ yields the statement. Hypothesis  {\bf (H5)} guarantees that they do not cross $\Phi_0$, so that they stay on opposite sides of $\Phi_0$ forever. We assume without loss of generality that $\gamma_A^0$ lies in $\Phi_-$ and $\gamma_a^\infty$ lies in $\Phi_+$. 



Second from the bounds on the control $a\leq \alpha(t)\leq A$, together with Hypothesis {\bf (H4)} \eqref{eq:sign accross}, the trajectory $\gamma_A^a$ starts on the same side as $\gamma_A^0$ ({\em i.e.} $\Phi_-$ with the above convention). Moreover it cannot cross the  line of eigenvectors $\Phi_0$ at $e_\beta$ for $\beta\in (0,A)$. From the uniqueness theorem for ODE, it cannot cross the trajectory $\gamma_A^0$ either. As a conclusion, the trajectory $\gamma_A^a$ is sandwiched between the portion of $\Phi_0$ between $e_0$ and $e_A$, and $\gamma_A^0$. Therefore it belongs to $\Phi_-$. 

The same holds true for $\gamma_a^A\subset \Phi_+$.


\noindent\emph{(ii)} We denote by $n(z)$ the unit vector normal and exterior to the boundary of $\mZ_0$. We have more precisely:
\begin{equation*} n_a(z) = \dfrac{\Theta b(z,a)}{|\Theta b(z,a)|}  \quad \mbox{on $\gamma^A_a$}\quad, \quad 
n_A(z) = \dfrac{\Theta b(z,A)}{|\Theta b(z,A)|}  \quad \mbox{on $\gamma^a_A$}\,.
\end{equation*} 
We have on the one side $\gamma^A_a$: $\forall\alpha\in [a,A]\; \la b(z,\alpha) , n_a(z) \ra \leq \la b(z,a) , n_a(z) \ra = 0$. Indeed we have
\begin{align*}
\la b(z,\alpha)- b(z,a) , n_a(z) \ra & = (\alpha - a) \la   Fz , n_a(z)\ra  \\
&  = (\alpha - a) \la \Theta  Fz  , \Theta n_a(z) \ra \\
& = (\alpha - a) \la  \Theta Fz  , - \dfrac{b(z,a)}{|b(z,a)|}\ra \\
& = \dfrac{(a - \alpha )}{|b(z,a)|} \la \Theta Fz  , Gz - \la  \m ,  G z\ra z \ra \leq 0\,. 
\end{align*}
We have on the other side  $\gamma^a_A$:
\begin{equation*}
\la b(z,\alpha)- b(z,A) , n_A(z) \ra = (\alpha - A) \la   Fz , n_A(z)\ra = \dfrac{(A - \alpha )}{|b(z,A)|} \varphi(z) \leq 0\,. 
\end{equation*}
\end{proof}

\paragraph{Step 2- Exact controllability inside the ergodic set.}
The purpose of this step is to prove the following Lemma \ref{lem:controllability}, which asserts exact controllability in the ergodic set, except  a narrow band.
First we introduce some notations:
\begin{itemize}
\item $\mZ_{-\delta}$ denotes the ergodic set, from which we have substracted a narrow band close to the boundary $\gamma^A_a\cup\gamma^a_A$ (Figure \ref{fig:controllability}). It is defined similarly as $\mZ_0$ by the delimitation of the two following curves,
\begin{equation*}
\gamma^{A-\delta}_{a+\delta} \begin{cases}
& \dot \gamma (s) = b (\gamma (s),a+\delta ) \\
& \gamma (0) = e_{A-\delta}
\end{cases}\, ,  \quad
\gamma^{a+\delta}_{A-\delta}\begin{cases}
& \dot \gamma (s) = b(\gamma (s),A-\delta) \\
& \gamma (0) = e_{a+\delta}
\end{cases}  \, .
\end{equation*}
\item $\mZ_{+2\delta}$ denotes the ergodic set, to which we have added a narrow band close to the boundary $\gamma^A_a\cup\gamma^a_A$ (Figure \ref{fig:tunneling}). It is defined similarly as $\mZ_0$ by the delimitation of the two following curves,
\begin{equation*}
\gamma^{A+2\delta}_{a-2\delta} \begin{cases}
& \dot \gamma (s) = b(\gamma (s),a
-2\delta) \\
& \gamma (0) = e_{A+2\delta}
\end{cases}\, ,  \quad
\gamma^{a-2\delta}_{A+2\delta}\begin{cases}
& \dot \gamma(s) = b(\gamma (s), A+2\delta) \\
& \gamma (0) = e_{a-2\delta}
\end{cases}   \, . 
\end{equation*}
\end{itemize}

\begin{lemma}\label{lem:controllability}
The system $\dot y(t) = b(y(t),\alpha)$ is exactly controllable in the subset $\mZ_{-\delta}$. For any $z$, $z'$ in $\mZ_{-\delta}$ there exists a time $T = T(z,z')$ and  $\alpha(t):[0,T]\to [a,A]$ such that $y(0) = z$ and $y(T) = z'$. Moreover it is possible to construct the control $\alpha(t)$ following a bang-bang procedure: $\alpha(t):[0,T]\to \{a,A\}$. Last, the minimal time $T$ needed to connect $z$ and $z'$ is uniformly bounded for $z,z'\in \mZ_{-\delta}$.
\end{lemma}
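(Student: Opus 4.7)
Set $X(y) := b(y,a)$ and $Y(y) := b(y,A)$; the relation $m^T F = 0$ gives the key identity $Y(y) - X(y) = (A-a)Fy$. Consequently $\{X(y),Y(y)\}$ is a basis of the tangent plane $T_y\mS$ if and only if $Fy$ is not collinear with $b(y,a)$, equivalently $y \notin \Phi_0$. By Lemma~\ref{lem:stability}~(ii) every bang-bang trajectory issued from $\mZ_0$ stays in $\mZ_0$, so all constructions below are well-defined globally.

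At $y_0 \in \mZ_{-\delta}\setminus\Phi_0$ local controllability is immediate: the two-arc endpoint map $(t_1,t_2)\mapsto \Psi_Y(t_2)\circ\Psi_X(t_1)(y_0)$, with $\Psi_X,\Psi_Y$ denoting the respective flows, is a local diffeomorphism at $(0,0)$, so a full neighborhood of $y_0$ is exactly reachable from $y_0$ in short time. At a point $y_0 = e_\beta \in \Phi_0\cap\mZ_{-\delta}$ the two fields are collinear, $X(e_\beta) = (a-\beta)Fe_\beta$ and $Y(e_\beta) = (A-\beta)Fe_\beta$, and the two-arc scheme degenerates. We replace it by a four-arc bang-bang whose leading-order displacement, taken along the parameter that annihilates the two-arc contribution, is a non-zero multiple of the Lie bracket
\[
[X,Y](e_\beta) = -(A-a)\bigl[(G + \beta F - \lambda_P(\beta)\,\Id)Fe_\beta - \la m, GFe_\beta\ra\, e_\beta\bigr]\, .
\]
A direct computation shows that $[X,Y](e_\beta)$ has a non-zero component tangent to $\Phi_0$ at $e_\beta$; this exploits Hypothesis~\textbf{(H4)}, which encodes the transversality between $Fe_\beta$ and $\Phi_0$ and thus converts into non-degeneracy of the bracket modulo the direction of $X(e_\beta)$. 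Consequently $\{X(e_\beta),[X,Y](e_\beta)\}$ spans $T_{e_\beta}\mS$, yielding a bang-bang reachable neighborhood of $e_\beta$ as well.

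Global controllability and the uniform time bound follow by compactness and connectedness: each $y\in \mZ_{-\delta}$ admits an open neighborhood $U(y)\subset \mZ_0$ contained in its forward bang-bang reachable set (and from which $y$ is itself reachable) within some time $\tau(y)$. Extract a finite subcover $\{U(y_k)\}_{k=1}^N$ of $\mZ_{-\delta}$; since $\mZ_{-\delta}$ is a topological disk, any two points $z,z'\in \mZ_{-\delta}$ can be linked through a chain of at most $N$ overlapping $U(y_k)$'s, and concatenating the corresponding local bang-bang controls produces a global bang-bang control of total duration at most $N\max_k\tau(y_k)$, a bound independent of $z,z'$.

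The main obstacle is the analysis on $\Phi_0$: the two extremal fields become collinear there, defeating the naive two-arc argument, and the Lie-bracket computation must be combined with Hypothesis~\textbf{(H4)} to recover transversality. A secondary point is that the margin $\delta > 0$ is essential to prevent the local reachability radii and times $\tau(y_k)$ from collapsing as $y$ approaches $\partial\mZ_0$, where the extremal vector fields $X,Y$ become tangent to the boundary $\gamma_a^A\cup\gamma_A^a$; on $\mZ_{-\delta}$ one retains a uniform transverse component, which is what guarantees a finite subcover with controlled $\tau(y_k)$.
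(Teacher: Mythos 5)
Your local-controllability step does not hold, and it is the load-bearing step of the whole argument. At a point $y_0\in\mZ_{-\delta}\setminus\Phi_0$ the admissible velocity set $\{b(y_0,\alpha):\alpha\in[a,A]\}$ is a segment that does \emph{not} contain $0$ (the only rest points are the $e_\beta\in\Phi_0$ with $\beta\in[a,A]$), so the system has a genuine drift there and small-time local exact controllability is impossible: in time $t$ the reachable set is contained in a thin sector of aperture bounded by the directions of $b(y_0,a)$ and $b(y_0,A)$, and never covers a full neighborhood of $y_0$. Your two-arc endpoint map $(t_1,t_2)\mapsto\Psi_Y(t_2)\circ\Psi_X(t_1)(y_0)$ is indeed a local diffeomorphism at $(0,0)$, but the admissible parameters are restricted to the quadrant $t_1,t_2\ge 0$, whose image is only that sector; the "full neighborhood" conclusion would require running the flows backward in time, which is not an admissible control. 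The same objection applies to the four-arc Lie-bracket construction on $\Phi_0$: the standard commutator trick needs arcs along $-X$ and $-Y$, and at non-equilibrium points the spanning condition $\{X,[X,Y]\}$ does not yield exact local controllability for a system with drift. Since the subsequent compactness-and-chaining argument (each $U(y_k)$ reachable from $y_k$ \emph{and} $y_k$ reachable from $U(y_k)$ in time $\tau(y_k)$) rests entirely on this false local statement, the uniform time bound also collapses. Points "behind" $y_0$ relative to the drift can only be reached by a global excursion, so no purely local argument can work here.

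The paper's proof is instead global and exploits the planar structure of the simplex: from $z$ one follows the two forward trajectories with constant controls $a$ and $A$, whose concatenation $Y$ joins $e_a$ to $e_A$ inside $\mZ_0$; from $z'$ one follows the two \emph{time-reversed} trajectories with controls $a$ and $A$, whose concatenation $Y'$ must exit $\mZ_0$ through the two opposite boundary arcs $\gamma^A_a$ and $\gamma^a_A$ (uniqueness of ODE trajectories prevents crossings of the defining curves). A Jordan-type intersection argument then forces $Y\cap Y'\neq\emptyset$, and the intersection point yields a one-switch bang-bang control from $z$ to $z'$. The uniform bound on the connection time is obtained not from tangency at $\partial\mZ_0$, as you suggest, but by checking that the backward images of $\mZ_{-\delta}$ stay in a compact set avoiding neighborhoods of the two rest points $e_a$ and $e_A$, which are the only places where the transit time could blow up. If you want to salvage your strategy, you would have to replace "local exact controllability" by "reachability of a sector plus a global return mechanism", at which point you are essentially reconstructing the paper's global intersection argument.
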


\begin{figure}
\begin{center}
\includegraphics[width = 0.46\linewidth]{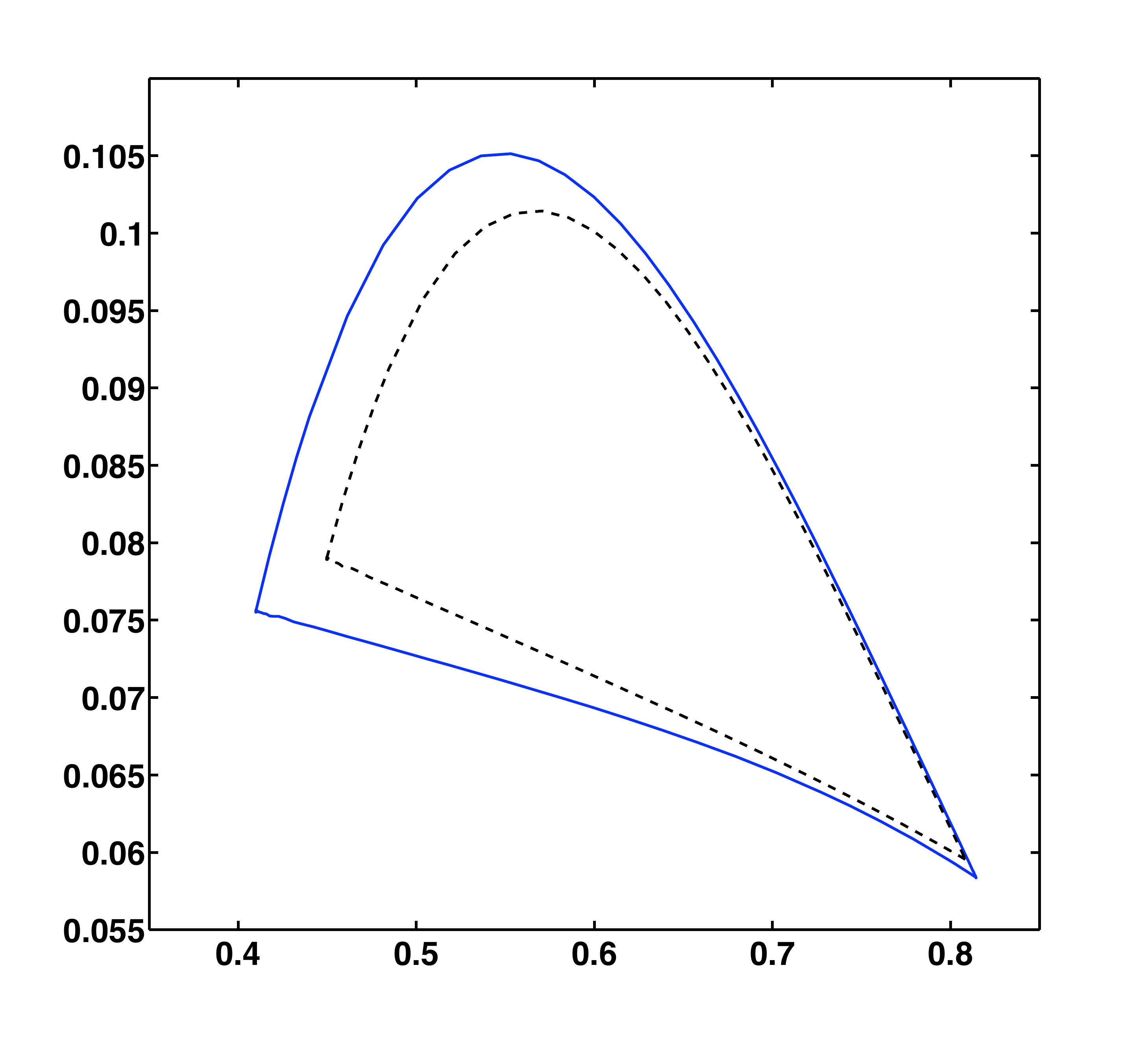}(a)\,\includegraphics[width = 0.46\linewidth]{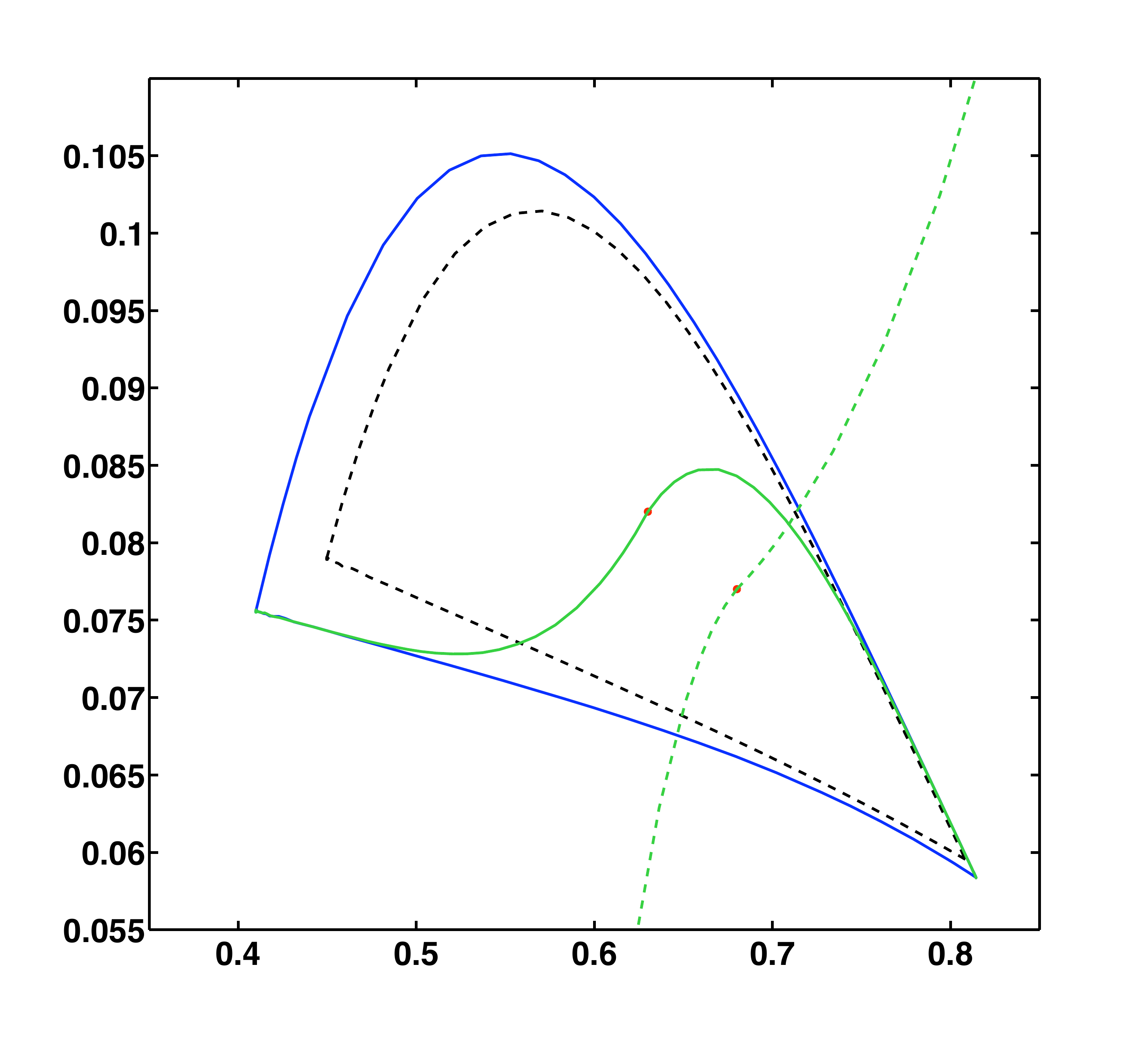}(b)
\caption{(a) The ergodic set $\mZ_0$ and the set $\mZ_{-\delta}$ (dashed line). (b) Illustration for the proof of the exact controllability in the set $\mZ_{-\delta}$: the two curves $Y$ (plain line) and $Y'$ (dashed line) necessarily intersect in $\mZ_0$.}
\label{fig:controllability}
\end{center}
\end{figure}

\begin{proof}
We aim at sending $z$ onto $z'$. We define the trajectories $y_{a}$ and $y_A$ starting from $y$ as follows,
\begin{equation*}
y_{a} \begin{cases}
& \dot y_{a}(t) = b(y_a(t),a) \\
& y_a(0) = z
\end{cases}\, ,  \quad
y_{A} \begin{cases}
& \dot y_{A}(t) = b(y_A(t),A) \\
& y_A(0) = z
\end{cases}\, .\end{equation*}
When we concatenate the two trajectories $Y = y_a\cup y_A$, we obtain a curve which connects $a$ and $A$.
Reversing time, we define two other trajectories starting from $z'$,
\begin{equation*}
y'_{a} \begin{cases}
& \dot y'_{a}(t) = - b(y'_a(t),a) \\
& y'_a(0) = z'
\end{cases}\, ,  \quad
y'_{A} \begin{cases}
& \dot y'_{A}(t) = - b(y'_A(t),A) \\
& y'_A(0) = z'
\end{cases}\, .\end{equation*}
Concatening the two trajectories we obtain a curve $Y' = y_a'\cup y_A'$ which leaves the simplex $\mS$. Moreover it leaves the  ergodic set $\mZ_0$ by the two opposite sides $\gamma^a_A$ and $\gamma^A_a$. Indeed $y_a'$ cannot intersect $\gamma^A_a$, and $y_A'$ cannot intersect $\gamma^a_A$ by the uniqueness theorem for ODE. It necessarily intersects the curve $Y$ since the latter  connects $a$ and $A$. 

Let $z''\in Y\cap Y'$. We assume without loss of generality that $z''= y_a(T)$ and also $z''= y'_A(T')$. Observe that we do not necessarily have $z''\in \mZ_{-\delta}$. However we do have $z''\in \mZ_0$ since the set $\mZ_0$ is stable. Finally we construct the control $\alpha$ as follows: (i) from time $t=0$ to $t=T$ we set $\alpha(t)\equiv a$, (ii) from $t=T$ to $t= T+T'$ we set $\alpha(t)\equiv A$. This control sends $z$ onto $z'$ within time $T+T'$.

The time $T'$ is clearly uniformly bounded. To prove that the time $T$ is also uniformly bounded for $z,z'\in \mZ_{-\delta}$, it is sufficient to avoid small neighbourhoods of the stationary points $e_a$ and $e_A$. The image of $\mZ_{-\delta}$ by the backward trajectories $y_a'$ and $y_A'$ inside $\mZ_0$ is a compact set which does not contain $e_a$ nor $e_A$. Therefore the time $T$ is uniformly bounded.  
\end{proof}

\paragraph{Step 3- Proof of the convergence towards $\lambda_{HJ}$ in the set $\mZ_{-\delta}$.}

We are now ready to prove an important lemma, which is a weaker version of Theorem \ref{th:eigHJ}.
\begin{lemma}
Under the same assumptions as above, there exists a constant $\lambda_{HJ}$ such that the following uniform convergence holds true:
\[   \lim_{\eps\to 0}\eps u_\eps(z) = \lambda_{HJ}\, , \quad \text{uniformly for $z\in \mathcal Z_{-\delta}$}\, . \] 
\end{lemma}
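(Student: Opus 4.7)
The approach I would take follows Arisawa's method and exploits the exact controllability of the dynamics on $\mZ_{-\delta}$ established in the previous step. The backbone of the argument is the dynamic programming principle applied to the infinite horizon optimization problem \eqref{eq:infinite horizon}.

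First I would derive the asymptotic constancy of $\eps u_\eps$ on $\mZ_{-\delta}$. Given $z, z' \in \mZ_{-\delta}$, the controllability lemma provides a bang-bang control $\alpha$ steering $z$ to $z'$ in time $T = T(z,z') \leq \overline T$, with $\overline T$ independent of $z, z'$. By DPP, applying the suboptimal control $\alpha$ gives
\[
u_\eps(z) \geq \int_0^T e^{-\eps s} L(y_\alpha(s))\,ds + e^{-\eps T} u_\eps(z')\,.
\]
Multiplying by $\eps$ and using that both $|L|$ on $\mS$ and $\eps u_\eps$ on $\mS$ are bounded by some $M>0$ (the latter because $u_\eps \leq M/\eps$), we obtain
\[
\eps u_\eps(z) - \eps u_\eps(z') \geq -\eps \overline T M - (1 - e^{-\eps \overline T}) \eps u_\eps(z') \geq -C\eps\,,
\]
uniformly on $\mZ_{-\delta} \times \mZ_{-\delta}$. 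By symmetry $|\eps u_\eps(z) - \eps u_\eps(z')| \leq C\eps$, so $\sup_{\mZ_{-\delta}} \eps u_\eps - \inf_{\mZ_{-\delta}} \eps u_\eps \to 0$ as $\eps \to 0$.

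Next I would establish the existence and uniqueness of the limit. Define $\overline\lambda = \limsup_{\eps \to 0}(\sup_{\mZ_{-\delta}} \eps u_\eps)$ and $\underline\lambda = \liminf_{\eps \to 0}(\inf_{\mZ_{-\delta}} \eps u_\eps)$; by the previous step these can equivalently be computed pointwise at any $z_0 \in \mZ_{-\delta}$. To see $\overline\lambda = \underline\lambda$, I would work with the normalized functions $w_\eps = u_\eps - u_\eps(z_0)$, which by the same DPP bound have oscillation uniformly bounded on $\mZ_{-\delta}$ independently of $\eps$, and are locally Lipschitz on the interior (via the HJ equation \eqref{eq:HJ stat} and the standard estimate $|p| \cdot \inf_\alpha |b(y,\alpha)| \leq \eps u_\eps + |L|$ on directions away from the boundary). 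Along a subsequence $\eps_n \to 0$, Arzelà-Ascoli yields $w_{\eps_n} \to \overline w$ uniformly and $\eps_n u_{\eps_n}(z_0) \to \lambda$, and the stability of viscosity solutions gives that $\overline w$ is a viscosity solution of the eigenproblem $H(y, D_y \overline w) = \lambda$ on the interior of $\mZ_{-\delta}$. The controllability of the dynamics then forces $\lambda$ to be independent of the extracting subsequence: if $\lambda_1 < \lambda_2$ corresponded to two limit profiles $\overline w_1, \overline w_2$, then applying DPP for the limit problem along any control trajectory connecting two points would yield $\overline w_i(z) - \overline w_i(z') = \sup_\alpha \int_0^T (L(y_\alpha(s)) - \lambda_i) ds$, and the controllability of the forward-backward dynamics on $\mZ_{-\delta}$ (reaching every point in time $\leq \overline T$) would make the two right-hand sides incompatible unless $\lambda_1 = \lambda_2$.

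The main obstacle is precisely this last uniqueness step: since the Hamiltonian $H$ is not coercive (Remark \ref{rem:not coercive}), the standard comparison principle that would immediately identify $\lambda$ fails, and one must really exploit the controllability on $\mZ_{-\delta}$ (not just the asymptotic constancy of $\eps u_\eps$) to rule out multiple subsequential limits. A subtle ingredient there is that the Lipschitz/equicontinuity estimate for $w_\eps$, needed to legitimately pass to viscosity limits, is not a direct consequence of the HJ equation but must be derived from the controllability bound together with the bang-bang structure of the optimal trajectories.
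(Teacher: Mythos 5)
Your first step is exactly the paper's argument: uniform boundedness $\|\eps u_\eps\|_\infty\leq\|L\|_\infty$, then the dynamic programming principle along the bang-bang control from Lemma~\ref{lem:controllability} with uniformly bounded transfer time, giving $|\eps u_\eps(z)-\eps u_\eps(z')|\leq 2\|L\|_\infty\,\eps\, T(z,z')$ and hence uniform subsequential convergence of $\eps u_\eps$ to constants on $\mZ_{-\delta}$. Up to that point the proposal is correct and coincides with the paper.

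The gap is in your uniqueness step. The ``standard estimate'' you invoke, $|p|\cdot\inf_\alpha|b(y,\alpha)|\leq \eps u_\eps+|L|$, is false here: for fixed $y$ the admissible velocities $b(y,\alpha)=Gy-\la m,Gy\ra y+\alpha Fy$ form a one-dimensional affine segment in the two-dimensional tangent space of $\mS$, so the stationary equation \eqref{eq:HJ stat} only controls $\la b(y,\alpha),D_yu_\eps\ra$ along that segment and gives no bound on $|D_yu_\eps|$ in the transverse cone where $H(y,\cdot)\to-\infty$ (this is precisely the non-coercivity of Remark~\ref{rem:not coercive}). The controllability lemma gives a transfer time that is uniformly \emph{bounded}, not $O(|z-z'|)$, so the DPP bound yields bounded oscillation of $u_\eps-u_\eps(z_0)$ but not equicontinuity; the paper states explicitly that equicontinuity of $(u_\eps)_\eps$ is not available and that no eigenvector is constructed, so your Arzel\`a--Ascoli passage to a limit profile $\overline w$ solving $H(y,D_y\overline w)=\lambda$ cannot be carried out as written, and the subsequent ``incompatibility'' argument for $\lambda_1\neq\lambda_2$ is left at the level of an assertion. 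The paper's route to uniqueness is different and avoids any compactness of $u_\eps$: it is deferred to Step~7, where, once uniform convergence of $\eps u_\eps$ (along subsequences) is known on all of $\mS$, two distinct limits $l_1<l_2$ would make $-u_{\eps_1}$ a viscosity subsolution and $-u_{\eps_2}$ a viscosity supersolution of $-l_1-\nu+H(y,D_yu)=0$, and a comparison argument plus the freedom to add a large constant to $u_{\eps_2}$ gives a contradiction. If you want to keep your structure, you must either prove $T(z,z')=O(|z-z'|)$ (which the paper only identifies as a desirable refinement) or replace the compactness/eigenvector step by a comparison-type argument at the level of the $u_\eps$ themselves.
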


\begin{proof}
First we observe that the function $\eps u_\eps(y)$ is uniformly bounded on the simplex $\mS$:
\begin{equation} \label{eq:bound L}
\left| \int_0^{\infty} \eps e^{-\eps t}L(y_\alpha(t))\, dt \right| \leq \|L\|_\infty \int_0^{\infty} \eps e^{-\eps t}\, dt = \|L\|_\infty\, . 
\end{equation}
Taking the supremum over all possible controlled trajectories, we end up with $ \|\eps u_\eps \|_\infty \leq \|L\|_\infty$.

Second, we show that the family $(\eps u\eps)_\eps$ is equicontinuous. Let $z,z'\in \mZ_{-\delta}$. From Lemma~\ref{lem:controllability} there exists a time $T(z,z')$ and a controlled trajectory $z_\alpha(t)$ sending $z$ onto $z'$ within time $T(z,z')$.  By the dynamic programming principle, we have
\begin{align*} \eps u_\eps (z') -  \eps u_\eps (z) 
&\leq \eps u_\eps (z') - \eps \int_0^{T(z,z')} e^{-\eps t} L(z_\alpha(t))\, dt   - \eps e^{-\eps T(z,z')} u_\eps(z') \\
& \leq \eps u_\eps(z') \left( 1 - e^{-\eps T(z,z')}\right) + \|L\|_\infty \left( 1 - e^{-\eps T(z,z')} \right)
\, .    \end{align*}
Exchanging the roles of $z$ and $z'$, we obtain the following uniform bound:
\begin{equation} \label{eq:almost lipschitz} |\eps u_\eps (z') - \eps u_\eps (z)| \leq 2\|L\|_\infty \eps T(z,z')\, . \end{equation}
This simple estimate proves that (up to extraction), $\eps u_\eps (z)$ converges uniformly towards some constant. 
We learn from \cite{Arisawa2} that it converges in fact to a unique constant, not depending on the choice of the extraction (see the conclusion of the proof in Step~7). \end{proof}

\begin{remark} Any refinement in the estimate of the minimal control time $T(z,z')$ would bring additional information about the convergence of $u_\eps$. For instance, if we were able to prove that $T = \mathcal O(|z-z'|)$, then we would be able to conclude that $u_\eps$ is uniformly Lipschitz \eqref{eq:almost lipschitz}. Hence the Ascoli-Arzela theorem would yield convergence of $u_\eps -\eps^{-1} \lambda_{HJ}$ towards some eigenvector $\overline{u}$ up to extraction \cite{CDL}.
\end{remark}

\paragraph{Step 4- Flow of trajectories and local charts.}
In order to prove attractiveness of the ergodic set, we shall use two couples of charts to cover parts of the simplex $\mS$. The first couple consists in trajectories starting from the boundary of the simplex $\partial \mS$, and driven by constant controls $a-\delta$ or $A+\delta$. The second couple consists in trajectories starting from the line of eigenvectors $\Phi_0$, and driven by constant controls $a-\delta$ or $A+\delta$ too. 

\begin{figure}
\begin{center}
\includegraphics[width = 0.46\linewidth]{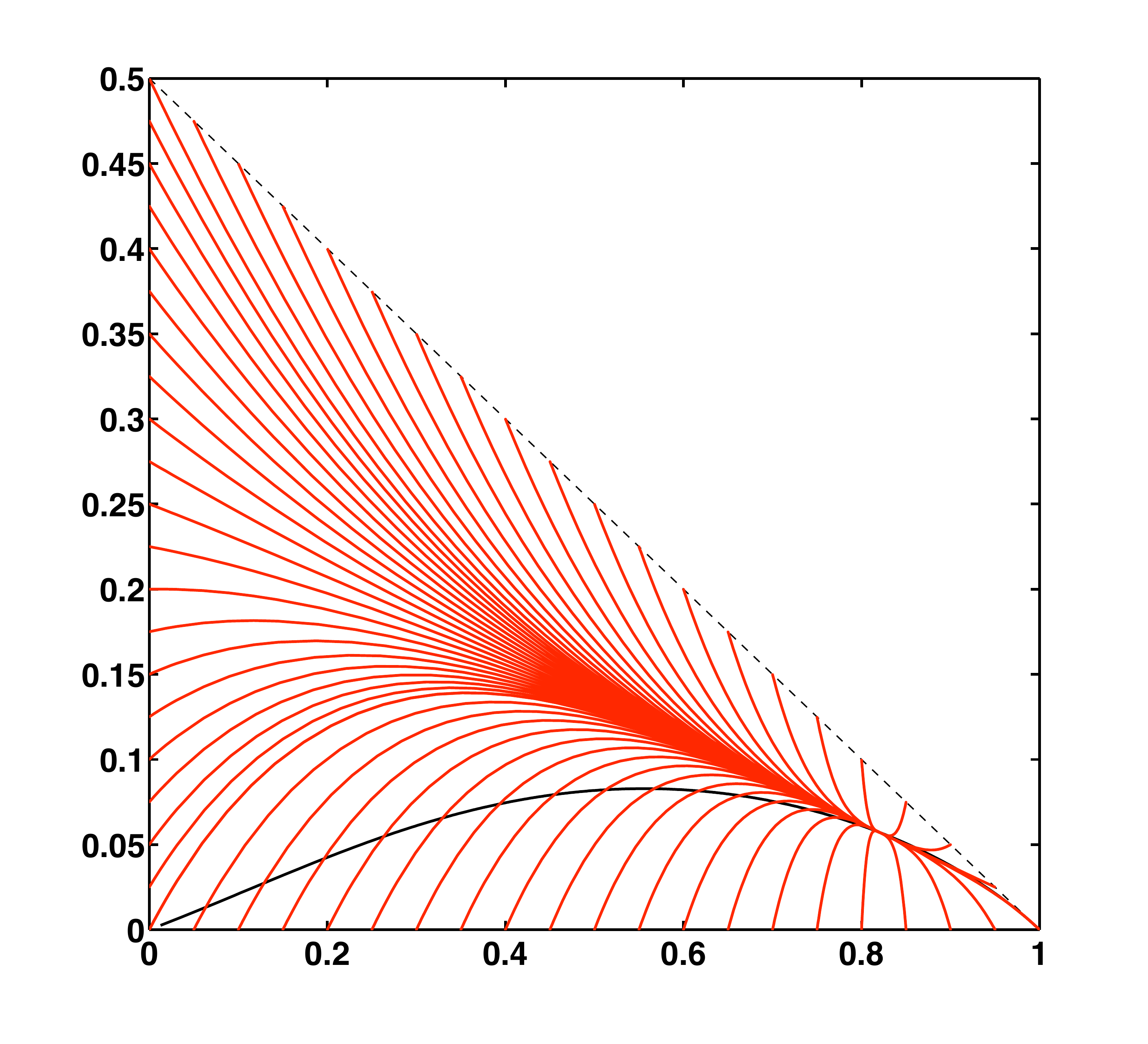}(a)\,\includegraphics[width = 0.46\linewidth]{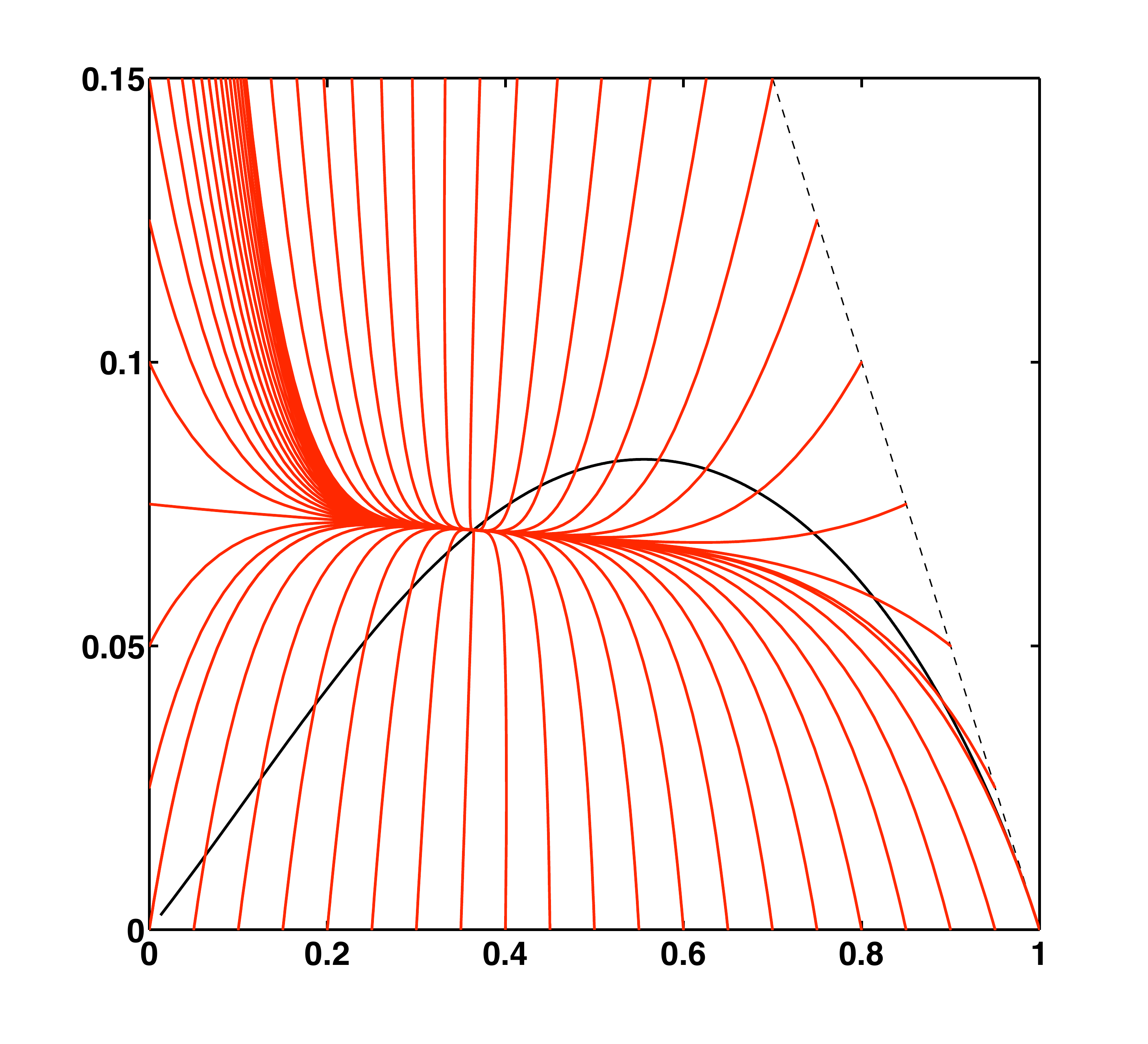}(b) 
\caption{(a) Grid of trajectories starting from the boundary of the simplex $\mS$, with constant control $\alpha(t)\equiv A+\delta$. (b) Similar figure with constant control $\alpha(t)\equiv a-\delta$.} \label{fig:local charts}
\end{center}
\end{figure}


We shall perform all the computation for one given set of charts (starting from the boundary $\partial\mS$ with constant control $A+\delta$). The other cases are similar.
We introduce the following family of curves, 
\begin{equation} \label{eq:chartA} 
 \Gamma_{A+\delta} \begin{cases} \frac{\partial}{\partial s} \Gamma(\theta,s) = b(\Gamma(\theta,s), A+\delta) \\ \Gamma(\theta,0) = Y_\theta  \in \partial\mS \end{cases}\, ,
\end{equation}
where the scalar $\theta$ is a parametrization of the boundary $\partial \mS$. More precisely we adopt a piecewise linear parametrization with the direct orientation. This set of charts covers the whole simplex $\mS$, see Figure \ref{fig:local charts}a. 

It is worth recalling that all these trajectories converge towards the eigenvector $e_{A+\delta}$ which lies on $\Phi_0$.
We denote $n_{A+\delta}(y) = \frac{\Theta b(y,A+\delta)}{|\Theta b(y,A+\delta)|}$ the unitary vector normal to the curve $\Gamma_{A+\delta}(\theta,\cdot)$ passing through $y = \Gamma_{A+\delta}(\theta,s)\in \mS$. We denote $\Gamma_{A+\delta} = \Gamma$ and $n_{A+\delta}= n$ throughout this step.

The key observation is that any trajectory $\dot y_\alpha(t) = b(y_\alpha(t),\alpha(t))$ is in fact moving monotically along the family of curves $\{\Gamma(\theta,\cdot)\}_\theta$ on the sets $\Phi_-$ and $\Phi_+$. We are able to quantify this phenomenon. 
Let $\alpha(t)$ be any measurable control which takes values in $[a,A]$. We have
\begin{align*} 
\la \dot y_\alpha(t), n(y_\alpha(t))  \ra 
&= \la (\alpha(t) - (A+\delta)) F y_\alpha(t) + b(y_\alpha(t),A+\delta), n(y_\alpha(t)) \ra \\
&= \dfrac{((A+\delta) - \alpha(t))}{|\Theta b(y_\alpha(t),A+\delta)|}\la \Theta F y_\alpha(t)  , b(y_\alpha(t),A+\delta) \ra \\
&=  \dfrac{((A+\delta) - \alpha(t))}{| b(y_\alpha(t),A+\delta)|}  \varphi(y_\alpha(t))\, . 
\end{align*}
We rewrite $y_\alpha(t)$ using the local chart $y_\alpha(t) = \Gamma(\theta(t),s(t))$. We have accordingly,
\[ \dot y_\alpha(t) =  \dot \theta(t) \dfrac{\partial}{\partial \theta}\Gamma(\theta(t),s(t)) + \dot s(t) \frac{\partial}{\partial s}\Gamma(\theta(t),s(t)) \, . \]
By construction, we have $\frac{\partial}{\partial s}\Gamma(\theta,s)=  b(\Gamma(\theta,s),A+\delta)$. Consequently,
\[ \la \dot y_\alpha(t), n(y_\alpha(t) \ra =  \dot \theta(t) \la \dfrac{\partial}{\partial \theta}\Gamma(\theta(t),s(t)), n(\Gamma(\theta(t),s(t))) \ra + 0\, . \] 
We compute the evolution of the quantity $\langle \frac{\partial\Gamma }{\partial \theta}, n(\Gamma)\rangle$ along the curve $\Gamma(\theta,\cdot)$. We aim at showing that this quantity has constant sign. This is intuitively clear, since trajectories cannot cross. We make it quantitative in the following calculation. For the sake of clarity, we write temporarily $b(\Gamma,A+\delta) = b(\Gamma)$.
\begin{align*}
\frac{\partial}{\partial s} \la \frac{\partial\Gamma }{\partial \theta}, n(\Gamma)\ra 
&= \la \frac{\partial^2\Gamma }{\partial \theta\partial s}  ,n(\Gamma)  \ra + \la \frac{\partial \Gamma }{\partial \theta }  ,\Theta Db(\Gamma) \frac{\partial \Gamma}{\partial s}    \ra - \la \frac{\partial\Gamma }{\partial \theta}, n(\Gamma)\ra \la n(\Gamma)  , \dfrac1{|\Theta b|} \Theta Db(\Gamma) \frac{\partial \Gamma}{\partial s}\ra \\
&= \la Db(\Gamma) \frac{\partial \Gamma }{\partial \theta }   ,n(\Gamma)  \ra + \la \frac{\partial \Gamma }{\partial \theta } ,\Theta Db(\Gamma)b(\Gamma)       \ra - \la \frac{\partial\Gamma }{\partial \theta}, n(\Gamma)\ra \la n(\Gamma)  ,  \Theta Db(\Gamma) \dfrac{b(\Gamma)}{|\Theta b(\Gamma)|}\ra \\
&= \la  \frac{\partial \Gamma }{\partial \theta }   , \left[ Db(\Gamma)^T \Theta + \Theta Db(\Gamma) \right] \dfrac{b(\Gamma)}{|\Theta b(\Gamma)|}  \ra - \la \frac{\partial\Gamma }{\partial \theta}, n(\Gamma)\ra \la n(\Gamma)  ,  \Theta Db(\Gamma) \Theta^{-1} n(\Gamma) \ra 
\end{align*}
Notice that the matrix $[ Db(\Gamma)^T \Theta + \Theta Db(\Gamma)]$ is everywhere skew-symmetric, so is the matrix $\Theta$:
\begin{equation*} \left[ Db(\Gamma)^T \Theta + \Theta Db(\Gamma) \right]^T =   -\Theta Db(\Gamma)   -  Db(\Gamma)^T\Theta  \, . \label{eq:lambda}\end{equation*}
Therefore it is equal to $\Theta$ up to a scalar factor:
\[ \left[ Db(\Gamma)^T \Theta + \Theta Db(\Gamma) \right]= \widetilde{\omega}(\theta, s) \Theta\, . \]
To conclude, the scalar product $\langle \frac{\partial\Gamma }{\partial \theta}, n(\Gamma)\rangle$ cannot vanish. In addition, we have the semi-explicit formula: 
\[ \la \frac{\partial\Gamma }{\partial \theta}, n(\Gamma)\ra  = \exp\left( \int_0^s \omega(\theta,s')\, ds'\right) \la \dfrac d{d\theta} Y_\theta, n(Y_\theta)\ra \, , \]
where the scalar $\omega$ is given by
\begin{align*} 
\omega &= \widetilde{\omega} - \la n(\Gamma)  ,  \Theta Db(\Gamma) \Theta^{-1} n(\Gamma) \ra \nonumber \\
& = \la n(\Gamma), \widetilde{\omega}n(\Gamma)\ra - \la n(\Gamma)  ,  \Theta Db(\Gamma) \Theta^{-1} n(\Gamma) 
\ra  \nonumber \\
& = \la n(\Gamma), \left[Db(\Gamma)^T   + \Theta Db(\Gamma) \Theta^{-1}\right]n(\Gamma)\ra - \la n(\Gamma)  ,  \Theta Db(\Gamma) \Theta^{-1} n(\Gamma) 
\ra \nonumber \\
& = \la n(\Gamma),  Db(\Gamma)^T  n(\Gamma)\ra \, .\end{align*}
To conclude, we have proven the following Lemma.
\begin{lemma}[A monotonicity formula]\label{lem:dotbeta}
Let $(\theta_{A+\delta},s_{A+\delta})$ be some parametrization of the simplex, given by \eqref{eq:chartA}. The evolution of $\theta_{A+\delta}$ along the trajectories of $\dot y_\alpha(t) = b(y_\alpha(t),\alpha(t))$ is given by the following formula:
\begin{equation*} \dot \theta_{A+\delta}(t) =  ((A+\delta)-\alpha(t) ) \varphi(y_\alpha(t))  \exp\left(- \int_0^{s (t)} \omega(\theta ,s')\, ds' \right) \la \frac{d}{d \theta} Y_\theta, n_{A+\delta}(Y_\theta) \ra^{-1}\, ,  \label{eq:thetaA}\end{equation*}
where the scalar $\omega$ is defined by
\begin{equation*}  \label{eq:muA}
\omega(\theta,s) = \la  D_yb(\Gamma(\theta,s),A+\delta) n(\Gamma(\theta,s)), n(\Gamma(\theta,s))\ra\, .
\end{equation*}
In particular $\omega$ is uniformly bounded: $\|\omega\|_\infty \leq \|D_y b\|_\infty$.
\end{lemma}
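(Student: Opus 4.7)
The plan is to work in the local chart $(\theta,s)\mapsto\Gamma_{A+\delta}(\theta,s)$, writing $y_\alpha(t)=\Gamma(\theta(t),s(t))$ with $\Gamma=\Gamma_{A+\delta}$ and $n=n_{A+\delta}$. By the chain rule,
\[
\dot y_\alpha(t) = \dot\theta(t)\,\partial_\theta \Gamma + \dot s(t)\,\partial_s \Gamma.
\]
Since $\partial_s \Gamma = b(\Gamma,A+\delta)$ by definition of the chart, and $n$ is, by construction, the unit rotation of this tangent vector, one has $\la \partial_s\Gamma, n\ra = 0$. Projecting onto $n$ therefore isolates the $\theta$-component: $\la \dot y_\alpha, n\ra = \dot\theta\,\la \partial_\theta\Gamma, n\ra$. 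The remainder of the proof is to compute both sides of this identity.

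For the left-hand side I would exploit the affine dependence of the drift on the control: $b(y,\alpha)=b(y,A+\delta)+(\alpha-(A+\delta))Fy$. The first summand is again orthogonal to $n$, so
\[
\la \dot y_\alpha, n\ra = (\alpha-(A+\delta))\la F y_\alpha, n\ra.
\]
Unwinding $n=\Theta b/|\Theta b|$, using $\Theta^T=-\Theta$, and recognising $\varphi(y)=\la b(y,A+\delta),\Theta Fy\ra$ then reduces this to a factor proportional to $((A+\delta)-\alpha(t))\,\varphi(y_\alpha(t))$.

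The main calculation is the denominator. Set $J(\theta,s):=\la \partial_\theta \Gamma(\theta,s), n(\Gamma(\theta,s))\ra$. Differentiating along the characteristic using $\partial_s\partial_\theta\Gamma = Db(\Gamma)\,\partial_\theta \Gamma$ and the explicit $s$-derivative of $n(\Gamma)$, the expansion yields cross terms which one regroups around the matrix $Db(\Gamma)^T\Theta+\Theta Db(\Gamma)$. The key algebraic fact, genuinely two-dimensional, is that this matrix is skew-symmetric, hence proportional to $\Theta$ itself; this collapses everything into a scalar linear ODE
\[
\partial_s J = \omega(\theta,s)\,J,\qquad \omega(\theta,s)=\la n(\Gamma),\,Db(\Gamma)^T n(\Gamma)\ra.
\]
Integrating from $s=0$, where $\Gamma(\theta,0)=Y_\theta$, produces the exponential formula for $J$; in particular $J$ never vanishes, which is precisely what certifies that the parametrisation is a genuine local diffeomorphism.

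Combining the two computations by solving $\dot\theta = \la \dot y_\alpha, n\ra / J$ gives the stated expression. Boundedness of $\omega$ is immediate since $|n|=1$ and hence $|\omega|\le\|Db\|_\infty$. The delicate point I expect to be the main obstacle is the skew-symmetry identity used to close the ODE for $J$: it condenses the geometric intuition that the autonomous flow $\dot y = b(y,A+\delta)$ has non-crossing trajectories, and relies on the fact that in two dimensions every skew-symmetric form is a scalar multiple of $\Theta$—a simplification not available in higher dimension.
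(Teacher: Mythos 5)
Your proposal is correct and follows essentially the same route as the paper: project $\dot y_\alpha$ onto $n$ in the chart, use the affine structure $b(y,\alpha)=b(y,A+\delta)+(\alpha-(A+\delta))Fy$ to produce the factor $((A+\delta)-\alpha(t))\,\varphi(y_\alpha(t))$, and close a scalar linear ODE for $J=\la \partial_\theta\Gamma, n(\Gamma)\ra$ via the skew-symmetry of $Db(\Gamma)^T\Theta+\Theta Db(\Gamma)$ (hence proportionality to $\Theta$ in dimension two), yielding the exponential formula with $\omega=\la n, Db(\Gamma)^T n\ra$. This is exactly the paper's argument, including the identification of the two-dimensional skew-symmetry identity as the key step.
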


We build the same chart $(\theta_{a-\delta},s_{a-\delta})$ with the constant control $\alpha(t)\equiv a - \delta$,
\begin{equation} \label{eq:charta} 
 \Gamma_{a-\delta} \begin{cases} \frac{\partial}{\partial s} \Gamma(\theta,s) = b(\Gamma(\theta,s), a-\delta) \\ \Gamma(\theta,0) = Y_\theta  \in \partial\mS \end{cases}\, .
\end{equation} 
We have similarly
\begin{equation*} \dot \theta_{a-\delta}(t) =  ((a-\delta)-\alpha(t) ) \varphi(y_\alpha(t))  \exp\left(- \int_0^{s(t)} \omega(\theta,s')\, ds' \right) \la \frac{d}{d \theta} Y_\theta, n_{a-\delta}(Y_\theta) \ra^{-1}\, .  
\end{equation*}
%
%
We recall that we adopt a piecewise linear parametrization with the direct orientation for the boundary $\partial \mS$. Since the fields $b(y,\alpha)$ are all pointing inward the simplex $\mS$, this guarantees that we have in both cases ($A+\delta$ and $a-\delta$),
\[ \forall \theta\quad \la \dfrac{d}{d\theta} Y_\theta , n(Y_\theta) \ra \leq 0\, . \]

\begin{figure}
\begin{center}
\includegraphics[width = 0.46\linewidth]{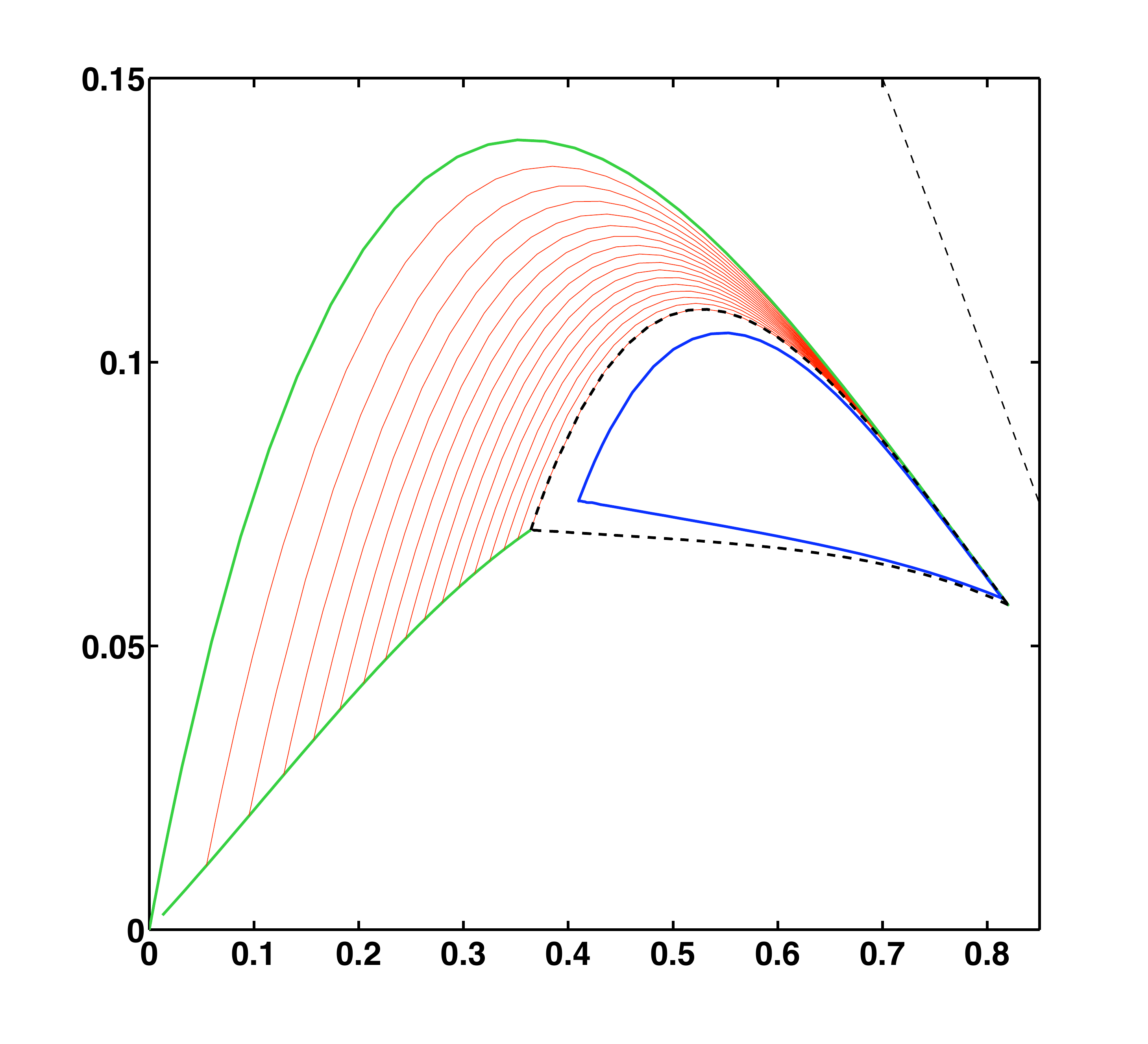}(a)\,\includegraphics[width = 0.46\linewidth]{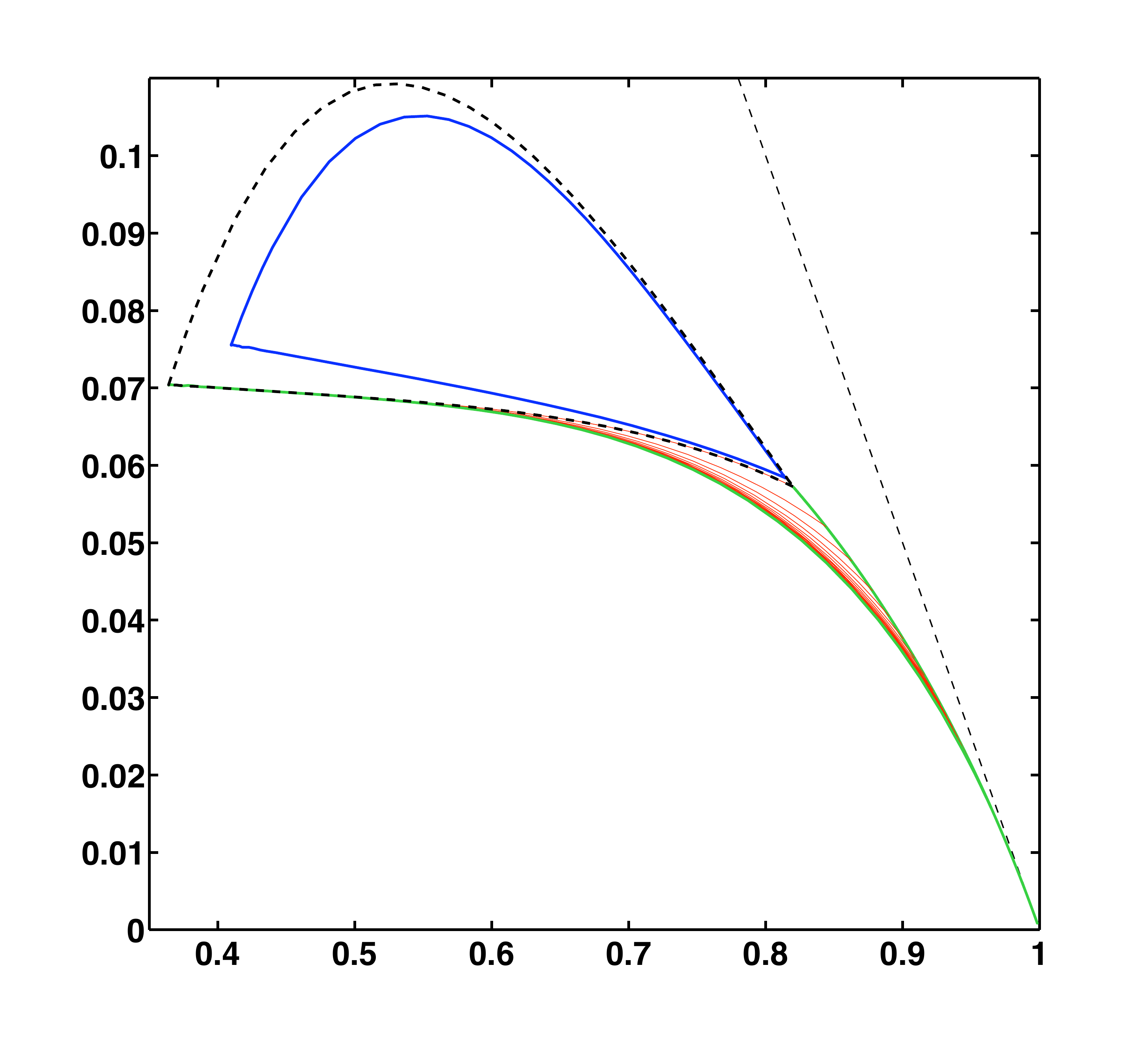}(b) 
\caption{Tunneling effect: the trajectories are trapped in the areas delimited by the green curves. They are forced to enter the approximated ergodic set $\mZ_{+\delta}$ due to the monotonicity formulas \eqref{eq:betaA}-\eqref{eq:betaa}. (a) The set $\mT_-\subset \Phi_-$; (b) The set $\mT_+\subset \Phi_+$.} \label{fig:tunneling}
\end{center}
\end{figure}

Similarly we build the second couple of charts, defined through the following families of curves, parametrized by the starting point $e_\beta \in \Phi_0$, see Figure \ref{fig:tunneling},
\begin{equation} \label{eq:chart-tunnel} 
\Gamma_{A+\delta}\begin{cases} \frac{\partial}{\partial s} \Gamma (\beta,s) = b(\Gamma (\beta,s), A+\delta) \\ \Gamma (\beta,0) = e_\beta  \in \Phi_0 \end{cases}\, , \quad \Gamma_{a-\delta}\begin{cases} \frac{\partial}{\partial s} \Gamma (\beta,s) = b(\Gamma (\beta,s), a-\delta) \\ \Gamma (\beta,0) = e_\beta  \in \Phi_0 \end{cases}\, .
\end{equation}
We obtain as in Lemma~\ref{lem:dotbeta},
\begin{align} 
\dot \beta_{A+\delta}(t) &=  ((A+\delta)-\alpha(t) ) \varphi(y_\alpha(t))  \exp\left(- \int_0^{s (t)} \omega(\beta ,s')\, ds' \right) \la \frac{d}{d \beta} e_\beta, n_{A+\delta}(e_\beta) \ra^{-1}\, ,  \label{eq:betaA} \\
\dot \beta_{a-\delta}(t) &=  ((a-\delta)-\alpha(t) ) \varphi(y_\alpha(t))  \exp\left(- \int_0^{s (t)} \omega(\beta ,s')\, ds' \right) \la \frac{d}{d \beta} e_\beta, n_{a-\delta}(e_\beta) \ra^{-1}\, .  \label{eq:betaa}
\end{align}
In this case, we have $b(e_\beta,A + \delta) = ((A + \delta) - \beta) F e_\beta$. Therefore,
\begin{equation} \forall \beta\quad \la \dfrac{d}{d\beta} e_\beta , n_{A+\delta}(e_\beta) \ra  = \sign((A + \delta) - \beta)\la \dfrac{d e_\beta}{d\beta}   , \dfrac{\Theta F e_\beta}{|\Theta F e_\beta|} \ra \, . \label{eq:tangentialA} \end{equation} 
Assumption {\bf (H4)}~\eqref{as:tech} allows to guarantee that this quantity has a constant sign for $\beta<A+\delta$.
Notice that when the charts are defined with the control $a - \delta$, the formula \eqref{eq:tangentialA} is replaced with the following:
\begin{equation*} \forall \beta\quad \la \dfrac{d}{d\beta} e_\beta , n_{a-\delta}(e_\beta) \ra  = \sign((a - \delta) - \beta)\la \dfrac{d e_\beta}{d\beta}  , \dfrac{\Theta F e_\beta}{|\Theta F e_\beta|} \ra \, . 
\end{equation*} 
Again this has constant sign for $\beta>a-\delta$.
Unfortunately the second set of charts does not cover the whole simplex $\mS$ (Figure \ref{fig:tunneling}). It is the reason why we introduce the transient set of charts parametrized by $Y_\theta \in \partial \mS$. We are particularly interested in the two tunnels $\mT_-,\mT_+$ defined by the following curves:
\begin{itemize}
\item the subset $\mT_-$ is defined by its boundary made of 3 pieces (Figure~\ref{fig:tunneling}a): the trajectory starting from the corner $e_0 = (0\; 0\; 1/3)$ with constant control $A+\delta$; the portion of $\Phi_0$ from $e_0$ to $e_{a-\delta}$; and the trajectory starting from $e_{a-\delta}$ with constant control $A+\delta$. We have $\mT_- \subset \Phi_-$, 
\item the subset $\mT_+$ is defined similarly (Figure~\ref{fig:tunneling}b): it is enclosed by the trajectory starting from the corner $e_\infty = (1\; 0\; 0)$ with constant control $a-\delta$; the portion of $\Phi_0$ from $e_{\infty}$ to $e_{A+\delta}$; and the trajectory starting from $e_{A+\delta}$ with constant control $a-\delta$. We have $\mT_+ \subset \Phi_+$.
\end{itemize}
By comparison arguments we get easily that the second set of charts covers the tunnel $\mT_-$ (resp. $\mT_+$) with the constant control $A+\delta$ (resp. $a-\delta$). 
The main feature of these two tunnels is the following: due to Hypothesis  {\bf (H4)}~\eqref{as:tech} and Lemma \ref{lem:dotbeta}, any trajectory that enters one of these tunnels can exit only by entering $\mZ_{+\delta}$. Moreover it follows monotonically the parametrization of the second chart \eqref{eq:betaA}-\eqref{eq:betaa}. This enables to prove that the trajectory eventually reaches the end of the tunnel: the ergodic set.


\paragraph{Step 5- Attractiveness of the ergodic set.} 
%

\begin{figure}
\begin{center}
\includegraphics[width = 0.46\linewidth]{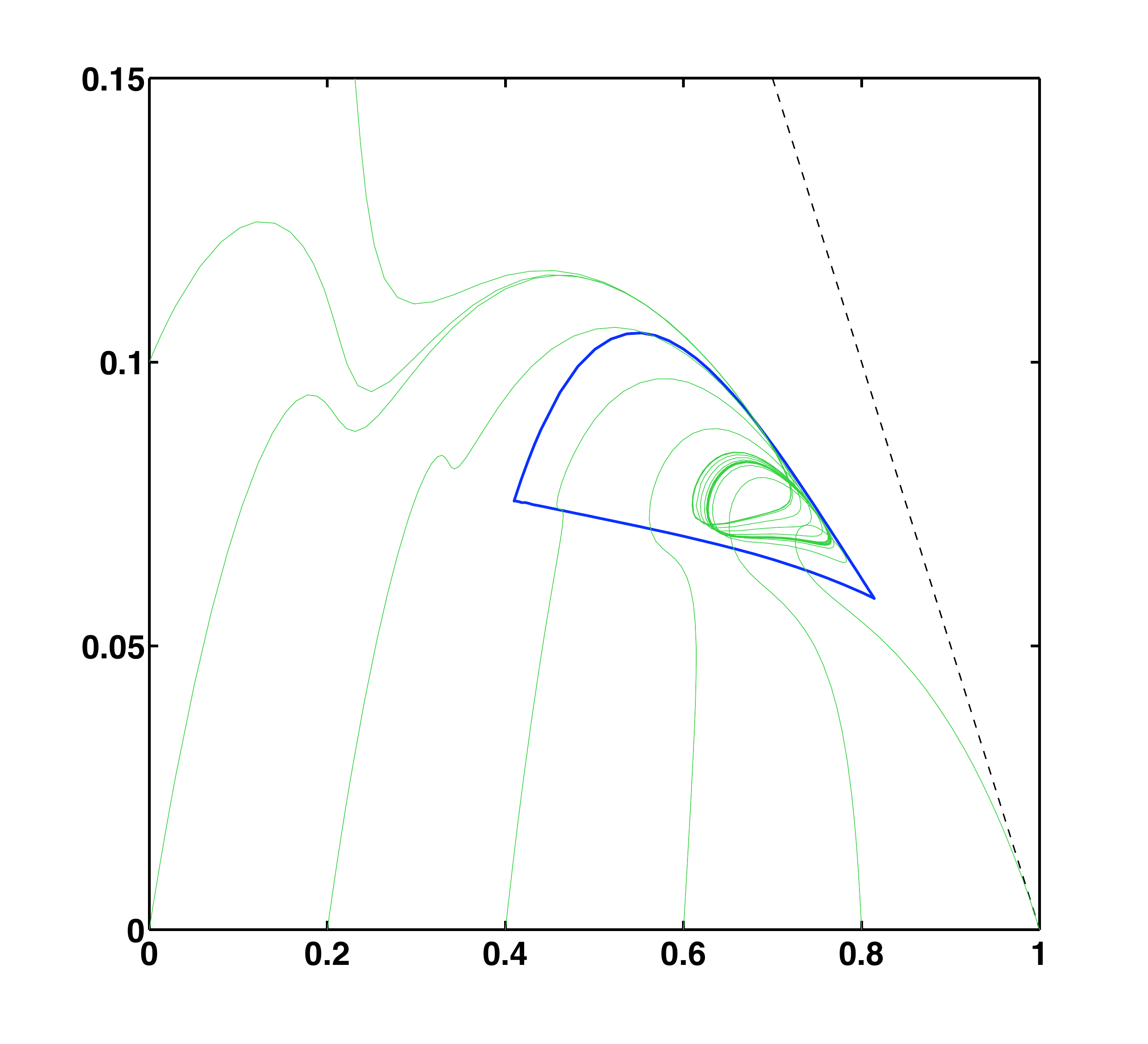}(a)\,\includegraphics[width = 0.46\linewidth]{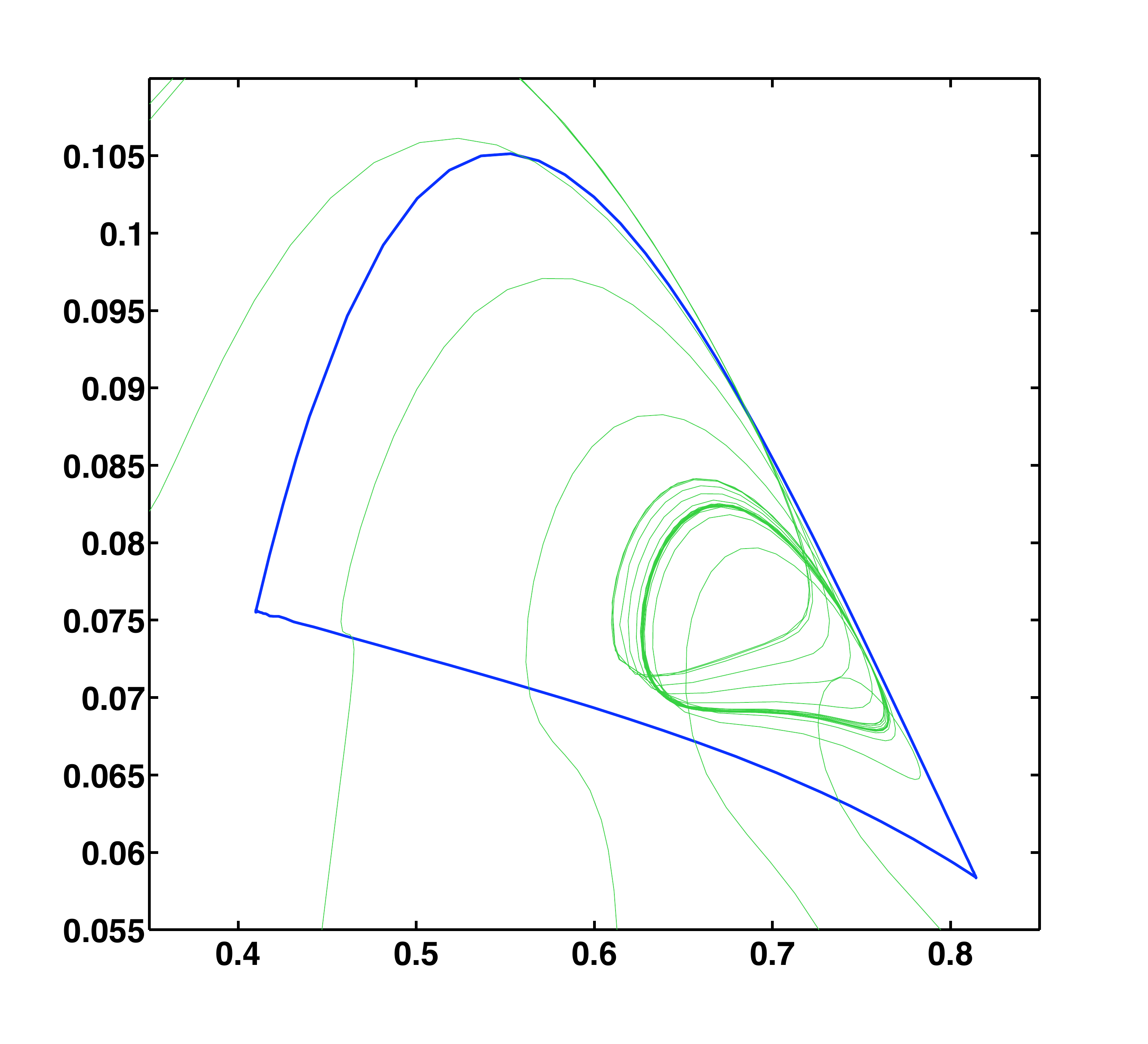}(b) 
\caption{Attractiveness and stability of the ergodic set $\mZ_0$. Several trajectories with arbitrary control $\alpha(t)$ (here, periodic) have been plotted. They all enter the ergodic set and stay there. The right figure is a zoom of the left one.}
\label{fig:attractiveness}
\end{center}
\end{figure}

We  state the main result of this step: attractiveness of the approximated ergodic set $\mZ_{+2\delta}$.

\begin{lemma}
The set $\mZ_{+2\delta}$ is attractive: any trajectory $y_\alpha(t)$ enters the set $\mZ_{+2\delta}$ after finite time, at most $T_0(\delta)$. The maximal time $T_0(\delta)$ does not depend on $\eps$.
\end{lemma}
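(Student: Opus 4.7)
The complement $\mS\setminus\mZ_{+\delta}$ is partitioned into the two tunnels $\mT_\pm$ and two ``outer'' pieces $\mathcal O_\pm\subset\Phi_\pm$. I will track a generic trajectory $y_\alpha(t)$ through this decomposition, using the monotonicity formulas of Step~4. While $y_\alpha(t)\in\mathcal O_\pm$, one of the chart parameters $\theta$ will be shown to be strictly monotone at a uniformly bounded-below rate, so the trajectory exits $\mathcal O_\pm$ in bounded time, entering either $\mZ_{+\delta}$ or a tunnel. Once inside a tunnel, the $\beta$-parameter of the second chart is monotone and the only possible exit is into $\mZ_{+\delta}$. Finally, once in $\mZ_{+\delta}$ the trajectory remains in $\mZ_{+\delta}\subset\mZ_{+2\delta}$ by stability (Lemma~\ref{lem:stability}(ii) applied verbatim to the enlarged set, since $[a,A]\subset[a-\delta,A+\delta]$).

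\textbf{Outer region.} On $\mathcal O_+\subset\Phi_+$ I apply the chart $\Gamma_{a-\delta}$ from \eqref{eq:charta}, for which Lemma~\ref{lem:dotbeta} yields
\[ \dot\theta_{a-\delta}(t)=((a-\delta)-\alpha(t))\,\varphi(y_\alpha(t))\,e^{-\int_0^{s(t)}\omega(\theta,s')\,ds'}\,\la\tfrac{d}{d\theta}Y_\theta,n_{a-\delta}(Y_\theta)\ra^{-1}. \]
The factor $(a-\delta)-\alpha(t)\leq-\delta$ is uniformly negative; $\varphi$ is positive on $\Phi_+$ and, by compactness, bounded below by some $\varphi_{\min}(\delta)>0$ on $\mathcal O_+$ (which has positive distance from $\Phi_0\subset\mZ_{+\delta}\cup\mT_+\cup\mT_-$); the exponential factor is bounded below because the chart coordinate $s$ is bounded on $\mathcal O_+$ (which is separated from the stationary point $e_{a-\delta}\in\mZ_{+\delta}$); and the last bracket is uniformly negative. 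Altogether $|\dot\theta_{a-\delta}|\geq c_1(\delta)$ on $\mathcal O_+$, so the trajectory exits $\mathcal O_+$ in time at most $T_1(\delta)$. The piece $\mathcal O_-\subset\Phi_-$ is treated symmetrically with the chart $\Gamma_{A+\delta}$.

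\textbf{Tunnel.} Inside $\mT_+$ I apply the second chart $\Gamma_{a-\delta}$ from \eqref{eq:chart-tunnel}, parametrized by $\beta\geq A+\delta$. Formula \eqref{eq:betaa} combined with Hypothesis {\bf (H4)} gives $\dot\beta_{a-\delta}$ of constant sign, with $|\dot\beta_{a-\delta}|$ bounded below on any compact subset of $\mT_+$ away from the stationary value. The boundary $\partial\mT_+$ has three pieces: the trajectories issued from $e_\infty$ and $e_{A+\delta}$ with constant control $a-\delta$, which cannot be crossed by uniqueness for the ODE; and the arc of $\Phi_0$ joining $e_\infty$ to $e_{A+\delta}$, which cannot be crossed either, since at $y=e_\beta$ on this arc we have $b(e_\beta,\alpha)=(\alpha-\beta)Fe_\beta$ with $\alpha\leq A<\beta$, and by the sign analysis of Lemma~\ref{lem:stability}(i) this vector points into $\mT_+\subset\Phi_+$ (it cannot exit toward $\Phi_-$). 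Hence the only possible exit is into $\mZ_{+\delta}$, reached in time at most $T_2(\delta)$ by monotonicity of $\beta$ and compactness. The tunnel $\mT_-$ is symmetric.

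\textbf{Conclusion and main obstacle.} Combining the above, any trajectory enters $\mZ_{+\delta}\subset\mZ_{+2\delta}$ in time $T_0(\delta):=T_1(\delta)+T_2(\delta)$, independent of $\alpha$, of the initial datum, and of $\eps$ (no $\eps$ enters the dynamical argument). The hard part is to secure uniform lower bounds on the monotonicity rates $|\dot\theta|,|\dot\beta|$ in spite of the degeneracies: vanishing of $\varphi$ on $\Phi_0$, slow convergence toward stationary eigenvectors, and unboundedness of the chart coordinate $s$ near those stationary points. The inclusion $\Phi_0\subset\mZ_{+\delta}\cup\mT_+\cup\mT_-$ and the location of $e_{a-\delta},e_{A+\delta}$ strictly inside $\mZ_{+\delta}$ (hence outside $\mathcal O_\pm\cup\mT_\pm$) are precisely what make the compactness argument go through; the buffer $\mZ_{+\delta}\subsetneq\mZ_{+2\delta}$ absorbs any remaining slack at the moment the trajectory crosses $\partial\mZ_{+\delta}\cap\Phi_0$.
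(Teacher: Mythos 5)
There is a genuine gap, and it sits exactly where the real work of this step lies. Your compactness claims about the outer regions are false: the closure of $\mathcal{O}_+=\Phi_+\setminus(\mZ_{+\delta}\cup\mT_+)$ contains the whole arc $\{e_\beta:\,0\le\beta\le a-\delta\}\subset\Phi_0$, because the inclusion $\Phi_0\subset\mZ_{+\delta}\cup\mT_+\cup\mT_-$ does not separate $\mathcal{O}_+$ from $\Phi_0$ — the tunnel $\mT_-$ lies on the \emph{other} side of $\Phi_0$, in $\Phi_-$. Hence $\inf_{\mathcal{O}_+}\varphi=0$ and there is no $\varphi_{\min}(\delta)>0$. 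Likewise $e_{a-\delta}$ and $e_{A+\delta}$ are not ``strictly inside $\mZ_{+\delta}$'': they lie on $\partial\mZ_{+\delta}$ and belong to the closures of $\mathcal{O}_+$ and $\mT_+$, so the chart coordinate $s$ is \emph{not} bounded there; this is precisely why the paper proves entry into $\mZ_{+2\delta}$ rather than $\mZ_{+\delta}$ (so that, on $\Phi_+\setminus\mZ_{+2\delta}$, $s$ stays bounded). The same problem kills your tunnel step: $\mT_+$ has an arc of $\Phi_0$ as part of its boundary, so $|\dot\beta_{a-\delta}|$ is not bounded below by ``compactness away from the stationary value''; a trajectory driven by an arbitrary measurable control could a priori hug $\Phi_0$ (where $\varphi=0$) or linger near $e_\infty$ (where, additionally, $\la Fe_\beta,\nabla\varphi(e_\beta)\ra\to0$ since $Fe_\infty=0$), and then no uniform $T_0(\delta)$ follows from monotonicity alone.

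The missing ingredients are the quantitative estimates that occupy most of the paper's Step 5. First, a second-order analysis of $\frac{d}{dt}\varphi(y_\alpha(t))$ near $\Phi_0$: by {\bf (H4)}, $\la Fe_\beta,\nabla\varphi(e_\beta)\ra<0$ for finite $\beta$, and since $\alpha(t)\in[a,A]$ one gets, in a narrow band of width $\delta'$ around the portion $\{e_\beta:\,A+2\delta\le\beta\le B\}$, that $\varphi$ increases at a uniform rate $\delta\nu$ (so trajectories in $\mT_+$ cannot stall near $\Phi_0$ and $\beta_{a-\delta}$ decreases uniformly), while around $\{e_\beta:\,0\le\beta\le a-2\delta\}$ the quantity $\varphi$ decreases uniformly, forcing trajectories in $\mathcal{O}_+$ that approach $\Phi_0$ to cross it in finite time and enter $\mT_-$, after which the tunnel argument is rerun with the $\Gamma_{A+\delta}$ chart. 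Second, a separate treatment near $e_\infty$, where even this fails: there $b(y,\alpha)=Ge_\infty+o(|y-e_\infty|)$ uniformly in $\alpha$, with $Ge_\infty\neq0$ by {\bf (H3)}, so the trajectory is expelled from a ball $\mB(e_\infty,\eta')$ in a time of order $\eta'$; since it may re-enter, one must count the passages $N$ through $\mB(e_\infty,\eta')$ using the monotone, uniformly bounded chart parameter $\theta_{a-\delta}$ (rate $\ge c_{\eta'}$ outside the narrow bands) to bound $N$ and hence the total exit time by $3\overline{\theta}/c_{\eta'}$. Your proposal correctly identifies the topological skeleton (outer pieces, tunnels, one-way crossings of $\Phi_0$, stability of the enlarged set), and even names the degeneracies in your last paragraph, but the assertion that the decomposition ``is precisely what makes the compactness argument go through'' is exactly what is not true; without the two estimates above, the uniform bound $T_0(\delta)$ is not established.
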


As this property holds true for any $\delta>0$ we can conclude that the set $\mZ_0$ is approximately attractive, but we get no estimate about the maximal time $T_0(\delta)$ as $\delta\to 0$. In fact we will prove in the next step that close-to-optimal trajectories enter the ergodic set $\mZ_0$ (and even the smaller set $\mZ_{-\delta}$) in finite time: this is a stronger property. 
The attractiveness property is illustrated in Figure \ref{fig:attractiveness}: trajectories necessarily enter the set $\mZ_{+2\delta}$. 

\begin{proof}
Let $y_\alpha(t)$ be any trajectory of the system $\dot y_\alpha(t) = b(y_\alpha(t),\alpha(t))$, where the control $\alpha(t)$ takes values in $[a,A]$.
We assume without loss of generality that $y_\alpha(0)\in \Phi_+\setminus \mZ_{+2\delta}$. We define $t_0$ as the first exit time:
\[t_0 = \inf_t \left\{ t \;|\; y_\alpha(t) \notin \Phi_+\setminus \mZ_{+2\delta}  \right\} \, . \]

We face the following alternative: either  $y_\alpha(0)\notin \mT_+$ then we shall use the first set of charts parametrized by $Y_\theta \in \partial\mS$ \eqref{eq:charta}, or $y_\alpha(0)\in \mT_+$ then we shall use the second set of charts parametrized by $e_\beta \in \Phi_0$ \eqref{eq:chart-tunnel}. We begin with the latter case.

The only way to exit the tunnel $\mT_+$ is to enter the set $\mZ_{+\delta}\subset \mZ_{+2\delta}$. The quantity $\beta_{a-\delta}(t)$ is monotonic. More precisely we have
\begin{equation*} 
\forall t\in [0,t_0) \quad \dot \beta_{a-\delta}(t) \leq - \delta    \varphi(y_\alpha(t))    \exp\left( - s(t) \|D_y b\|_\infty \right) \left|\la \frac{d e_\beta}{d \beta}  , \dfrac{\Theta F e_\beta}{|\Theta F e_\beta|} \ra\right|^{-1}
\, .
\end{equation*}
Recall that the time $s(t)$ is given by the parametrization of the trajectory $y_\alpha(t) = \Gamma_{a-\delta}(\beta_{a-\delta}(t),s(t))$. Hence $s(t)$ is uniformly bounded on $\Phi_+\setminus \mZ_{+2\delta}$. This is the reason why we use the set $\mZ_{+2\delta}$ rather than the set $\mZ_{+\delta}$ for which we could have  $s(t)\to +\infty$ because $e_{a-\delta}\in \partial \mZ_{+\delta}$.
Therefore the r.h.s of \eqref{eq:dissipation beta} can vanish only when $y_\alpha$ gets close to the set $\Phi_0$. 
Indeed the factor $\left|\la \frac{d e_\beta}{d \beta}  , \frac{\Theta F e_\beta}{|\Theta F e_\beta|} \ra\right|^{-1}$ is clearly bounded from below since $\left|\la \frac{d e_\beta}{d \beta}  , \frac{\Theta F e_\beta}{|\Theta F e_\beta|} \ra\right|$ is bounded from above.

We have a precise description of the dynamics close to $\Phi_0$:
\begin{align}
\dfrac d{dt} \varphi(y_\alpha(t)) & = \la \dot y_\alpha(t) , \nabla \varphi(y_\alpha(t)) \ra \nonumber\\
& = \la b(y_\alpha(t),\alpha(t)), \nabla \varphi(y_\alpha(t)) \ra   \nonumber\\
& = \la b(e_\beta,\alpha(t)), \nabla \varphi(e_\beta) \ra + o(|y_\alpha(t) - e_\beta|) \label{eq:linearization}\\ 
& = (\alpha(t) - \beta) \la Fe_\beta , \nabla \varphi(e_\beta) \ra + o(|y_\alpha(t) - e_\beta|)  \nonumber\, . 
\end{align} 
Hypothesis {\bf (H4)} guarantees that the quantity $\la Fe_\beta , \nabla \varphi(e_\beta) \ra$ is  negative for all finite $\beta$:
\[\forall \beta < +\infty \quad \la Fe_\beta , \nabla \varphi(e_\beta) \ra = \la Fe_\beta , -\chi_\beta \Theta \dfrac {d e_\beta}{d\beta} \ra = \chi_\beta\la \Theta Fe_\beta ,   \dfrac {d e_\beta}{d\beta} \ra < 0 \, , \]
where $\chi_\beta$ is a positive scalar (coming from the discrepancy between the two  vectors  pointing in the normal direction to $\Phi_0$: $\nabla \varphi(e_\beta)$ and $-\Theta \frac {d e_\beta}{d\beta}$). Unfortunately 
$\la b(e_\infty,\alpha(t)), \nabla \varphi(e_\infty) \ra$ may vanish (it is actually the case for the running example: the field $b(e_\infty,\alpha)$ is independent of $\alpha$ and it is tangent to $\Phi_0$). Therefore we have to isolate the large parameters $\beta\gg1$ (trajectory starting close to $e_\infty$). To circumvent this issue we investigate the behaviour of $y_\alpha(t)$ in the neighbourhood of $e_\infty$, say a ball $\mB(e_\infty,\eta')$. 
We have the following linearization around $e_\infty$,
\begin{equation}
b(y,\alpha) = b(e_\infty,\alpha) + o(|y- e_\infty|) = Ge_\infty + o(\eta')\, , 
\label{eq:linearization e_infty}
\end{equation}
since $Fe_\infty = 0$ (see Hypothesis {\bf (H2)} and subsequent discussion).
Moreover we have $Ge_\infty\neq 0$ (see Hypothesis {\bf (H3)} and subsequent discussion). The linearization is uniform with respect to $\alpha \in [a,A]$. Therefore the trajectory is pushed away from $e_\infty$ in finite time, uniformly with respect to the initial data $y_\alpha(0)\in \mB(e_\infty,\eta')$.

We choose $\eta'>0$ such that the higher-order terms in the linearization \eqref{eq:linearization e_infty} are negligible in front of $Ge_\infty$.  We choose the largest possible $B$ such that the portion $\{e_\beta: \beta \geq B \}\subset  \mB(e_\infty,\eta')$. There exists  $\nu >0$ depending on $B$ such that the following inequality holds,
\[\forall \beta \leq B \quad - \la Fe_\beta , \nabla \varphi(e_\beta) \ra > \nu  >0 \, . \] 
Accordingly we choose a small $\delta'>0$ such that the error term in the linearization \eqref{eq:linearization} is uniformly smaller than $\delta \nu $ for $|y_\alpha(t) - e_\beta|<\delta'$ and $\beta \leq B$. 
Thus we have
\begin{equation*}
\forall t\in [0,t_0)\quad \dfrac d{dt} \varphi(y_\alpha(t))  \geq   2 \delta \nu - \delta \nu = \delta \nu \, .
\end{equation*} 
This defines a narrow band of size $\delta'$ around the portion $\{e_\beta: A+2\delta\leq \beta \leq B \}\subset\Phi_0$ in which the quantity $\varphi(y_\alpha(t))$ is  increasing, uniformly in time. Therefore the trajectory stays away from $\Phi_0$. Consequently the quantity $ \beta_{a-\delta}(t)$ is decreasing, uniformly in time, and $\beta_{a-\delta}(t)$ gets smaller than $A+2\delta$ in finite time. This means that $y_\alpha(t)$ enters $\mZ_{+2\delta}$ in finite time when $y_\alpha(0)\in \mT_+$.

We now consider the case $y_\alpha(0)\notin \mT_+$.
We choose the parametrization \eqref{eq:charta} given by the charts associated to the constant control $a -\delta$, and the initial point $Y_\theta$ lying on $\partial \mS$. 
From \eqref{eq:betaA} we deduce that $\theta_{a-\delta}(t)$ is nonincreasing up to $t_0$: 
\begin{equation} \label{eq:dissipation beta} 
\forall t\in [0,t_0)\quad \dot \theta_{a-\delta}(t) \geq \delta   \varphi(y_\alpha(t))   \exp\left( - s(t) \|D_y b\|_\infty \right) \left|\la \frac{d}{d \theta} Y_\theta, n_{a-\delta}(Y_\theta) \ra\right|^{-1}
\, .
\end{equation}

The function $\theta_{a-\delta}(t)$ is naturally bounded for $t\in [0,t_0)$. In fact, the point $Y_{\theta}$ cannot make a complete turn around the boundary $\partial \mS$. For instance, when $Y_{\theta} = e_0$, we have necessarily $y_\alpha(t)\in \Phi_-$, since the trajectory $\gamma_{a-\delta}^0$ lies in $\Phi_-$. We denote by $\overline \theta$ a bound from above for $\theta_{a-\delta}(t)$.

Recall that the time $s(t)$ is uniformly bounded on $\Phi_+\setminus \mZ_{+2\delta}$. As previously the factor $\left|\la \frac{d}{d \theta} Y_\theta, n_{a-\delta}(Y_\theta) \ra\right|^{-1}$ is clearly bounded from below since $\left|\la \frac{d}{d \theta} Y_\theta, n_{a-\delta}(Y_\theta) \ra\right|$ is bounded from above. 

The descriptions of the dynamics close to $\Phi_0$ \eqref{eq:linearization} and $e_\infty$ \eqref{eq:linearization e_infty} are still valid. From \eqref{eq:linearization} we deduce that the following estimate holds true for $|y_\alpha(t) - e_\beta|<\delta'$ and $\beta\leq a-2\delta$,
\begin{equation*}
\dfrac d{dt} \varphi(y_\alpha(t))  \leq  - 2 \delta \nu + \delta \nu = - \delta \nu\, .
\end{equation*} 
This defines a narrow band of size $\delta'$ around the portion $\{e_\beta: 0\leq \beta \leq a-2\delta \}\subset\Phi_0$ in which the quantity $\varphi(y_\alpha(t))$ is uniformly  decreasing. Once it enters this narrow band, the trajectory necessarily crosses $\Phi_0$ in finite time. It leaves $\Phi_+$ and it enters the tunnel $\mT_-$.

On the other hand, we build an analogous narrow band around the portion $\{e_\beta: A+2\delta\leq \beta \leq +\infty  \}\subset\Phi_0$. It is the union of the tunnel   $\mT_+ $ and the ball $B(e_\infty,\eta')$. 

If $y_\alpha(0)$ starts within $\mB(e_\infty,\eta')$, the trajectory will leave the ball after a finite time of order $\tau'  = 2 |Ge_\infty| \eta'$. 
We cannot rule out the fact that it comes back inside the ball several times. 
Alternatively, we can evaluate the time the trajectory $y_\alpha$ spends inside the balls $\mB(e_\infty,\eta')$ and $\mB(e_\infty,2\eta')$ each time it travels through $\mB(e_\infty,\eta')$. Clearly the trajectory spends a time at most $\tau'  = 2 |Ge_\infty| \eta'$ inside the first ball, and a time at least $\tau'$ in the annulus $\mB(e_\infty,2\eta')\setminus \mB(e_\infty,\eta')$. Outside the narrow bands, the quantity $\theta_{a-\delta}$ is monotonic with the following estimate: 
\[ \dot \theta_{a-\delta}(t) \geq c_{\eta'}  \, , \]
for some constant $c_{\eta'}>0$. 
%
As a conclusion, if we denote by $N$ the number of times the trajectory $y_\alpha$ travels through $\mB(e_\infty,\eta')$, we have 
\[
\theta_{a-\delta}(t_0) - \theta_{a-\delta}(0)  \geq      \underbrace{ N\left (\tau' . 0 + \tau'  c_{\eta'}  \right)  }_{\mB(e_\infty,2\eta')} + \underbrace{(t_0  -   2N\tau') c_{\eta'}}_{\text{outside the narrow bands}}  \, .
\]
Therefore, the number $N$ is bounded. Moreover we have $N\tau' \leq  \frac{\overline{\theta}}{c_{\eta'} }$. Consequently, the time $t_0$ is also bounded, and we have 
\[ t_0 \leq   2N \tau' + \frac{\overline{\theta}}{c_{\eta'}} = 3 \frac{\overline{\theta}}{c_{\eta'} } \, . \]


We have finally proven that the trajectory $y_\alpha(t)$ leaves the set $\Phi_+\setminus \mZ_{+2\delta}$ after finite time. A precise follow-up of the time spent in this zone shows that it is uniformly bounded with respect to the initial data $y_\alpha(0)$. 
We now face the following alternative: either the trajectory enters the set $\mZ_{+2\delta}$ at time $t_0$, or it enters the set $\Phi_-\setminus \mZ_{+2\delta}$. In the former case there is nothing more to say. In the latter case, the trajectory crosses the line $\Phi_0$. It does so on the portion $\{e_{\beta}\;|\; 0\leq \beta\leq a-2\delta \}$ (otherwise the vector field $b(e_\beta,\alpha(t_0))$ points in the wrong direction, from $\Phi_-$ to $\Phi_+$, and the trajectory lies in the tunnel $\mT_+$). We conclude that the trajectory enters the tunnel $\mT_-$ at $t = t_0$.

From $t = t_0$ and $y_\alpha(t_0)\in \mT_-$ we repeat the same procedure as for $y_\alpha(0)\in \mT_+$, except that we change the control determining the charts: we switch from $\Gamma_{a-\delta}$ to $\Gamma_{A+\delta}$. Again, the quantity $\beta_{A+\delta}(t)$ is monotonic, and we have
\begin{equation*}
\forall t\geq t_0 \quad \dot \beta_{A+\delta}(t) \geq \delta   \left|\varphi(y_\alpha(t))\right|   \exp\left( - s(t) \|D_y b\|_\infty \right) \left|\la \frac{d e_\beta}{d \beta}  , \dfrac{\Theta F e_\beta}{|\Theta F e_\beta|} \ra\right|^{-1}
\, .
\end{equation*}
This quantity is positive, and we can prove as above that the quantity $\varphi(y_\alpha(t))$ is uniformly decreasing around the portion $\{e_\beta: 0\leq \beta \leq a-2\delta \}$.
We deduce as previously that the trajectory necessarily enters the set $\mZ_{+2\delta}$ in finite time $T_0(y)$. The time is uniformly bounded  with respect to the trajectory $y_\alpha$, independently of $\eps$. 
\end{proof}

\paragraph{Step 6- Close-to-optimal trajectories do not stay close to the boundary of the ergodic set.}

Let $T_0(\delta)$ be the maximal time of entry in the set $\mZ_{+2\delta}$ for the trajectory $\dot y_\alpha(t) = b(y_\alpha(t),\alpha(t))$. Let us mention that the set $\mZ_{+2\delta}$ is stable (The proof is similar to the stability of $\mZ_0$, see Lemma \ref{lem:stability}).  

The final step in the proof of Theorem \ref{th:eigHJ} consists in carefully following trajectories running in the layer $\Delta_\delta = \mZ_{+2\delta}\setminus \mZ_{-\delta}$. It is a narrow band surrounding the boundary of the ergodic set $\partial \mZ_0$.  Heuristically, if the trajectory stays in this band forever, then it follows closely the two trajectories defining $\partial \mZ_0$ \eqref{eq:def Z0}. Thus it spends most of the time in the neighbourhood of $e_a$ and $e_A$, where the reward values are respectively $\lambda(a)$ and $\lambda(A)$. From Hypothesis {\bf (H2)}, and particularly $\lambda(\alpha^*)> \max\left(\lambda(a),\lambda(A)\right) $, we deduce that such a trajectory is far from optimality.

In the next lines we develop this argumentation with quantitative bounds. We will pay much attention to proving that the maximal exit time $T_1(\eps,\delta)$ outside $\Delta_\delta$ is such that $\eps T_1(\eps,\delta) \ll 1$, uniformly. 

The width of the narrow band $\Delta_\delta = \mZ_{+2\delta}\setminus \mZ_{-\delta}$ is of order $\delta$. We introduce an intermediate scale $\eta$, with $\delta\ll \eta \ll 1$.
We denote $B_a = B(e_a;2\eta)$ and $B_A = B(e_A;2\eta)$ the balls of radius $\eta$ with respective centers $a,A$.
The scales $\delta$ and $\eta$ are to be determined later. Roughly speaking, the scale $\eta$ is determined such that $\eta \ll  \lambda(\alpha^*) -  \max\left(\lambda(a),\lambda(A)\right) $, and $\delta$ is adjusted accordingly.

Our claim is the following: if $\delta$ is chosen small enough, then close-to-optimal trajectories cannot stay in $\Delta_\delta$ forever. Let us mention that the set $\mZ_{-\delta}$ is not stable. However we require only that the trajectory enters once in $\mZ_{-\delta}$ where we can use the uniform controllability Lemma~\ref{lem:controllability}.

\begin{lemma}\label{lem:close-to-optimal}
Let $y_\alpha(t)$ be a close-to-optimal trajectory in the following sense: 
\[    \int_0^{+\infty} e^{-\eps t} L(y_\alpha(t))\, dt \geq   u_\eps(y) - O(1) \, .   \]
This trajectory must leave the narrow band $\Delta_\delta$ before some maximal time $T_1(\eps,\delta)$. In addition we have $\eps T_1(\eps,\delta) = O(\eta)$ as $\eps\to 0$, uniformly for $y\in \mS$.  
\end{lemma}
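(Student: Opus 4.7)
The plan is to combine a lower bound on $\eps u_\eps$ obtained from the constant test control $\alpha(t)\equiv\alpha^*$ with an upper bound on the discounted reward collected while $y_\alpha$ remains in $\Delta_\delta$, and then to exploit the strict gap $\kappa:=\lambda_P(\alpha^*)-\Lambda>0$ with $\Lambda:=\max(\lambda_P(a),\lambda_P(A))$, coming from Hypothesis \textbf{(H2)} and the inclusion $a<\alpha^*<A$. The scale $\eta$ will be chosen so small that $C\eta\ll\kappa$, where $C$ is a Lipschitz constant of $L$ at $e_a$ and $e_A$.

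First, I would establish the uniform lower bound $\eps u_\eps(y)\geq \lambda_P(\alpha^*)-o_\eps(1)$. Testing~\eqref{eq:infinite horizon} with $\alpha(t)\equiv\alpha^*$, the trajectory $y_{\alpha^*}(t)$ converges exponentially to $e_{\alpha^*}$ at rate $\mu_{\alpha^*}>0$, and $L(e_{\alpha^*})=\lambda_P(\alpha^*)$ follows from $m^T F=0$ together with the normalization $\la\m,e_{\alpha^*}\ra=1$. Combined with the close-to-optimality hypothesis, this gives $\eps\int_0^{\infty}e^{-\eps t}L(y_\alpha(t))\,dt\geq\lambda_P(\alpha^*)-o_\eps(1)$.

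Next, assuming $y_\alpha(t)\in\Delta_\delta$ throughout $[0,T]$, I would split $[0,T]=I_{\text{b}}\sqcup I_{\text{tr}}$ according to whether $y_\alpha(t)\in B_a\cup B_A$ or not. Continuity of $L$ yields $L\leq\Lambda+C\eta$ on $I_{\text{b}}$, while only the crude bound $L\leq\|L\|_\infty$ is available on $I_{\text{tr}}$; adding the tail estimate $\eps\int_T^{\infty}e^{-\eps t}L\,dt\leq\|L\|_\infty e^{-\eps T}$, one obtains
\[
\lambda_P(\alpha^*)-o_\eps(1)\leq(\Lambda+C\eta)(1-e^{-\eps T})+\|L\|_\infty e^{-\eps T}+\eps\|L\|_\infty|I_{\text{tr}}|.
\]
Once $|I_{\text{tr}}|$ is controlled by a constant depending only on $\eta$, a short computation combining this inequality with the assumption $\eta\ll\kappa$ and a small-$\eps T$ expansion produces the announced bound $\eps T_1=O(\eta)$.

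The main obstacle will be controlling $|I_{\text{tr}}|$ uniformly in $T$ and $\eps$. The key geometric input, specific to the scale separation $\delta\ll\eta$, is that $\Phi_0\cap\Delta_\delta\subset B_a\cup B_A$: indeed $\Phi_0\cap\Delta_\delta$ consists of eigenvectors $e_\beta$ with $\beta$ in two intervals of width $O(\delta)$ around $a$ and $A$, which fit inside the balls of radius $2\eta$. Thus a transit excursion stays on one side of $\Phi_0$, and the monotonicity formula of Lemma~\ref{lem:dotbeta} applies: $\theta_{A+\delta}$ evolves monotonically along excursions in $\Phi_+$ and $\theta_{a-\delta}$ along those in $\Phi_-$, with $|b|$ bounded uniformly from below on the compact transit region. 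Inside the balls, the $\beta$-charts of~\eqref{eq:chart-tunnel} provide the complementary monotonicity~\eqref{eq:betaA}--\eqref{eq:betaa}, preventing the $\theta$-coordinate from being reset by a ball visit. Piecing these two monotonicities together should bound the number of transit excursions by a constant $N(\eta)$ independent of $T$ and $\eps$, whence $|I_{\text{tr}}|\leq N(\eta)\tau_{\text{max}}(\eta)$ and $\eps|I_{\text{tr}}|=o_\eps(1)$. The subtlety is that without the inclusion $\Phi_0\cap\Delta_\delta\subset B_a\cup B_A$, a trajectory could in principle oscillate between the two balls arbitrarily many times by crossing $\Phi_0$ in the transit region itself; the scale separation $\delta\ll\eta$ is precisely what rules this out.
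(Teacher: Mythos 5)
Your opening step (the uniform lower bound $\eps u_\eps(y)\geq \lambda_P(\alpha^*)-o_\eps(1)$ via the constant control $\alpha^*$ and the spectral gap) is exactly the paper's first step. The core of your argument, however --- bounding the total transit time $|I_{\mathrm{tr}}|$ by bounding the number of transit excursions --- does not work. Within a single excursion on one side of $\Phi_0$ the chart coordinate of Lemma~\ref{lem:dotbeta} is indeed monotone, but each passage through $B_a$ or $B_A$ crosses $\Phi_0$, and there the factor $\varphi(y_\alpha(t))$ in \eqref{eq:betaA}--\eqref{eq:betaa} changes sign, so the coordinate reverses direction: there is no globally monotone quantity to ``piece together'', and ball visits do reset the $\theta$-coordinate. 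Concretely, a trajectory can circulate around the annular band $\Delta_\delta$ indefinitely (shadow $\gamma_A^a$ towards $e_A$ with control $A$, switch to $a$ near $e_A$ and shadow $\gamma_a^A$ back towards $e_a$, switch again, and so on), producing one fresh transit excursion per circuit, so neither $N$ nor $|I_{\mathrm{tr}}|$ is bounded independently of $T$. Worse, for a quickly circulating trajectory the \emph{fraction} of time spent in transit need not be small (a corridor takes at least a fixed time to traverse, while a ball of radius $2\eta$ can be crossed in time $O(\eta)$), so with only the crude bound $L\leq\|L\|_\infty$ on transits your displayed inequality yields no contradiction at all. This is precisely why the paper never tries to bound $|I_{\mathrm{tr}}|$: it proves instead, via the monotonicity formula (which gives $\int_{t_1}^{t_2}(A-\alpha(t))\,dt\leq 3\delta/c_\eta$ over one transit) and a Gronwall argument, the tracking estimate \eqref{eq:tracking}: during a transit the trajectory must stay $\eta$-close to $\gamma_A$ (resp.\ $\gamma_a$), whose reward converges exponentially to $\lambda(A)$ (resp.\ $\lambda(a)$). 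Hence every portion of time spent in $\Delta_\delta$, in transit or in a ball, contributes discounted reward at rate at most $\overline{\lambda}+O(\eta)$ up to an exponentially decaying transient, and these estimates telescope over arbitrarily many excursions, see \eqref{eq:t0-t1}--\eqref{eq:T1-T2}.

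There is a second gap at the end: even granting a uniform bound on $|I_{\mathrm{tr}}|$, your inequality only gives
\begin{equation*}
e^{-\eps T}\;\geq\;\frac{\lambda_P(\alpha^*)-\overline{\lambda}-C\eta-o_\eps(1)}{\|L\|_\infty-\overline{\lambda}-C\eta}\,,
\end{equation*}
i.e.\ $\eps T_1=O(1)$, not the claimed $O(\eta)$ --- and the $O(\eta)$ is what Step~7 actually needs, since $\eta$ is sent to $0$ there to obtain equicontinuity of $(\eps u_\eps)_\eps$. To upgrade the bound, the paper replaces the crude tail estimate $\|L\|_\infty e^{-\eps T_1}$ by $e^{-\eps T_1}\eps u_\eps(y_\alpha(T_1))$ through the dynamic programming principle, and then exploits the fact that the exit point $y_\alpha(T_1)$ lies in $\mZ_{-\delta}$: by the uniform controllability of Lemma~\ref{lem:controllability} one can connect $y$ to $y_\alpha(T_1)$ in a time $T_2(\delta)$ independent of $\eps$, whence $\eps u_\eps(y)\geq e^{-\eps T_2(\delta)}\eps u_\eps(y_\alpha(T_1))-O(\eps T_2(\delta))$, and only this comparison turns the logarithm in \eqref{eq:eps T1} into $O(\eta)$. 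Your outline contains no substitute for this step.
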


\begin{proof}
We begin with some considerations about optimality of trajectories. A first important remark is that for all $y\in \mS$ we have
$\liminf_{\eps\to 0}\eps u_\eps(y) \geq \lambda(\alpha^*)$, uniformly in $\mS$. Indeed we may choose the constant value control $\alpha(t)\equiv \alpha^*$ in the optimization problem \eqref{eq:infinite horizon}. We know that exponential convergence occurs towards the eigenvector $e_{\alpha^*}$: 
\[ | y_{\alpha^*}(t) - e_{\alpha^*}| \leq C |y -  e_{\alpha^*}|e^{-\mu_{\alpha^* }t}\, ,  \]
where $\mu_{\alpha^*}$ is the spectral gap and $C$  denotes some absolute constant. This yields the estimate
\begin{align*}
\eps \int_0^{+\infty} e^{-\eps t} L(y_{\alpha^*}(t))\, dt & = \eps \int_0^{+\infty} e^{-\eps t} L(e_{\alpha^*})\, dt +  \eps \int_0^{+\infty} e^{-\eps t} \left( L(y_{\alpha^*}(t)) - L(e_{\alpha^*})\right)\, dt  \\
& \geq \lambda(\alpha^*) -  \|D L\|_\infty \eps \int_0^{+\infty} e^{-\eps t} | y_{\alpha^*}(t) - e_{\alpha^*}| \, dt \\
& \geq \lambda(\alpha^*) -  C \|D L\|_\infty |y -  e_{\alpha^*}| \eps \int_0^{+\infty} e^{-\eps t}e^{-\mu_{\alpha^* }t} \, dt \\
& \geq \lambda(\alpha^*) -  C   \|D L\|_\infty \dfrac{\eps}{\eps + \mu_{\alpha^*}}\, .
\end{align*}
By letting $\eps \to 0 $ we deduce the estimate 
\begin{equation} \label{eq:lbHJ>Lb*}
\liminf_{\eps\to 0}\eps u_\eps(y) \geq \lambda(\alpha^*) \, , \quad \text{uniformly for $y\in \mS$} \, .
\end{equation}

Now we examine the reward of trajectories during their passage in $\Delta_\delta$. Let $y_\alpha$ be such that 
\[ \forall t\in (T_0(y), T_1(y)) \quad y_\alpha(t)\in \Delta_\delta\, ,\]
(note that we may {\em a priori} have $T_1(y) = +\infty$).
We distinguish between the different parts of $\Delta_\delta$. Either $y_\alpha(t)$ belongs to $\Delta_\delta\cap(B_a\cup B_A)$ or it belongs to the complementary set. We begin with the latter case. We assume that 
\[\forall t\in (t_1,t_2)\quad y_\alpha(t) \in \Delta_\delta\setminus(B_a\cup B_A)\, .\]
We first prove that, outside the balls $B_a$ and $B_A$, $y_\alpha(t)$ is close to either the curve $\gamma_a$ or the curve $\gamma_A$. We deal with the upper part $\Delta_\delta\cap\Phi_-$, where the curve $\gamma_A$ lies, without loss of generality. 
We use Lemma \ref{lem:dotbeta} for this purpose. We recall \eqref{eq:betaA} adapted to the constant value control $A$ (instead of $A + \delta)$:
\begin{equation*} \dot \beta_A(t) \geq   ( A -\alpha(t) ) \left|\varphi(y_\alpha(t))\right|  \exp\left( - s(t) \|D_y b\|_\infty \right) \left|\la \frac{d e_\beta}{d \beta}  , \dfrac{\Theta F e_\beta}{|\Theta F e_\beta|} \ra\right|^{-1}\, .  \end{equation*}
Outside the balls $B_a$ and $B_A$, the time $s(t)$ is uniformly bounded, and $|\varphi|$ is bounded from below. We have 
\[ \left|\varphi(y_\alpha(t))\right|  \exp\left( - s(t)  \|D_y b\|_\infty  \right) \left|\la \frac{d e_\beta}{d \beta}  , \dfrac{\Theta F e_\beta}{|\Theta F e_\beta|} \ra\right|^{-1} \geq c_\eta\, ,  \]
for some positive constant $c_\eta$.
Thus, 
\begin{equation*}
   c_\eta \int_{t_1}^{t_2}(A - \alpha(t)) \, dt \leq \beta(t_2) - \beta(t_1) \leq  3 \delta \, .
\end{equation*}
We deduce that 
\begin{align*}
\dfrac d{dt} | \gamma_A(t+\tau) - y_\alpha(t) |^2 & = 2 \la b(\gamma_A(t+\tau),A) - b(y_\alpha(t),\alpha(t)),  \gamma_A(t+\tau) - y_\alpha(t)  \ra\\
& = 2 \la b(\gamma_A(t+\tau),\alpha(t)) - b(y_\alpha(t),\alpha(t)),  \gamma_A(t+\tau) - y_\alpha(t)  \ra 
\\ & \quad + 2 (A - \alpha(t)) \la F\gamma_A(t+\tau),  \gamma_A(t+\tau) - y_\alpha(t)  \ra  \\
& \leq 2 \|D_y b\|   |\gamma_A(t+\tau)- y_\alpha(t) |^2 + C (A - \alpha(t)) \, ,
\end{align*}
where $C$ denotes some absolute constant.
Therefore, 
\begin{equation*} 
| \gamma_A(t+\tau) - y_\alpha(t) |^2 
\leq  e^{2\|D_y b\|(t-t_1)}| \gamma_A(t_1+\tau) - y_\alpha(t_1)|^2 + C   \int_{ t_1}^t e^{2 \|D_y b\|(t'-t)} (A - \alpha(t'))  \, dt'\, .
\end{equation*}
We choose a suitable time shift $\tau$ such that $\gamma_A(t_1+\tau) - y_\alpha(t_1) = O(\delta)$. This yields the estimate
\begin{equation}\label{eq:tracking} 
\forall t\in (t_1,t_2)\quad | \gamma_A(t+\tau) - y_\alpha(t) |^2 
\leq   C \delta^2 e^{2\|D_y b\|(t-t_1)}  + C \dfrac{\delta}{c_\eta}\, .
\end{equation}
Because it is a converging trajectory, we know that $\gamma_A$ spends at most a time $T_\eta$ in the zone outside the ball $ B(e_A, \eta)$. 
We choose $\delta$ so small that the following inequality holds,
\begin{equation*}
C \delta^2  e^{2\|D_y b\|T_\eta}  +   C \dfrac{\delta}{c_\eta} <  \eta^2  \, .\end{equation*}
From  the tracking estimate \eqref{eq:tracking} we deduce that during the time $t\in (t_1,t_1 + T_\eta)$ we have the following estimate, 
\[  \sup_{t\in (t_1,t_1 + T_\eta)} |y_\alpha(t) - \gamma_A(t+\tau)|^2 \leq C \delta^2  e^{2\|D_y b\|T_\eta}  +   C \dfrac{\delta}{c_\eta} <  \eta^2  \, . \]
Therefore we have $y_\alpha(t_1+T_\eta) \in B_A = B(e_A,2 \eta)$. And so we have $t_2 \leq t_1 + T_\eta$ (but this estimate is inessential). 

Concerning the trajectory $\gamma_A$, exponential convergence  towards the eigenvector $e_A$ holds true:
\begin{equation} \label{eq:spectral gap} \forall t>0\quad| \gamma_A(t+\tau) - e_A|  \leq  C | \gamma_A(\tau) - e_A| e^{-\mu_A t}\, ,  \end{equation}
where $\mu_A>0$ is the spectral gap, and $C$  denotes some absolute constant. In fact we replace $\mu_A$ by $\underline{\mu} = \min(\mu_a,\mu_A)$ in \eqref{eq:spectral gap} for the sake of convenience.
Plugging this estimate into the optimal reward, we get
\begin{align*} 
\eps \int_{t_1}^{t_2} e^{-\eps t'} L(y_\alpha(t'))\, dt' 
& = \eps \int_{t_1}^{t_2} e^{-\eps t'}  L(\gamma_A(t'+\tau)) \, dt' + \eps \int_{t_1}^{t_2} e^{-\eps t'} \left[ L(y_\alpha(t')) - L(\gamma_A(t'+\tau))\right]\, dt' \\
& \leq \eps \int_{t_1}^{t_2} e^{-\eps t'}  L(e_A) \, dt' +  \eps \int_{t_1}^{t_2} e^{-\eps t'} \left[ L(\gamma_A(t'+\tau)) - L(e_A)\right]\, dt' \\
& \quad + \eta \|DL\|_\infty \left( e^{-\eps t_1} -e^{-\eps t_2} \right) \\
& \leq \lambda(A) \left( e^{-\eps t_1} - e^{-\eps t_2} \right) +  C \|DL\|_\infty \dfrac{\eps}{\eps + \underline{\mu}} \left( e^{-(\eps + \underline{\mu}) t_1} - e^{-(\eps + \underline{\mu}) t_2} \right) \\
& \quad + \eta \|DL\|_\infty \left( e^{-\eps t_1} -e^{-\eps t_2} \right) \,.
\end{align*}
We conclude that there exists some absolute constant $C$ such that
\begin{equation}\label{eq:t0-t1}
\eps \int_{t_1}^{t_2} e^{-\eps t'} L(y_\alpha(t'))\, dt' \leq
 \overline{\lambda} \left( e^{-\eps t_1} - e^{-\eps t_2} \right)   + \eps C \left( e^{-(\eps + \underline{\mu}) t_1} - e^{-(\eps + \underline{\mu}) t_2} \right) + \eta C\left( e^{-\eps t_1} -e^{-\eps t_2} \right) \, ,
\end{equation}
where we have introduced the notation: $\overline{\lambda}  = \max(\lambda(a),\lambda(A))$.

We now turn to the second (easier) case:
\[\forall t\in (t_2,t_3)\quad y_\alpha(t) \in \Delta_\delta\cap(B_a\cup B_A)\, .\]
We assume that $y_\alpha(t)$ lies inside the ball $B_A$, without loss of generality. We directly have:
\begin{align*} 
\eps \int_{t_2}^{t_3} e^{-\eps t'} L(y_\alpha(t'))\, dt' 
& = \eps \int_{t_2}^{t_3} e^{-\eps t'}  L(e_A) \, dt' + \eps \int_{t_2}^{t_3} e^{-\eps t'} \left[ L(y_\alpha(t')) - L(e_A)\right]\, dt'   \\
& \leq \lambda(A) \left( e^{-\eps t_2} - e^{-\eps t_3} \right) +  \eta \|DL\|_\infty \left( e^{-\eps t_2} -e^{-\eps t_3} \right) \, .
\end{align*}
We conclude that there exists  some absolute constant $C$ such that we also have
\begin{equation}\label{eq:t1-t2}
\eps \int_{t_2}^{t_3} e^{-\eps t'} L(y_\alpha(t'))\, dt' \leq
 \overline{\lambda} \left( e^{-\eps t_2} - e^{-\eps t_3} \right)
   + \eps C \left( e^{-(\eps + \underline{\mu}) t_2} - e^{-(\eps + \underline{\mu}) t_3} \right) + \eta C\left( e^{-\eps t_2} -e^{-\eps t_3} \right) \, .
\end{equation}

All in all we add successively estimations \eqref{eq:t0-t1} and \eqref{eq:t1-t2}. Using telescopic cancellations, we end up with
\begin{equation}\label{eq:T1-T2}
\eps \int_{T_0}^{T_1} e^{-\eps t'} L(y_\alpha(t'))\, dt' \leq
 \overline{\lambda} \left( e^{-\eps T_0} - e^{-\eps T_1} \right)
   + \eps C \left( e^{-(\eps + \underline{\mu}) T_0} - e^{-(\eps + \underline{\mu}) T_1} \right) + \eta C\left( e^{-\eps T_0} -e^{-\eps T_1} \right) \, .
\end{equation}
This proves that, for $\eps$ and $\eta$ small enough, we necessarily have $T_1(y)<+\infty$. Otherwise the trajectory would not be optimal thanks to the estimate \eqref{eq:lbHJ>Lb*}, and also the trivial bound: 
\[\eps \int_{0}^{T_0} e^{-\eps t'} L(y_\alpha(t'))\, dt' \leq  \eps T_0(\delta) \|L\|_\infty\, .\]

This prove that close-to-optimal trajectories necessarily enter the set $\mZ_{-\delta}$ before some maximal time $T_1(\eps,\delta)$.


In fact we can estimate better the maximal time $T_1(\eps,\delta)$, and prove that $\eps T_1(\eps,\delta)= O(\eta)$ as $\eps\to 0$. Let $y_\alpha$ be a close-to-optimal trajectory, starting from $y\in \mS$. Following \eqref{eq:T1-T2} we have 
\begin{align*}
\eps\left( u_\eps(y) - O(1)\right) & \leq \eps \int_0^{T_1} e^{-\eps t'} L(y_\alpha(t'))\, dt' + \eps e^{-\eps T_1}   u_\eps (y_\alpha(T_1))\\
& \leq \eps\int_0^{T_0} e^{-\eps t'} L(y_\alpha(t'))\, dt' + \eps\int_{T_0}^{T_1} e^{-\eps t'} L(y_\alpha(t'))\, dt' + e^{-\eps T_1(y)}  \eps u_\eps (y_\alpha(T_1)) \\
& \leq \eps T_0(\delta) \|L\|_\infty + \overline{\lambda} \left( 1 - e^{-\eps T_1(y)} \right) + O(\eps) + O(\eta) + e^{-\eps T_1(y)}  \eps u_\eps (y_\alpha(T_1)) \, .
\end{align*}
We deduce
\[ e^{-\eps T_1(y)} \left( \eps u_\eps (y_\alpha(T_1)) -  \overline{\lambda} \right) \geq \eps u_\eps(y) -  \overline{\lambda} + O(\eps T_0(\delta)) + O(\eps) + O(\eta)\, .  \]
Therefore we have
\begin{equation}\label{eq:eps T1} \eps T_1(y) \leq \log\left( \dfrac{\eps u_\eps (y_\alpha(T_1)) -  \overline{\lambda}}{\eps u_\eps(y) -  \overline{\lambda} + O(\eps T_0(\delta)) + O(\eps) + O(\eta)} \right) \, .\end{equation}
We strongly use the fact that $y_\alpha(T_1)\in \mZ_{-\delta}$. There exists a time $T_2(\delta)$ (not depending on $\eps$) such that we can send $y$ onto $y_\alpha(T_1)$ within time $T_2(\delta)$. For this, simply send $y$ onto a point $y'\in \mZ_{-\delta}$ (for example with constant control $\alpha(t)\equiv \alpha^*$), then connect $y'$ to $y_\alpha(T_1)$ thanks to Lemma \ref{lem:controllability}. Notice that the time $T_2(\delta)$ may degenerate as $\eta$ (and thus $\delta$) goes to $0$. 
Using the dynamic programming principle we get,
\begin{equation*}
\eps u_\eps(y) \geq \eps \int_0^{T_2} e^{-\eps t'} L(y_\alpha(t'))\, dt' + \eps e^{-\eps T_2(\delta)}   u_\eps (y_\alpha(T_1))\,.
\end{equation*}
Plugging this estimate in \eqref{eq:eps T1}, we conclude
\begin{align*}
\eps T_1(y) &\leq \log\left( \dfrac{\eps u_\eps (y_\alpha(T_1)) -  \overline{\lambda}}{ e^{-\eps T_2(\delta)} \eps   u_\eps (y_\alpha(T_1)) -  \overline{\lambda} + O(\eps T_2(\delta))+ O(\eps T_0(\delta)) + O(\eps) + O(\eta)} \right) \\
& \leq - \log\left(  1 + \dfrac{ \left(e^{-\eps T_2(\delta)} - 1\right) \eps   u_\eps (y_\alpha(T_1))  + O(\eps T_2(\delta))+ O(\eps T_0(\delta)) + O(\eps) + O(\eta)}{\eps u_\eps (y_\alpha(T_1)) -  \overline{\lambda}} \right)
\, .  
\end{align*}
Finally, we obtain the estimate
\[ \eps T_1(y) = O(\eta) \quad \text{as $\eps \to 0$}\, , \quad \text{uniformly for $y\in \mS$}\, . \]
This concludes the proof of Lemma \ref{lem:close-to-optimal}.
\end{proof}

\paragraph{7- Conclusion.}
We are now ready to prove Theorem \ref{th:eigHJ}.

\begin{proof}[Proof of Theorem \ref{th:eigHJ}]
First, the function $\eps u_\eps(y)$ is uniformly bounded on the simplex $\mS$: $\|\eps u_\eps \|_\infty \leq \|L\|_\infty$ \eqref{eq:bound L}. Second, we show equicontinuity of the family of functions $(\eps u_\eps)_\eps$. Let $y,y'\in \mS$. We assume without loss of generality that $y$ and $y'$ lie outside the ergodic set $\mZ_0$. From Steps 5 and 6 we deduce that the trajectories $y_\alpha(t)$ and $y'_\alpha(t)$ enter the approximated ergodic set $\mZ_{-\delta}$ within  times $T_1(y)$ and $T_1(y')$ respectively, such that $\eps T_1 = O(\eta)$. From Step 2 we can connect $z = y_\alpha(T_1(y))$ and $z' = y'_\alpha(T_1(y'))$ within time $T_2(\delta)$ which is independent of $\eps$. From the  dynamic programming principle, we have
\begin{align*} \eps u_\eps (y') -  \eps u_\eps (y) 
&\leq \eps \int_0^{T_1(y')} e^{-\eps t} L(y'_\alpha(t))\, dt  + \eps e^{-\eps T_1(y')} u_\eps(z') \\
&\hspace{20mm} - \eps \int_0^{T_1(y)} e^{-\eps t} L(y_\alpha(t))\, dt - \eps e^{-\eps T_1(y)} u_\eps(z) + O(\eps) \\
& \leq \eps u_\eps(z') - \eps u_\eps(z) + 2 \eps T_1(y') \|L\|_\infty   + 2 \eps T_1(y) \|L\|_\infty +O(\eps)
\\
& \leq   O( \eps T_2(\delta))  + O(\eps T_1 ) + O(\eps)
\, .    \end{align*}
The last estimate was obtained at Step 3 (controllability argument). We end up with
\[ \eps u_\eps (y') -  \eps u_\eps (y) 
\leq O( \eps T_2(\delta))  + O(\eta ) + O(\eps)\, . \]
Exchanging the roles of $y$ and $y'$ we obtain the uniform bound,
\begin{equation*} 
\forall (y,y')\in \mS\times \mS \quad |\eps u_\eps (y') - \eps u_\eps (y)| \leq O(\eta) \quad \mbox{as $\eps\to 0$}\, . \end{equation*}
Since the parameter $\eta$ can be chosen arbitrarily small, we get that the family $(\eps u_\eps)_\eps$ converges uniformly towards some constant (up to extraction). In fact the limit is unique \cite{Bardi-Capuzzo,Arisawa2}.
we re-do the proof of this statement for the sake of completeness.

Assume there exist two constant values $l_1<l_2$ and to subsequences $(u_{\eps_1}), (u_{\eps_2})$ such that $\lim_{\eps_1\to 0} \eps_1 u_{\eps_1} = l_1$ and  $\lim_{\eps_2\to 0} \eps_2 u_{\eps_2} = l_2$ uniformly in $\mS$. We recall that $-u_\eps$ is the viscosity solution of the stationary Hamilton-Jacobi equation \eqref{eq:HJ stat}. Since the convergence is uniform, we can find $\nu,\eps_1$ and $\eps_2$ small enough such that $-\eps_1 u_{\eps_1}\geq -l_1 - \nu$, and $-\eps_2 u_{\eps_2}\leq -l_2 + \nu < -l_1 - \nu$. Therefore,  $-u_{\eps_1}$ is a viscosity subsolution of 
\begin{equation}\label{eq:eps1} -l_1 - \nu + H(y,D_y u_{\eps_1}) \leq 0\,,\end{equation}
and  $-u_{\eps_2}$ is a viscosity supersolution of the same equation,
\begin{equation}\label{eq:eps2}  -l_1 - \nu + H(y,D_y u_{\eps_2}) \geq  0 \, . \end{equation} 
We deduce from standard comparison theorems \cite{Bardi-Capuzzo,CDL} that $\forall y\in \mS\; - u_{\eps_1}(y) \leq - u_{\eps_2}(y)$. However we can add to $u_{\eps_2}$ a large positive constant such that $u_{\eps_2} > u_{\eps_1}$, and \eqref{eq:eps1}--\eqref{eq:eps2} are still verified. This is a contradiction.

We conclude that there is a unique possible limit for the sequence $(\eps u_{\eps})_\eps$. This completes the proof of Theorem \ref{th:eigHJ}.
\end{proof}

\subsection{Proof of Corollary \ref{cor:ergodic}}

The proof of Corollary \ref{cor:ergodic} following Theorem \ref{th:eigHJ} is contained in \cite[Theorem 5]{Arisawa2}. We repeat the argument for the sake of completeness. We fix $\delta>0$, and we choose $T = \frac \delta \eps$. We split the rewards as follows:
\begin{equation}\label{eq:cor}
\eps u_\eps(y) = \sup_{\alpha} \left\{ \eps\int_0^{T} L(y_\alpha(t))\, dt + \eps \int_0^{T} \left( e^{-\eps t} - 1 \right)L(y_\alpha(t))\, dt  + \eps e^{-\eps T}   u_\eps(y_\alpha(T)) \right\}\, .
\end{equation}
We have for the second contribution,
\begin{equation*}
\left| \eps \int_0^{T} \left( e^{-\eps t} - 1 \right)L(y_\alpha(t))\, dt \right| \leq \|L\|_\infty \left( e^{-\delta } - 1 + \delta \right)\, .
\end{equation*}
Therefore, dividing \eqref{eq:cor} by $\delta$ we get as $\eps\to 0$ (or equivalently $T\to +\infty$),
\begin{align*}
& \liminf_{T\to +\infty} \dfrac{1 }{T} \sup_{\alpha} \left\{ \int_0^{T} L(y_\alpha(t))\, dt \right\} +  \dfrac{\left( e^{-\delta} - 1\right)}\delta \lambda_{HJ} = O(\delta)\, , \\
& \limsup_{T\to +\infty} \dfrac{1 }{T} \sup_{\alpha} \left\{ \int_0^{T} L(y_\alpha(t))\, dt \right\} +  \dfrac{\left( e^{-\delta} - 1\right)}\delta \lambda_{HJ} = O(\delta)\, ,
\end{align*}
where we have used the uniform convergence $\eps u_\eps \to \lambda_{HJ}$ in $\mS$. Since $\delta>0$ can be chosen arbitrarily small, Corollary \ref{cor:ergodic} is proven.

\section{Numerical simulations and perspectives}

\label{sec:num}

\begin{figure}
\begin{center}
\includegraphics[width = 0.6\linewidth]{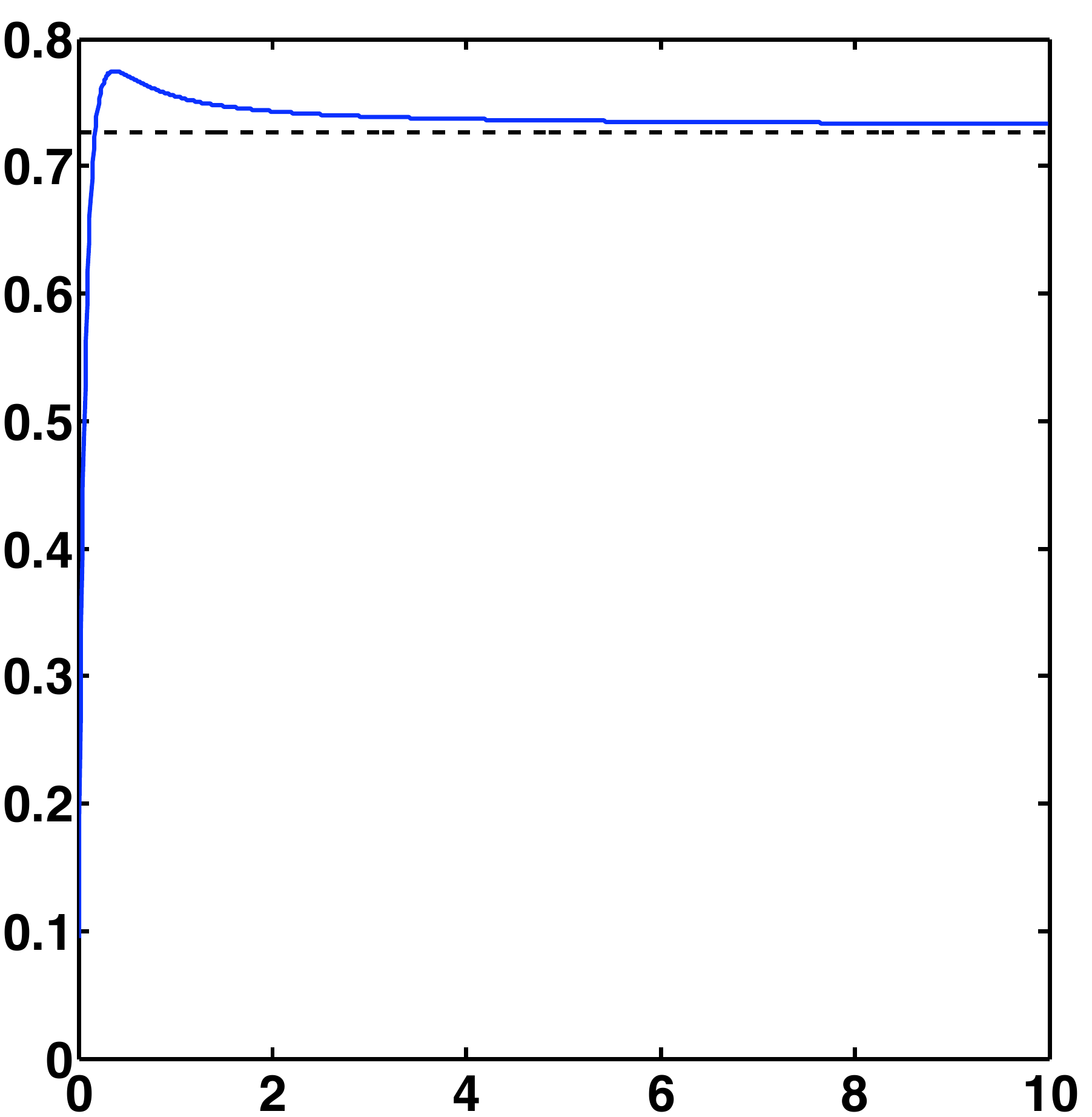}
\caption{Convergence of $\frac{u(T,y)}{T}$ towards a constant value $\lambda_{HJ}$. We have plotted a specific value at an arbitrary point $y_0\in \mS$.
We observe that the limiting value $\lambda_{HJ}$ is close to the maximal Perron eigenvalue $\lambda_P\approx 0.7273$ (dashed line, see also Figure \ref{fig:Perron}).
The discrepancy between the limiting value and $\lambda_P$ (dashed line) falls below the numerical error due to the scheme, which is of order one.} \label{fig:eigenvalue}
\end{center}
\end{figure}

\begin{figure}
\begin{center}
\includegraphics[width = 0.46\linewidth]{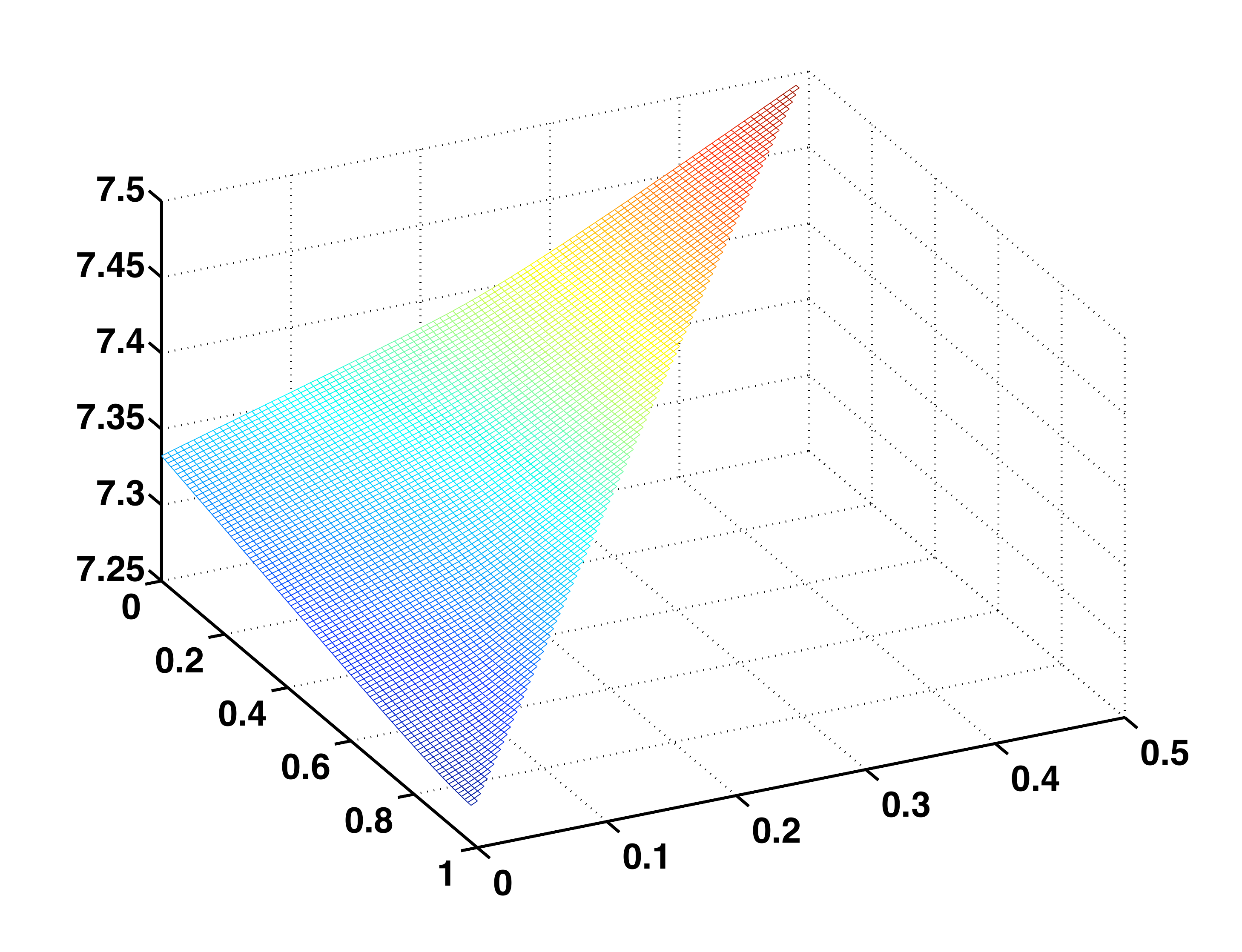}(a)\,\includegraphics[width = 0.46\linewidth]{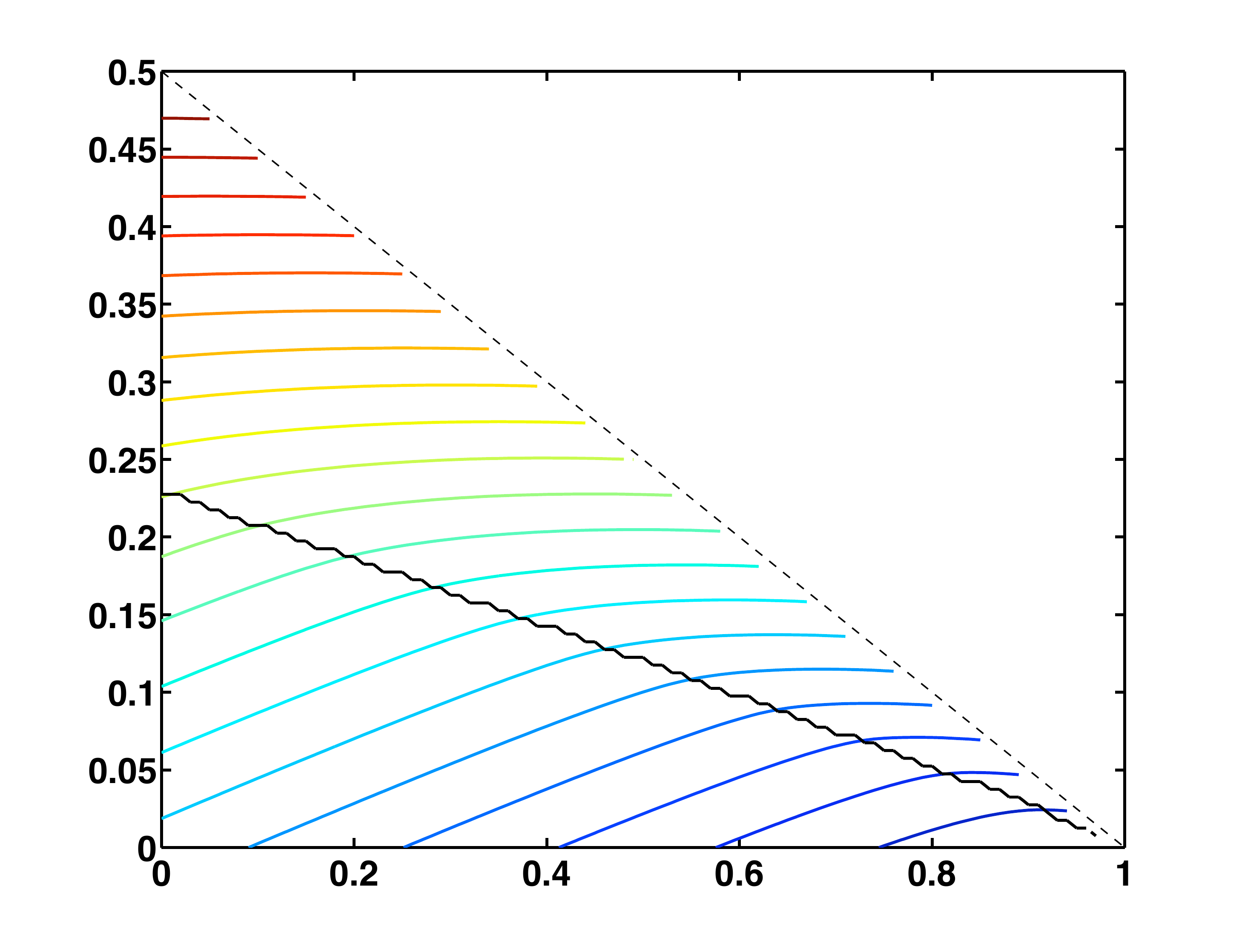}(b) \\
\includegraphics[width = 0.46\linewidth]{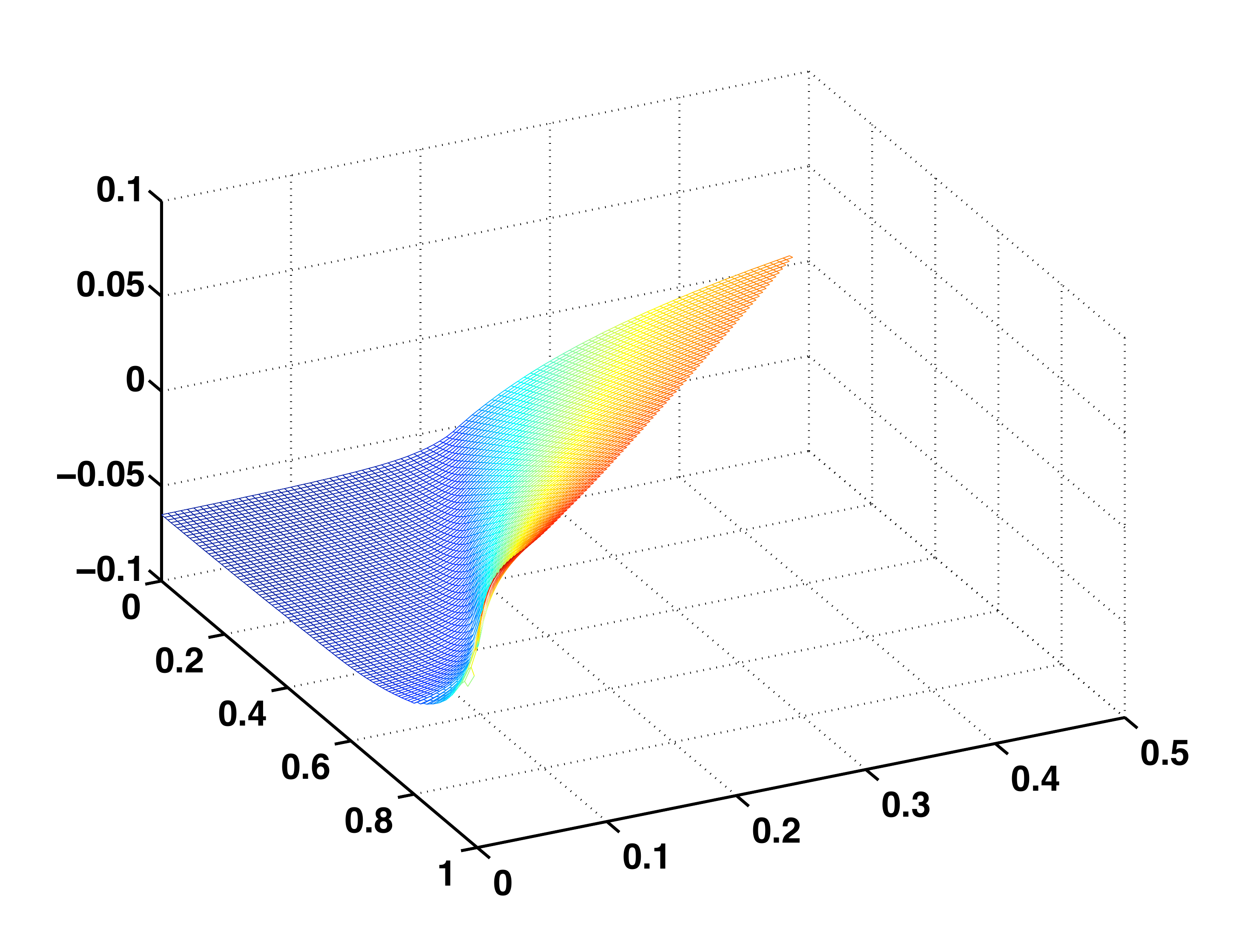}(c)\,\includegraphics[width = 0.46\linewidth]{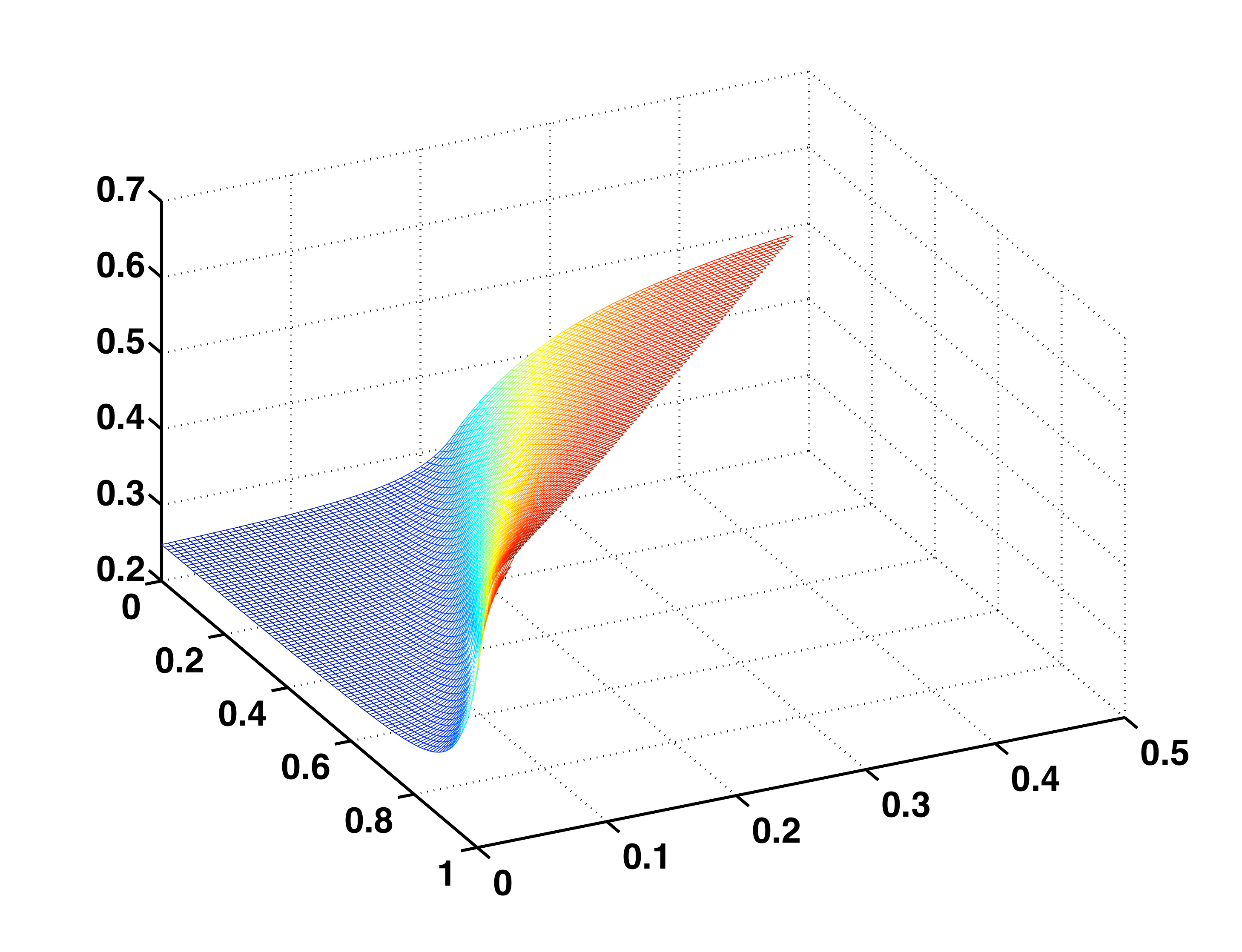}(d)
\caption{(a) The function $u(T,y)$ presumably converges to an eigenvector $\overline{u}$ as $T\to +\infty$, modulo a constant of order $\lambda_{HJ}T$.
(b) Level sets of the function $\overline{u}$ and the {\em separation line} where the optimal control switches from $a$ to $A$. (c) and (d) Derivatives of the function $\overline{u}$ with respect to the first and the second variable.} \label{fig:eigenvector-num}
\end{center}
\end{figure}

\begin{figure}
\begin{center}
\includegraphics[width = 0.46\linewidth]{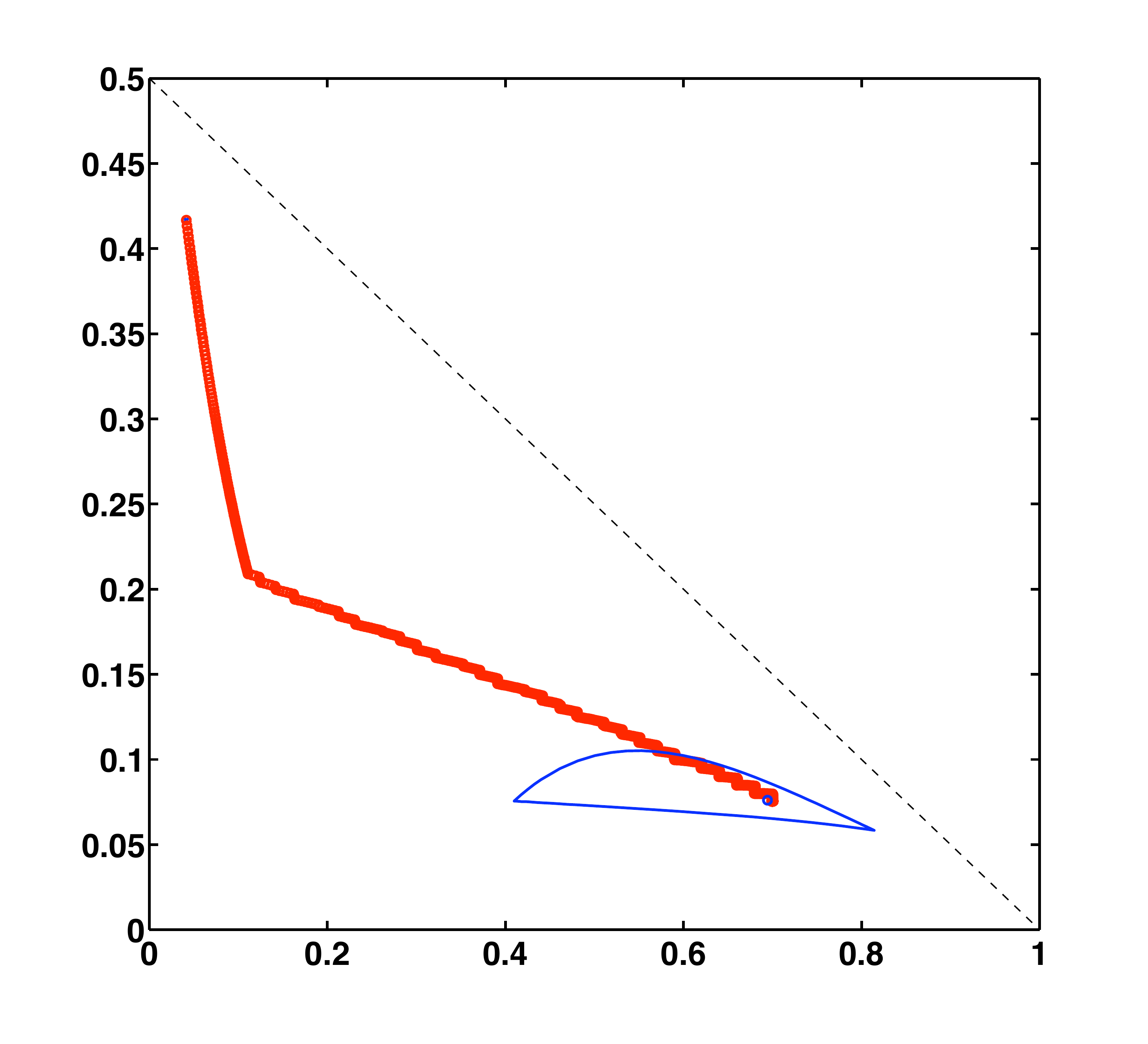}(a)\,\includegraphics[width = 0.46\linewidth]{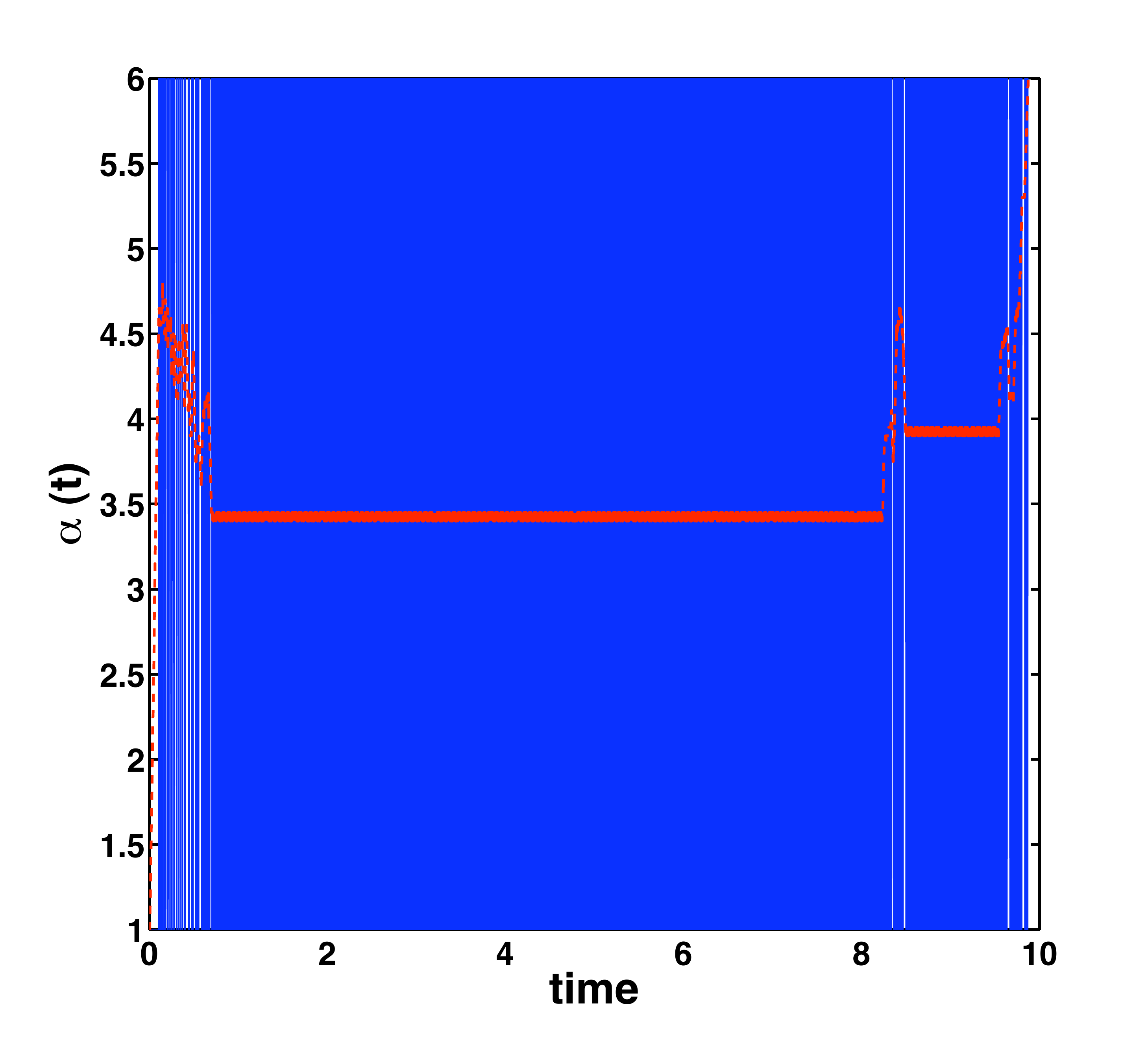}(b) 
\caption{(a) An optimal trajectory starting from the upper corner of the simplex. It presumably converges towards the optimal eigenvector $e_{\alpha^*}$. The optimal trajectory appears to be glued to the separation line.
(b) The corresponding optimal control $\alpha(t)$ is plotted in blue. It oscillates rapidly between the two extremal values $a = 1, A = 6$.
It converges weakly to $\alpha^* = 3.35$ as the time step $\Delta t$ goes to zero (here, $\Delta t = 10^{-3}$),
as can be observed when averaging the function $\alpha(t)$ (in red: local average of $\alpha(t)$ with a span of $100 \Delta t$). } \label{fig:optimal trajectory}
\end{center}
\end{figure}

\begin{table} 
\begin{center}
\begin{tabular}{|l|c|}
\hline
Growth in the first compartment & $\tau_1 = 0.5$  \\
\hline
Growth in the second compartment & $\tau_2 = 5$  \\
\hline
fragmentation in the second compartment & $\beta_2 = 1$  \\
\hline
fragmentation in the third compartment & $\beta_3 = 2$  \\
\hline
minimal rate of sonication & $a = 1$  \\
\hline
maximal rate of sonication & $A = 6$  \\
\hline
optimal rate of sonication (constant control) & $\alpha^* = 3.35$ \\
\hline
space step in the simplex & $\Delta y = 10^{-2}$  \\
\hline
time step & $\Delta t = 10^{-3}$   \\
\hline
final time of computation & $T = 10$  \\
\hline
\end{tabular}
\caption{Set of parameters for illustrations and numerical simulations.
}\label{tab:param}
\end{center}
\end{table}

All the numerical simulations shown in Section \ref{sec:HJB} have been performed for the running example \eqref{eq:example} with    parameters listed in Table \ref{tab:param}. 
We have solved numerically the Hamilton-Jacobi-Bellman equation \eqref{eq:HJB} using a classical upwind scheme for the discretization of the Hamiltonian \eqref{eq:Hamiltonian}. 
We observe that the quantity $\frac{u(T,y)}{T}$ converges to a constant value $\lambda_{HJ}$ which is close to the Perron eigenvalue $\lambda_P(\alpha^*)$. The discrepancy in the limit falls within the range of error due to the numerical scheme. Furthermore, we observe that the function $u(T,y) - \lambda_{HJ} T$ presumably converges to an eigenvector $\overline{u}$ (Figure~\ref{fig:eigenvector-num}).  

We notice that the numerical scheme selects either $a$ or $A$ at each step because of the very definition of the hamiltonian \eqref{eq:Hamiltonian}.  It is quite instructive to plot the line where the control switches from $a$ to $A$, namely where $\la Fy , D_y \overline{u}\ra = 0 $. We call it the {\em separation line} (Figure~\ref{fig:eigenvector-num}b). Apparently the optimal eigenvector $e_{\alpha^*}$ belongs to this line. We observe numerically that the optimal trajectories are glued to this separation line (Figure~\ref{fig:optimal trajectory}a). We observe fast oscillations between the extremal values $a$ and $A$ at the scale of the time step (see~Figure \ref{fig:optimal trajectory}b).  The values $a$ and $A$ are chosen in such a way that local averaging over several time steps yields a constant control $\alpha^*$. We conjecture that the control $\alpha(t)$ obtained through a bang-bang procedure (thus taking extremal values $a$ and $A$) converges weakly to the constant control $\alpha^*$ in infinite horizon. 
\medskip


We now list perspectives for future attention, by discussing the assumptions made in the present work. The first question concerns the higher-dimensional case. Here the two dimensional structure of the simplex plays a crucial role. For instance, it is not even clear how to define properly the ergodic set in higher dimension of space.  
A natural extension would be to study the case where the control $\alpha(t)$ has $N-1$ degree of freedom, where $N$ is the dimension of the simplex (the original  problem \eqref{eq:dynsyst} being of dimension $N+1$). Next we discuss ways to remove the hypotheses {\bf (H1-2-3-4-5)}, separately. 

\begin{figure}
\begin{center}
\includegraphics[width = 0.46\linewidth]{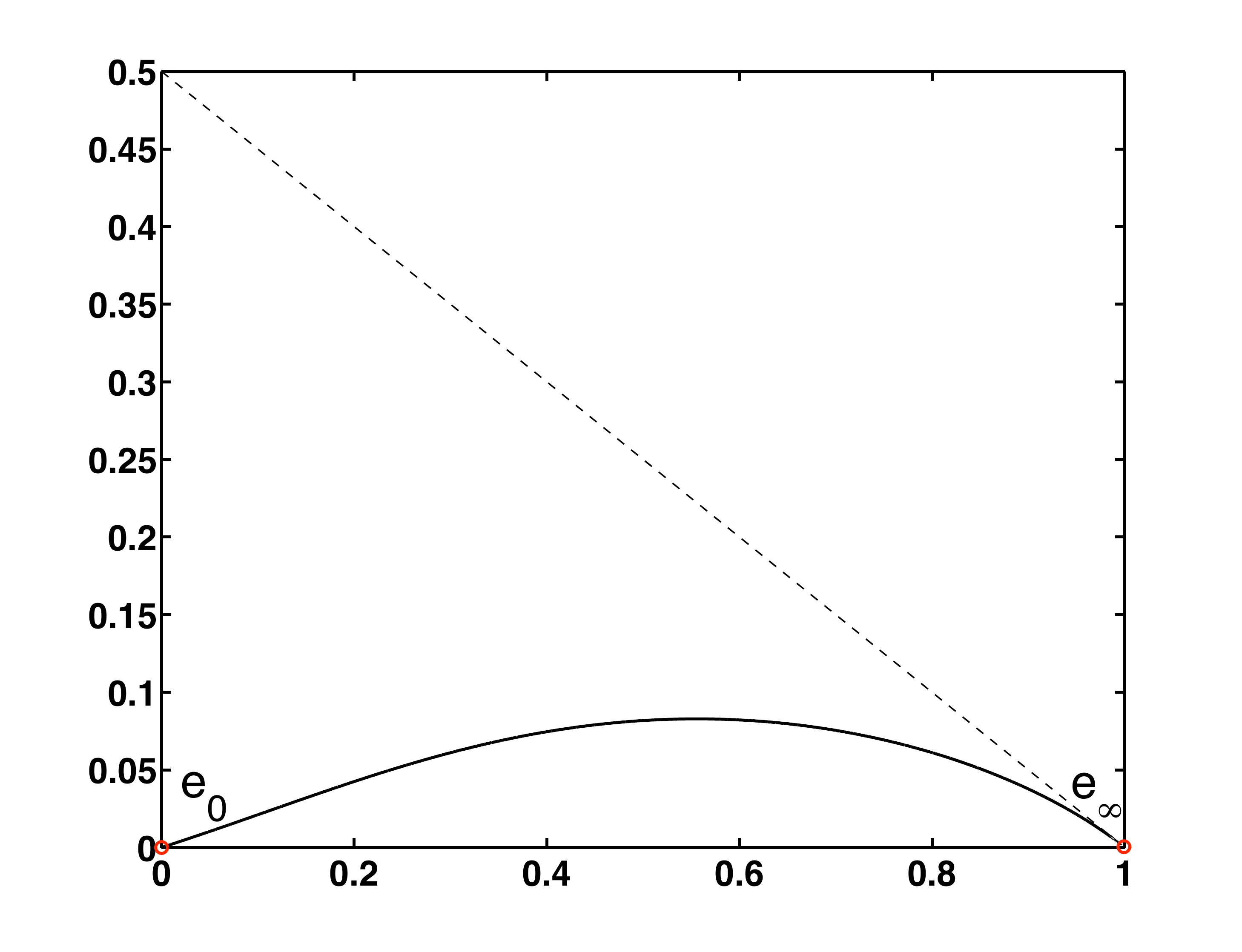}(a)\,
\includegraphics[width = 0.46\linewidth]{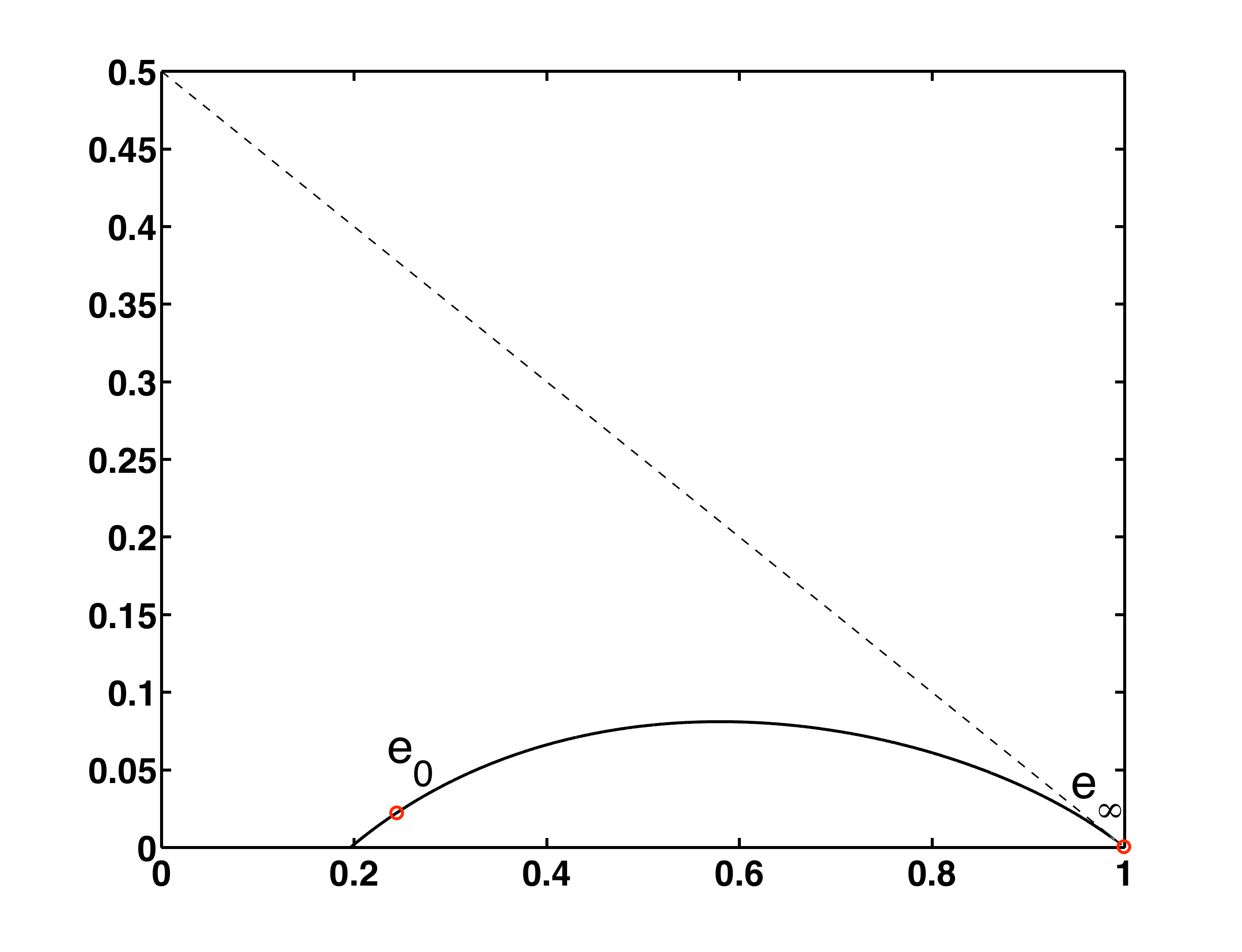}(b)
\\
\includegraphics[width = 0.46\linewidth]{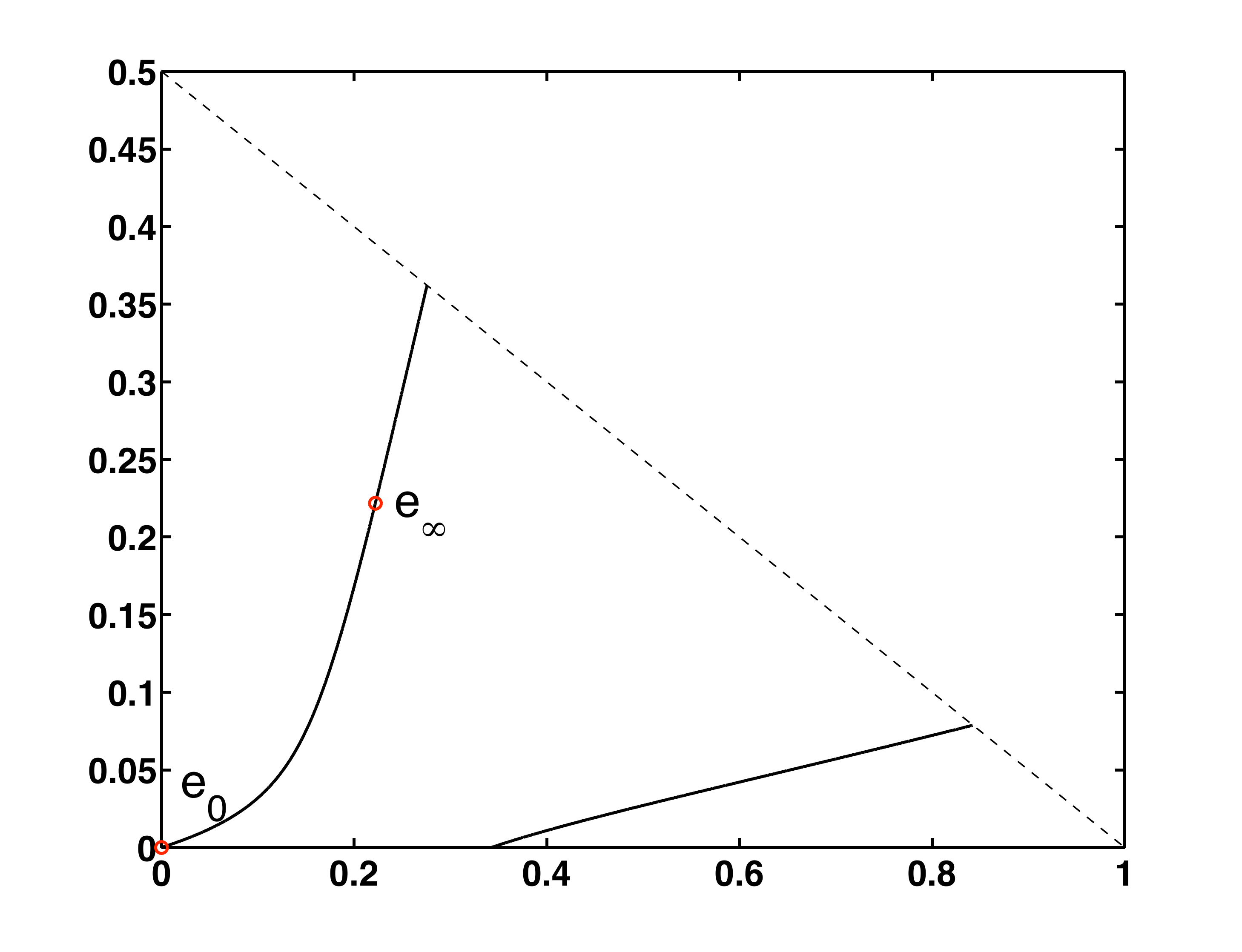}(c)
\,
\includegraphics[width = 0.46\linewidth]{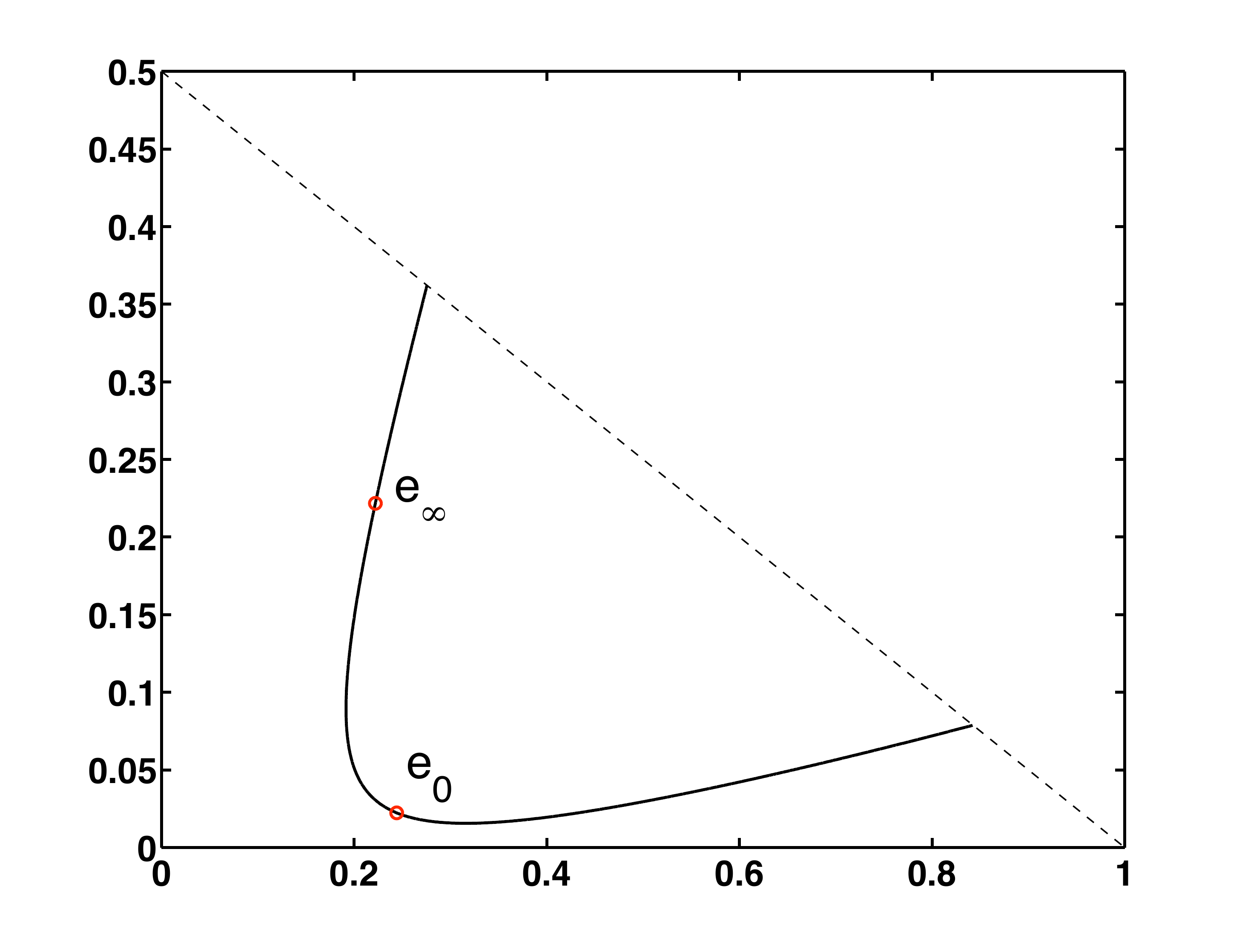}(d)
\caption{Discussion of Hypotheses {\bf (H1)-(H3)}. We have plotted the zero level set of the cubic function $\Phi_0 = \{ y\in \mS \;:\;\varphi(y) = 0\}$ together with the extremal eigenvectors $e_0$ and $e_\infty$ in four cases: (a) $G$ and $F$ are both reducible ({\em i.e.} the running example \eqref{eq:example}), (b) $G$ is irreducible and $F$ is reducible, (c) $G$ is reducible and $F$ is irreducible, (d) $G$ and $F$ are both irreducible. We have modified $F$ such that we still have $m^T F = 0$. }
\label{fig:phi0}
\end{center}
\end{figure}



\begin{figure}
\begin{center}
\includegraphics[width = 0.46\linewidth]{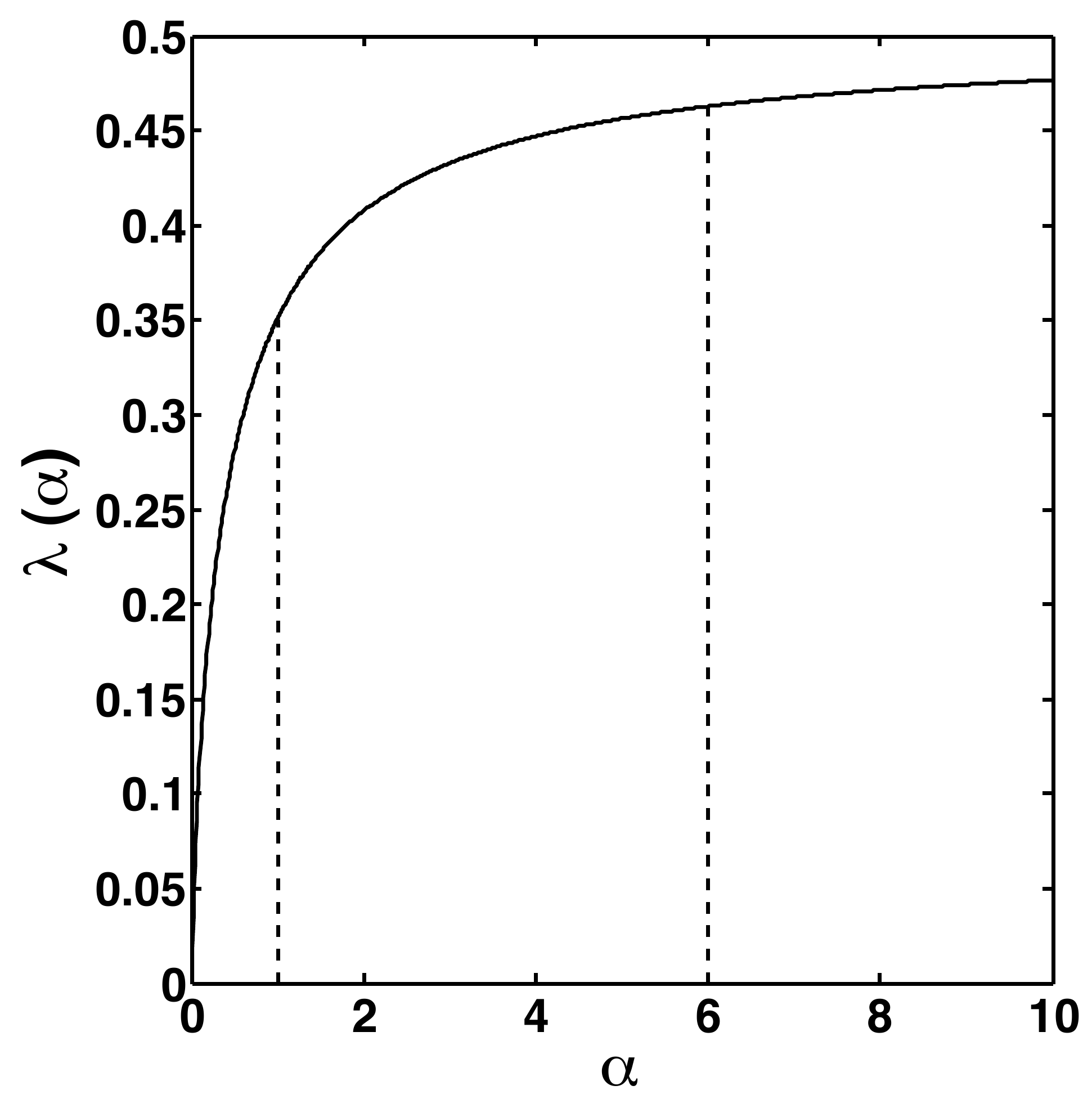}(a)\,
\includegraphics[width = 0.46\linewidth]{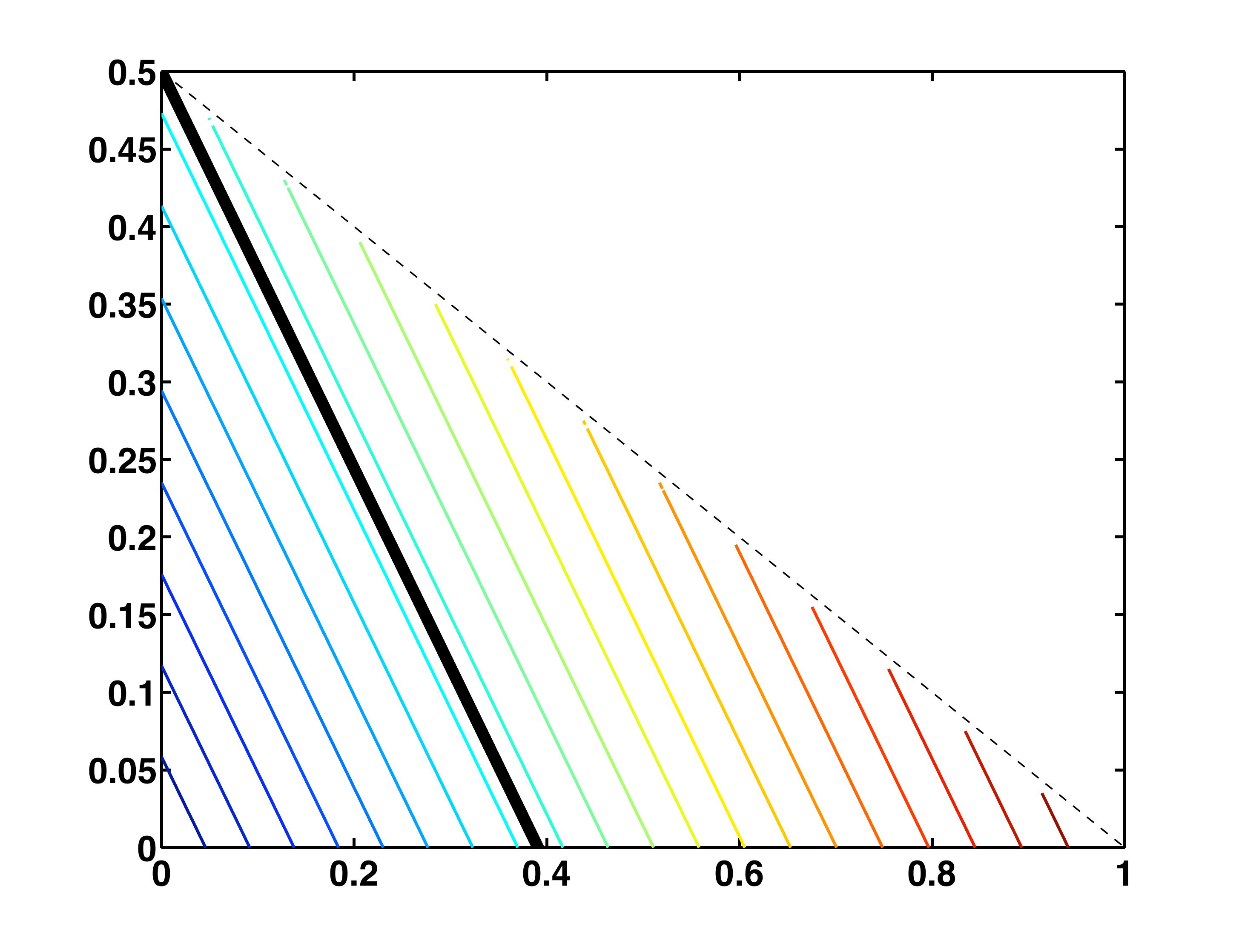}(b) \\
\includegraphics[width = 0.46\linewidth]{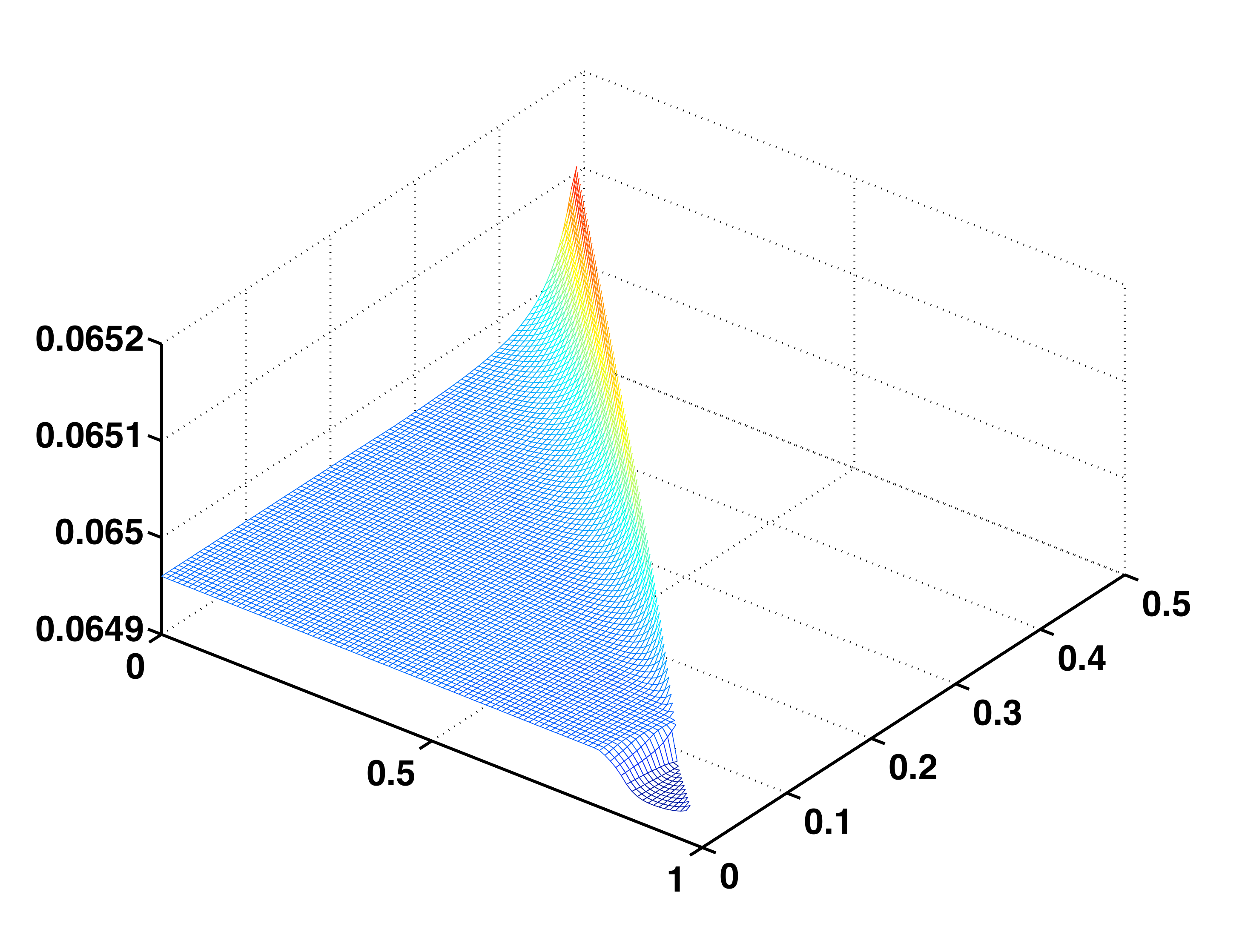}(c)\,\includegraphics[width = 0.46\linewidth]{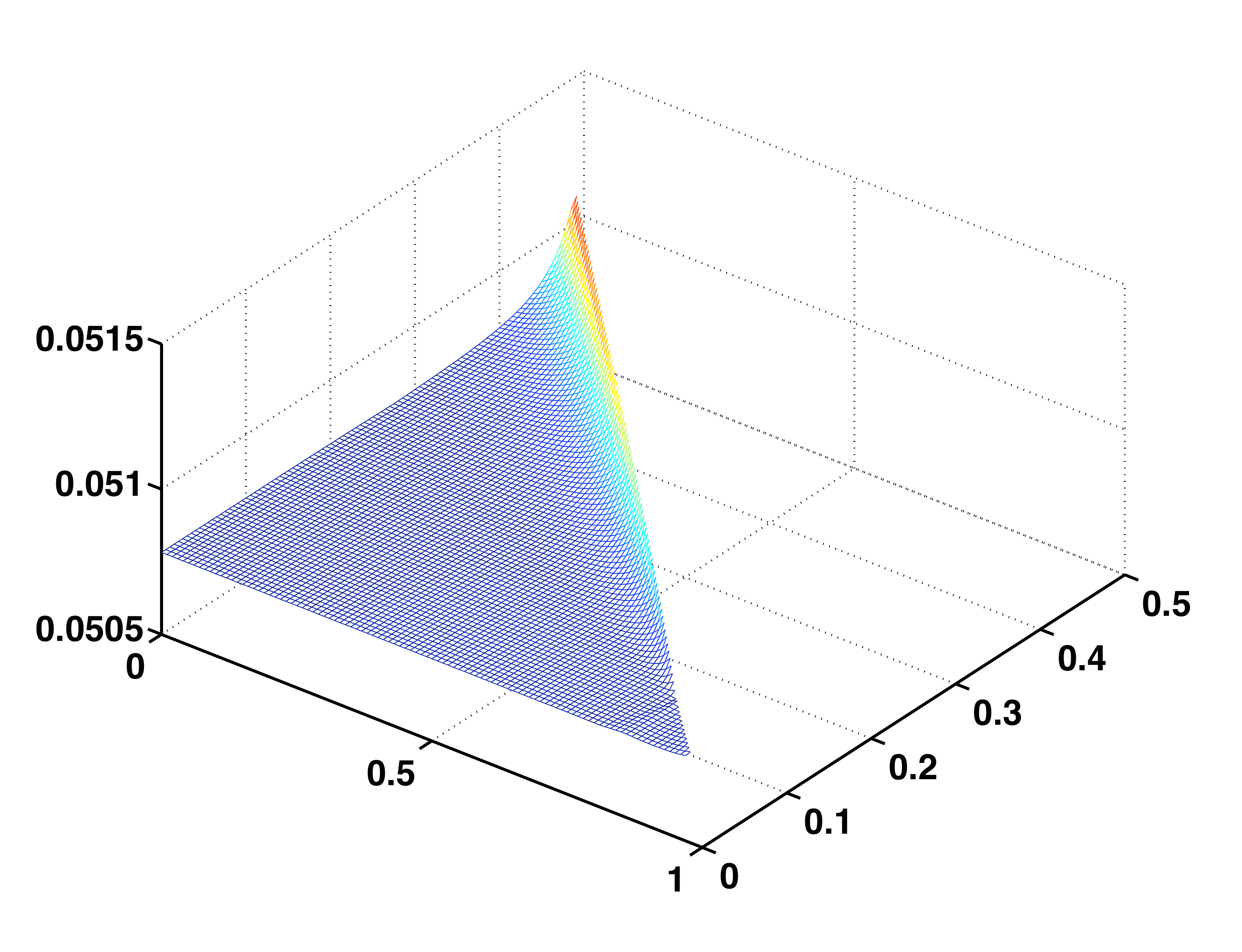}(d)
\caption{Discussion of Hypothesis {\bf (H2)}. Here, $\tau_1 = \tau_2 = 0.5$, $a = 1$ and $A = 6$. The function $\lambda(\alpha)$ is increasing (a). The optimal control is $\alpha(t) = A$. Consequently $\lambda^* = \lambda_P(A)$. The eigenvector is also known: $\overline u(y) = \log \la \phi_A,y\ra$. This claim is confirmed by numerical simulations: the level sets of $u(T,y)$ are straight lines (b). The slope of the lines is in accordance with $\phi_A$ (the thick line is directed by $m\wedge\phi_A\in T\mS$). The gradient of the function $\exp(u(T,y) - u(T,y_0))$ is almost constant: observe the small amplitude in (c) and (d).}
\label{fig:H2cas1}
\end{center}
\end{figure}



\begin{figure}
\begin{center}
\includegraphics[width = 0.6\linewidth]{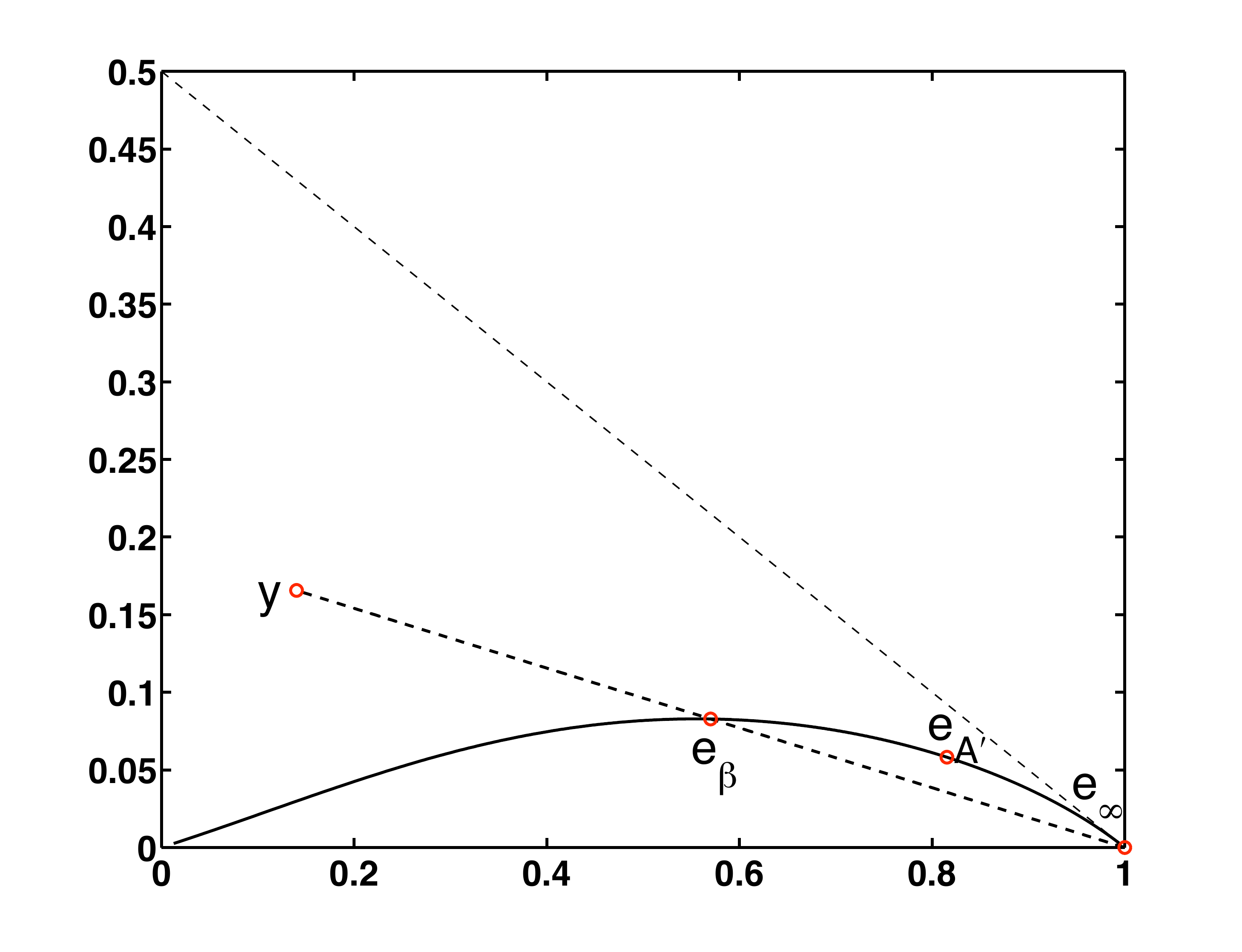}
\caption{Construction of the subset $\mS'$: the segment $[y,e_\infty]$ crosses the line $\Phi_0$ on $e_\beta$ with $\beta\leq A'$}
\label{fig:construction S'}
\end{center}
\end{figure}


\begin{figure}
\begin{center}
\includegraphics[width = 0.46\linewidth]{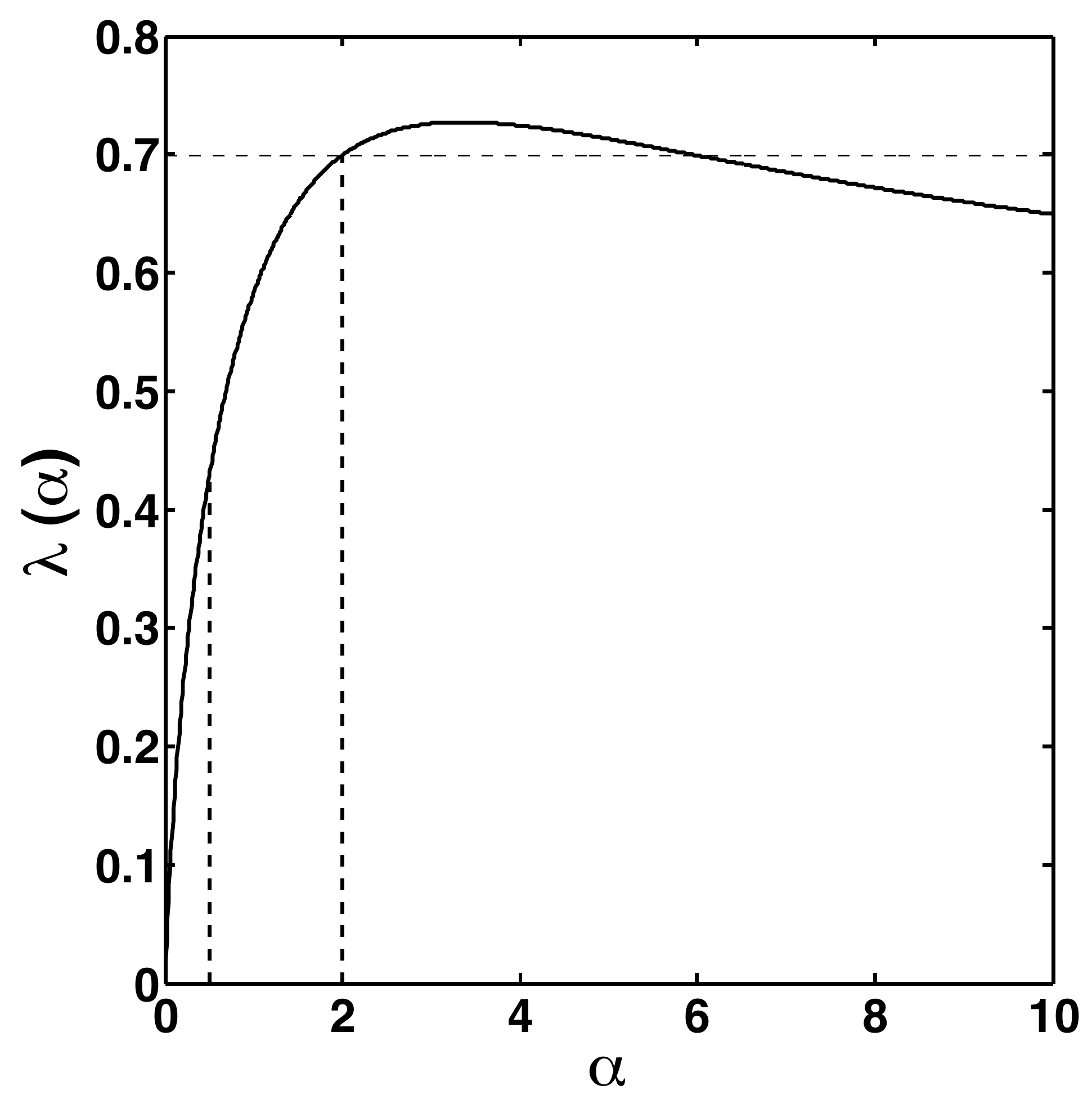}(a)\,
\includegraphics[width = 0.46\linewidth]{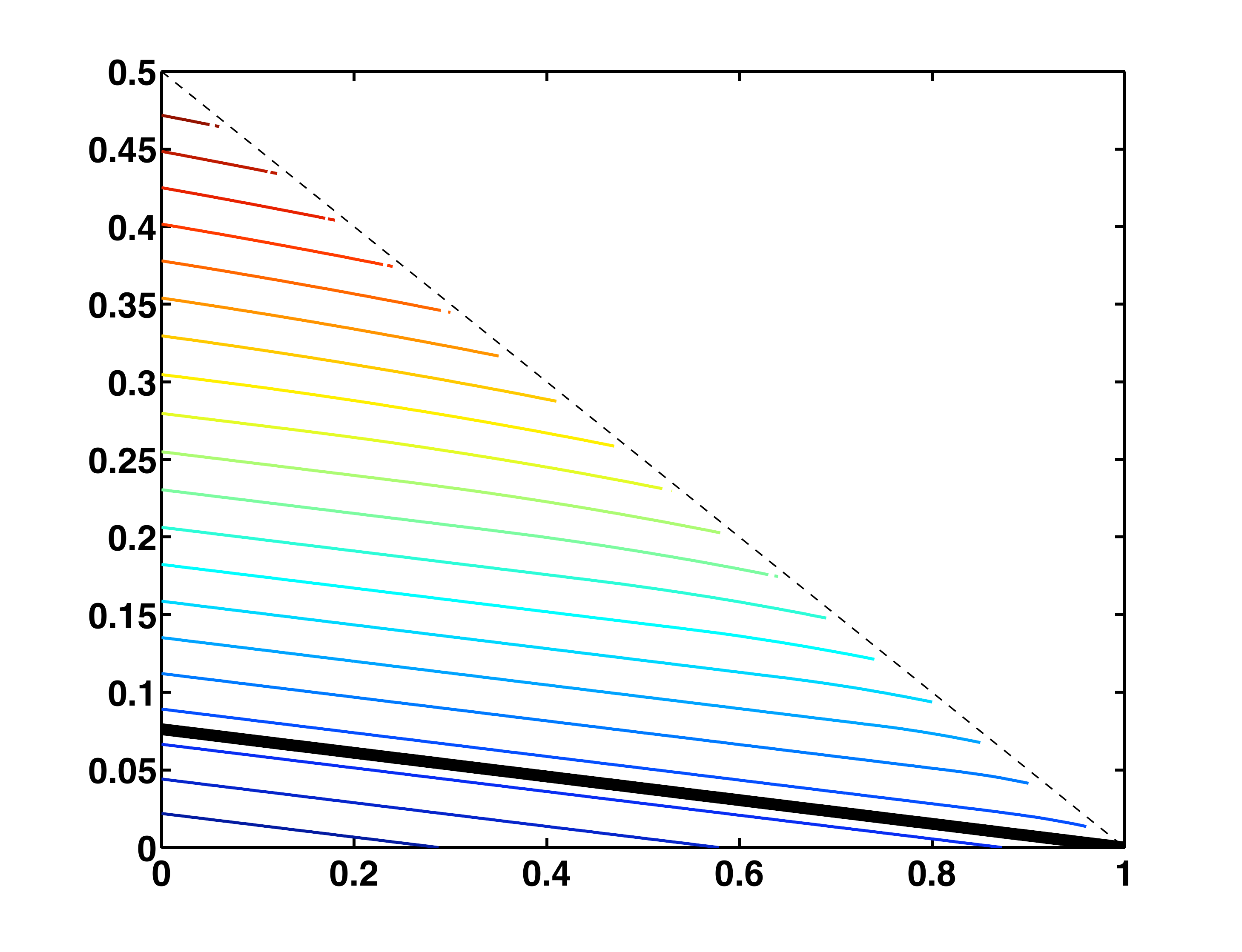}(b) \\
\includegraphics[width = 0.46\linewidth]{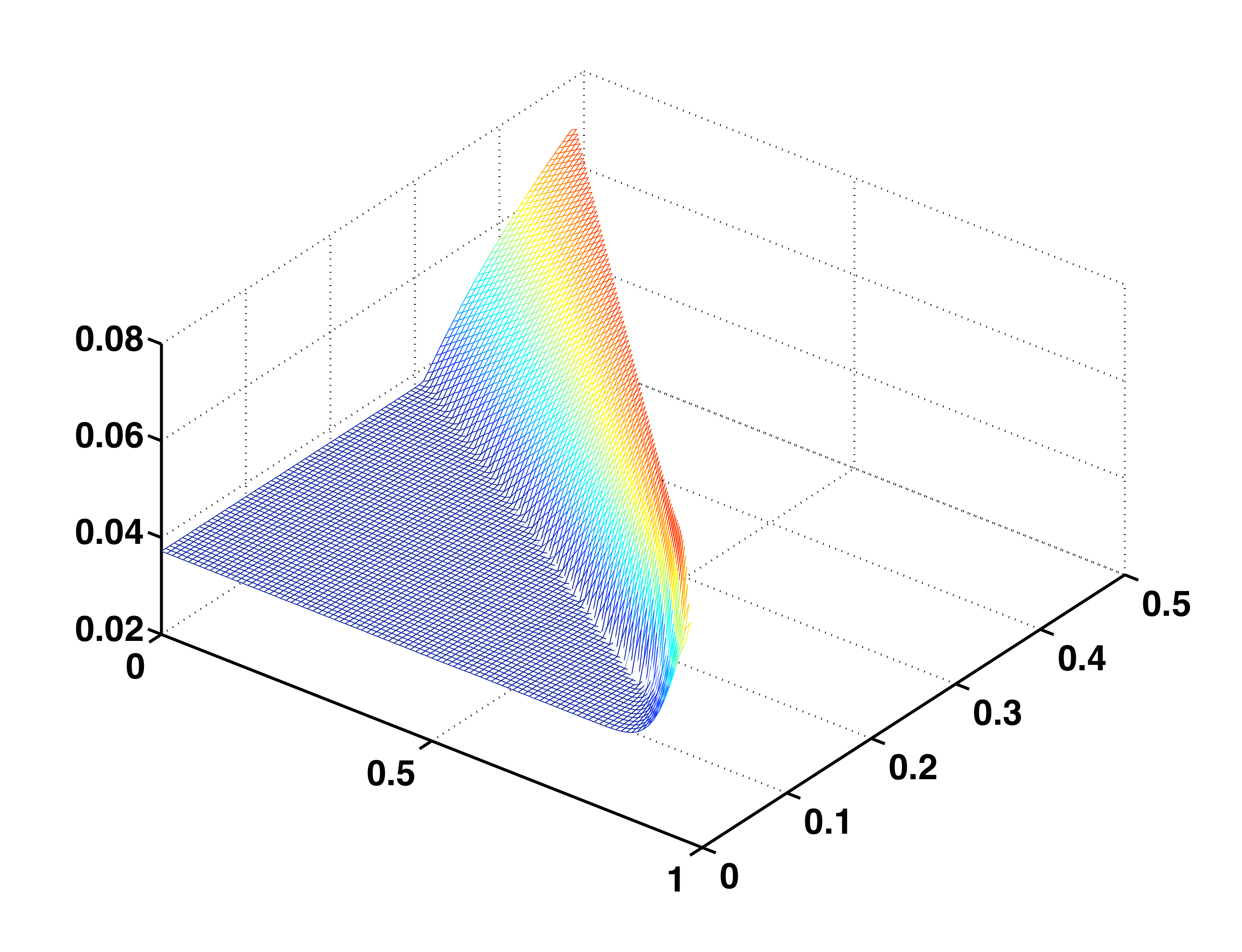}(c)\,
\includegraphics[width = 0.46\linewidth]{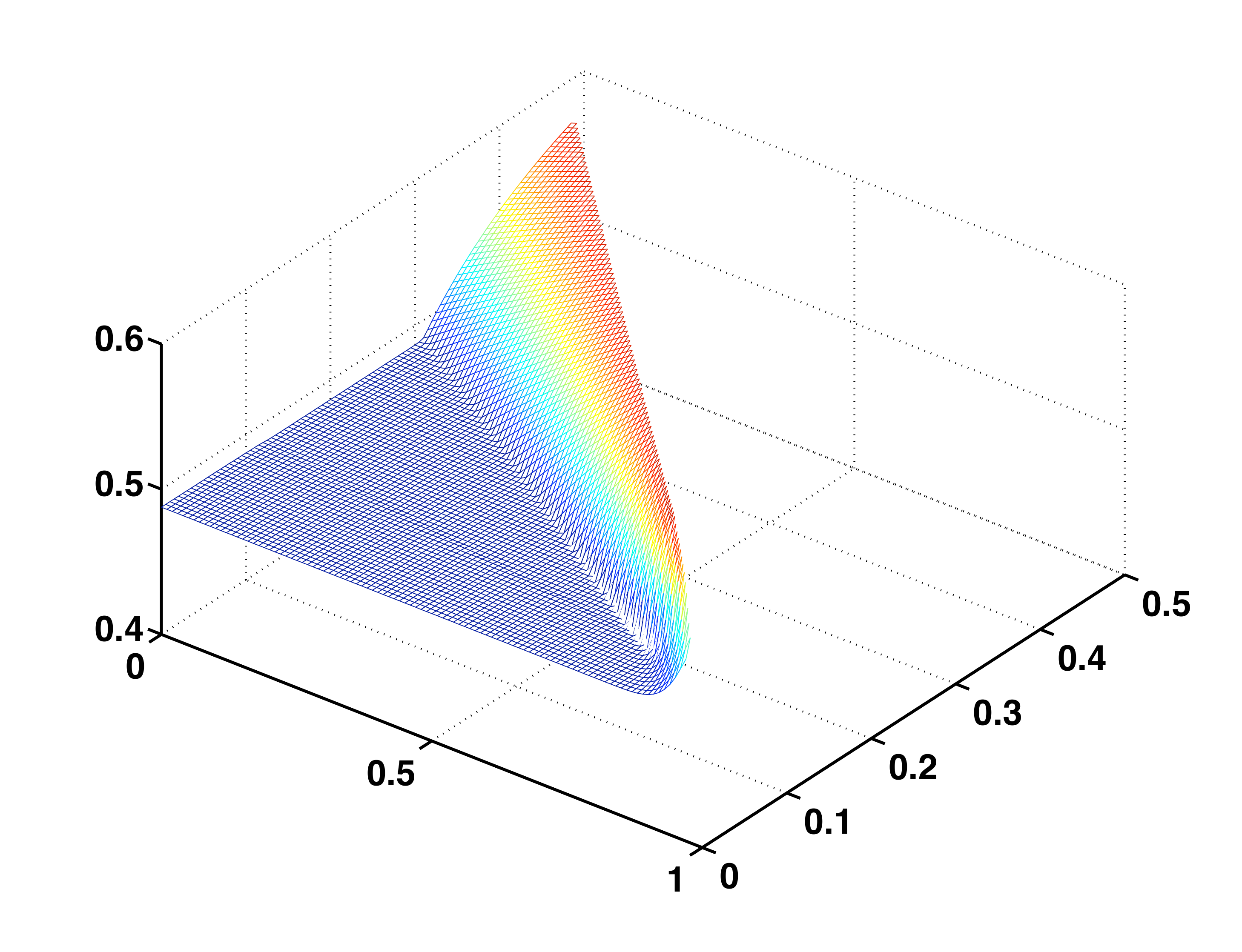}(d)
\caption{Discussion of Hypothesis {\bf (H2)}. Here, $\tau_1 = 0.5$, $ \tau_2 = 5$, $a = 0.5$ and $A = 2$. The function $\lambda(\alpha)$ is increasing up to $\alpha^* = 3.35$, then decreasing (a). The eigenvector is  known on a subpart $\mS'\subset \mS$, where $\mS'$ is defined in the text: $\forall y \in \mS'\; \overline u(y) = \log \la \phi_A,y\ra$. This claim is confirmed by numerical simulations: the level sets of $u(T,y)$ are straight lines on $\mS'$ (b). The slope of the lines is in accordance with $\phi_A$ (the thick line is directed by $m\wedge\phi_A\in T\mS$). The gradient of the function $\exp(u(T,y) - u(T,y_0))$ is plotted in (c) and (d).}
\label{fig:H2cas3}
\end{center}
\end{figure}

\begin{description}
\item[(H1)-(H3)] In the case where $G$ or $F$ is irreducible, the limiting eigenvectors do not lie  on the boundary of the simplex. Therefore the line of Perron eigenvectors $\{ e_\beta: \; 0\leq \beta \leq \infty\}$ does not divide the simplex into two parts. However the line $\Phi_0 = \{ y\in \mS \;:\;\varphi(y) = 0\}$ does so. It is the zero level set of a cubic function. We have plotted in Figure \ref{fig:phi0} several examples, for the cases where $G$ or $F$, or both, are irreducible.
\item[(H2)] When the Perron eigenvalue $\lambda_P(\alpha)$ is nondecreasing with respect to $\alpha$ and bounded (cf. Proposition~\ref{prop:n=3} for the running example), we have $\max_{\alpha\in [a,A]}\lambda_P(\alpha) = \lambda_P(A)$. Our strategy of proof does not contain this particular example (see Step 6).  Nevertheless, under an additional assumption (see below) we exhibit a particular solution to \eqref{eq:HJB} with a different initial condition, namely 
\begin{equation} \label{eq:sol part}
u(t,y) = \lambda_P(A) t + \log \la \phi_A,y\ra\, . 
\end{equation}
Interestingly this coincides with the expansion of  $r_\alpha(T,x)$ in the case of constant control $\alpha = A$ \eqref{eq:perron taylor}. 
Moreover the function $\overline u(y) = \log \la \phi_A,y\ra $ is a good candidate for being the Hamilton-Jacobi eigenvector. It is a viscosity solution of the ergodic stationary equation,
\[ - \lambda_P(A) + H(y,D_y \overline{u}(y)) = 0\, . \]
In Figure \ref{fig:H2cas1} we show   numerical simulations which confirm that this particular solution describes well  the asymptotic behaviour of solutions to \eqref{eq:HJB}. \\
To assert that $u(t,y)$ given by \eqref{eq:sol part} is a particular solution of \eqref{eq:HJB} we check that the optimality condition is verified for $\alpha = A$,
\begin{align*} 
\max_{\alpha\in [a,A]} \la b(y,\alpha), D_y u\ra 
&= \max_{\alpha\in [a,A]} \left\{ \dfrac{\la (G + \alpha F) y, \phi_A\ra }{\la y,\phi_A\ra} - \la m , G y\ra \right\} \\
& = \max_{\alpha\in [a,A]} \left\{ \lambda_P(A) + (\alpha - A) \frac{\la Fy , \phi_A\ra }{\la y , \phi_A\ra }\right\} \\
& = \lambda_P(A) \, .
\end{align*}
The optimality condition is satisfied if   $\la Fy,\phi_A\ra \geq 0 $ for all $y\in \mS$.
This condition is guaranteed under the two following conditions: (i) $\alpha\mapsto \lambda_P(\alpha)$ is nondecreasing,
and (ii) for all $y\in \mS$ the segment $[y,e_\infty]$ crosses the line of eigenvectors $\Phi_0$, where $e_\infty$ is a corner of the simplex (Figure~\ref{fig:construction S'}).
The second condition is satisfied for the running example since in this case $e_0=(0\ 0\ 1/3)^T$ and $e_\infty=(1\ 0\ 0)^T$ are corners of the simplex $\mS$
and the tangent to $\Phi_0$ at the point $e_\infty$, given by the vector $Ge_\infty-\la m,Ge_\infty\ra e_\infty=\tau_1(-2\ 1\ 0)^T,$ coincides with the edge of the simplex which does not contain $e_0$.
We introduce the intersection point $e_\beta\in \Phi_0$. We decompose $y = c_\beta e_\beta + c_\infty e_\infty$, with $c_\beta \geq 0$ (and  $c_\infty\in \RR$). We have on the one hand,
\begin{equation*} 
\lambda_P(A) \la \phi_A , e_\beta \ra  = \phi_A (G + AF) e_\beta  = \lambda_P(\beta) \la \phi_A , e_\beta \ra + (A - \beta ) \la \phi_A, F e_\beta \ra\, .  \end{equation*}
On the other hand we get since $Fe_\infty = 0$,
\begin{align*} 
\la Fy , \phi_A \ra & = c_\beta \la F e_\beta, \phi_A \ra + c_\infty \la F e_\infty, \phi_A \ra  \\
& = c_\beta  \dfrac{\lambda_P(A) - \lambda_P(\beta)}{A - \beta} \la \phi_A , e_\beta \ra  \geq 0 \, .
\end{align*}
The same result holds true in the case where $\lambda_P(\alpha) $ is increasing up to $A$, and satisfies $\lambda_P(\beta) \geq \lambda_P(A)$ for all $\beta >A$. \\
Last but not least, in the case where $\lambda_P(\alpha) $ is increasing up to $A$, and there exists $A'>A$ such that  $\lambda_P(A') = \lambda_P(A)$ the situation is quite different. We choose  the smallest possible $A'>A$. The function defined by \eqref{eq:sol part} is a particular solution only on the subset of the simplex $\mS'$ defined by the following rule: $y\in \mS'$ if the segment $[y,e_\infty]$ crosses the line $\Phi_0$ on $e_\beta$ with $\beta\leq A'$ (Figures \ref{fig:construction S'} and \ref{fig:H2cas3}). 
\item[(H4)] When the quantity \eqref{as:tech} does not have a constant sign, we cannot rule out the situation where the trajectories cross the line of eigenvectors $\Phi_0$ anywhere. This causes problems on the proper definition of the ergodic set $\mZ_0$ (see also discussion of how to remove {\bf (H5)} below). This also causes problems  on the monotonicity formulas~\eqref{eq:betaA}-\eqref{eq:betaa}. However we believe this is just a technical assumption. When the quantity $\la \frac{d e_\beta}{d\beta}  , \frac{\Theta F e_\beta}{|\Theta F e_\beta|} \ra$ changes sign then we may switch the parametrization from $A+\delta$ to $a-\delta$, or {\it vice-versa}, to preserve the monotonicity of $\beta(t)$. 
\item[(H5)] This Hypothesis is essential to build properly the ergodic set $\mZ_0$ in our way. In fact it guarantees that the trajectories which define the boundary $\partial\mZ_0$ lie on the two opposite sides of $\Phi_0$. This rules out a possible spiraling phenomenon. It could be possible to define the ergodic set $\mZ_0$ in another way but this would lead to increasing complexity of the preceding steps. \\
We propose the following construction: starting from $e_a$, the trajectory $\gamma_A^a$ crosses $\Phi_0$ on $e_{\beta_1}$ with $\beta_1 \geq A$ due to Assumption {\bf (H4)}. If $\beta_1>A$ we need to switch the control after crossing $\Phi_0$ to preserve the monotonicity formula (Lemma \ref{lem:dotbeta}). The trajectory starting from $e_{\beta_1}$ with constant control $a$, $\gamma_a^{\beta_1}$ reaches $\Phi_0$ on $e_{\beta_2}$ with $\beta_2\leq a$ due to Assumption {\bf (H4)}. If $\beta_2 = a$ we are done: the two portions of trajectories $\gamma_A^a$ and $\gamma_a^{\beta_1}$ enclose a stable set $\mZ_0$ which is a good candidate to being the ergodic set. \\
If $\beta_2 < a$ however we switch again: the trajectory starting from $e_{\beta_2}$ with constant control $A$, $\gamma_A^{\beta_2}$ reaches $\Phi_0$ on $e_{\beta_3}$ with $\beta_3 > \beta_1$ (because trajectories cannot cross). We define iteratively two monotonic sequences $\beta_{2i}$ and $\beta_{2i+1}$ (resp. decreasing and increasing) such that $\beta_{2i}< a$ and $\beta_{2(i+1)}>A$. The two limits $\underline{\beta} < \beta_{2i} < a < A < \beta_{2(i+1)} < \overline{\beta} $ define the boundary of a stable and controllable set $\mZ_0$ through the following periodic cycle: starting from $e_{\underline{\beta}}$ the trajectory with constant control $A$ reaches $e_{\overline{\beta}}$    in finite time, and the   trajectory starting from $e_{\overline{\beta}}$ with constant control $a$ reaches $e_{\underline{\beta}}$   in finite time.  \\
We cannot rule out the existence of several such periodic cycles (with control $a$ on $\Phi^+$ and control $A$ on $\Phi^-$). In this case the set $\mZ_0$ constructed above is certainly not attractive, because $\mZ_0$ does not contain any other cycle than its boundary. 
\end{description}

\bigskip

\noindent{\em Acknowledgements.} The authors have benefited from stimulating discussions with St\'ephane Gaubert, Thomas Lepoutre and Maxime Zavidovique. They warmly thank Marie Doumic for having coordinated the ANR Grant ANR-09-BLAN-0218 TOPPAZ, in which both authors were involved. The early motivation for this work comes from illuminating discussions with Natacha Lennuzza and Franck Mouthon, from whom the authors have learned everything they know about Prion proliferation and PMCA.

\bigskip


\section*{Appendix}

\appendix

\section{Diagonalisation of $G+\al F$}\label{ap:diag}

In the case of the running example, we give a condition for which the matrix $G+\alpha F$ is diagonalisable (in $\RR$).

\begin{proposition}
Consider the matrices $G$ and $F$ defined in~\eqref{eq:example}, with the condition $\tau_2>2\tau_1.$
Then the matrix $G+\al F$ is diagonalisable in $\R$ for any $\al>0.$
Furthermore, the three real eigenvalues satisfy
$$\lambda_1^*>0>\lb_2>\lb_3.$$
\end{proposition}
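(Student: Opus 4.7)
My plan is to analyze directly the characteristic polynomial $P(X)$ of $G+\alpha F$ already computed in the proof of Proposition~\ref{prop:n=3}, and to show it has three distinct real roots by an elementary sign-chasing argument. The Perron root $\lambda_1^*=\lambda_P(\alpha)>0$ is automatic (Perron--Frobenius, and also $P(0)=-\alpha\tau_1\beta_3(\tau_2+\alpha\beta_2)<0$). Vieta's formulas give a product of eigenvalues equal to $\det(G+\alpha F)=\alpha\tau_1\beta_3(\tau_2+\alpha\beta_2)>0$ and a sum equal to $-\tau_1-\tau_2-\alpha(\beta_2+\beta_3)<0$, so the other two eigenvalues, \emph{if real}, are necessarily both strictly negative. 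The whole issue is therefore to rule out a complex conjugate pair and to ensure strict distinctness.

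To do so I would exhibit, for every $\alpha>0$, a point on the negative real axis where $P$ is strictly positive; combined with $P(0)<0$ and $P(\pm\infty)=\pm\infty$ this forces three sign changes and three real roots sitting in three disjoint intervals. Two strategic evaluations do the job, and a routine expansion yields
\[
P(-\tau_2) = \alpha(\tau_1+\tau_2)\beta_2(\tau_2-\alpha\beta_3) + \alpha\tau_2\beta_3(\tau_2-2\tau_1), \qquad P(-\alpha\beta_3) = \alpha\tau_2\beta_3(\alpha\beta_3-2\tau_1).
\]
Under the standing hypothesis $\tau_2>2\tau_1$ these combine neatly: if $\alpha\beta_3\le\tau_2$ then both summands of $P(-\tau_2)$ are nonnegative and the second is strictly positive, so $P(-\tau_2)>0$; while if $\alpha\beta_3>\tau_2$ then \emph{a fortiori} $\alpha\beta_3>2\tau_1$, so $P(-\alpha\beta_3)>0$. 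Setting $\xi:=\max(\tau_2,\alpha\beta_3)$, we therefore have $P(-\xi)>0$ in every case.

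The conclusion is then immediate: the sign pattern $P(-\infty)=-\infty$, $P(-\xi)>0$, $P(0)<0$, $P(+\infty)=+\infty$ produces one real root in each of the disjoint intervals $(-\infty,-\xi)$, $(-\xi,0)$, $(0,+\infty)$, which I label $\lambda_3<\lambda_2<\lambda_1^*$. They are pairwise distinct, so $G+\alpha F$ is diagonalisable over~$\RR$ and the required ordering $\lambda_1^*>0>\lambda_2>\lambda_3$ holds. The only non-routine step is the choice of the two witness points $-\tau_2$ and $-\alpha\beta_3$: the cases $\alpha\beta_3\le\tau_2$ and $\alpha\beta_3>2\tau_1$ jointly cover $(0,+\infty)$ precisely because $2\tau_1<\tau_2$, the very same threshold that governs the existence of an interior maximum of $\lambda_P$ in Proposition~\ref{prop:n=3}.
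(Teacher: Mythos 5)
Your proof is correct and follows essentially the same route as the paper's: both evaluate the characteristic polynomial at the same two witness points $-\tau_2$ and $-\alpha\beta_3$, combine this with $P(0)<0$ and the behaviour at $\pm\infty$, and conclude by sign changes that there are three distinct real roots ordered as $\lambda_1^*>0>\lambda_2>\lambda_3$. Your factored form of $P(-\tau_2)$ and the case split on $\alpha\beta_3\lessgtr\tau_2$ is a slightly tidier packaging of the paper's threshold comparison, but it is not a different argument.
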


\begin{proof}
We compute the value of the characteristic polynomial of $G+\al F$ at $x=0$
$$P(0)=-\al\tau_1\tau_2\beta_3-\al^2\tau_1\beta_2\beta_3<0$$
as for two well chosen negative values
$$P(-\al\beta_3)=\al\tau_2\beta_3(\al\beta_3-2\tau_1)$$
and
$$P(-\tau_2)=\al\beta_2(\tau_2^2+\tau_1\tau_2)+\al\beta_3(\tau_2^2-2\tau_1\tau_2)-\al^2(\tau_1+\tau_2)\beta_2\beta_3.$$
We notice that
$$P(-\al\beta_3)>0\quad\Leftrightarrow\quad \al>\frac{2\tau_1}{\beta_3}$$
and
$$P(-\tau_2)>0\quad\Leftrightarrow\quad\al<\frac{\tau_2}{\beta_3}+\frac{\tau_2}{\tau_1\tau_2}\frac{\tau_2-2\tau1}{\beta_2}.$$
Because $\tau_2>2\tau_1,$ we have
$$\frac{\tau_2}{\beta_3}+\frac{\tau_2}{\tau_1\tau_2}\frac{\tau_2-2\tau1}{\beta_2}>\frac{\tau_2}{\beta_3}>\frac{2\tau_1}{\beta_3}$$
so if $P(-\al\beta_3)\leq0,$ then $P(-\tau_2)>0.$
Finally $P$ is a third order polynomial which satisfies the following properties:
(i) it tends to $-\infty$ at $-\infty$; (ii)
it takes positive values for negative $x$; (iii)
it is negative at $x=0$; (iv)
it tends to $+\infty$ at $+\infty$.
Thus $P$ has 3 real roots and it proves the proposition.
\end{proof}

\section{Criterion for Assumption~{\bf (H4)}}\label{ap:tech}

\begin{figure}
\begin{center}
\includegraphics[width = 0.6\linewidth]{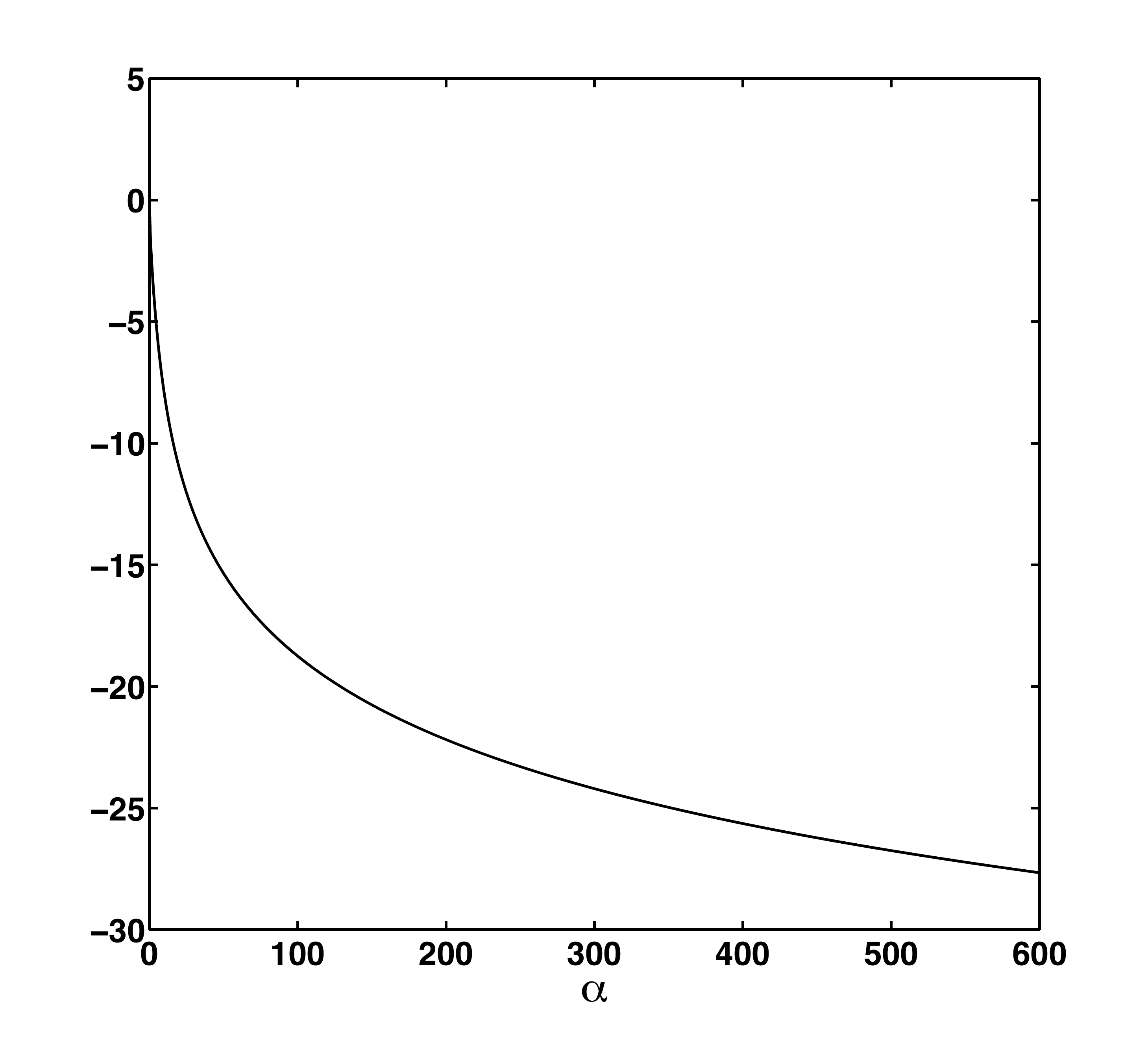}
\caption{Numerical check of the Assumption {\bf (H4)} for the running example, by using the formula in Proposition~\ref{prop:criterion}.
Plot of the quantity $- \la\frac{de_\al}{d\al},\Theta Fe_\al\ra$ as a function of $\alpha$ in log-scale.}
\end{center}
\end{figure}

\begin{proposition}\label{prop:criterion}
Denoting $(e_1,e_2,e_3)$ a basis of eigenvectors for $G+\al F,$ so that $e_1=e_\al,$ and $(\phi_1,\phi_2,\phi_3)$ a basis of dual eigenvectors, we have the formula
$$\la\frac{de_\al}{d\al},\Theta Fe_\al\ra=\frac{\lb_2-\lb_3}{(\lambda_1 -\lb_2)(\lambda_1 -\lb_3)}(\phi_2Fe_1)(\phi_3Fe_1)\la e_2-e_1,\Theta(e_3-e_1)\ra.$$
\end{proposition}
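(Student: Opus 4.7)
The plan is to reduce the identity to a direct algebraic computation in the eigenbasis $(e_1,e_2,e_3)$ of $G+\alpha F$. First I would fix the natural normalization $\la m,e_i\ra = 1$ for $i=1,2,3$, so that each eigenvector lies on the affine hyperplane containing the simplex $\mS$. Under this choice the biorthogonal dual basis satisfies $m^T = \phi_1^T + \phi_2^T + \phi_3^T$, and consequently the standing assumption $m^T F = 0$ produces the key scalar identity
\begin{equation}
 a_1+a_2+a_3 = 0, \qquad a_i := \phi_i F e_1. \label{eq:planA}
\end{equation}
This relation is the arithmetic engine of the proof.

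Next I would differentiate the Perron eigenvalue relation $(G+\alpha F)e_\alpha = \lambda_1 e_\alpha$ with respect to $\alpha$ and test against $\phi_i$ for $i=2,3$. Using biorthogonality this gives
$$ c_i := \phi_i \frac{de_\alpha}{d\alpha} = \frac{a_i}{\lambda_1-\lambda_i}, \qquad i=2,3.$$
The remaining coefficient is fixed by the tangency constraint $\la m,\frac{de_\alpha}{d\alpha}\ra = 0$ (valid because $e_\alpha\in \mS$), which in view of $\la m,e_i\ra=1$ yields $c_1 = -c_2-c_3$. Expanding $F e_1 = \sum_j a_j e_j$ as well, the bilinear form becomes
$$ \la \tfrac{de_\alpha}{d\alpha}, \Theta Fe_1\ra = \sum_{i<j} (c_i a_j - c_j a_i)\,\omega_{ij}, \qquad \omega_{ij}:= \la e_i,\Theta e_j\ra,$$
because $\la v,\Theta v\ra = 0$ and $\omega_{ij}$ is skew-symmetric. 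A short computation, substituting the explicit formulas for the $c_i$ and using \eqref{eq:planA} to eliminate $a_1$ whenever it appears, shows that the three coefficients equal $K$, $-K$, $K$ respectively (for $(i,j)=(1,2),(1,3),(2,3)$), where
$$K = \frac{(\lambda_2-\lambda_3)\, a_2 a_3}{(\lambda_1-\lambda_2)(\lambda_1-\lambda_3)} = \frac{(\lambda_2-\lambda_3)(\phi_2 Fe_1)(\phi_3 Fe_1)}{(\lambda_1-\lambda_2)(\lambda_1-\lambda_3)}$$
is exactly the prefactor of the claim. It remains to recognize the combination $\omega_{12}-\omega_{13}+\omega_{23}$, by expanding $\la e_2-e_1, \Theta(e_3-e_1)\ra$ and invoking the skew-symmetry of $\la\cdot,\Theta\cdot\ra$ together with $\la e_i,\Theta e_i\ra=0$.

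The main obstacle is not the linear algebra itself but the normalization: the formula in the proposition is not invariant under independent rescalings of $e_2$ and $e_3$, and the cancellation $a_1+a_2+a_3=0$ only holds when $\la m,e_i\ra$ is the same for $i=1,2,3$. The choice $\la m,e_i\ra=1$ is the one that makes both the left-hand side geometric (all three vectors live on the simplex hyperplane, so $e_i - e_1 \in T\mS$) and the right-hand side well-defined. Once this choice is identified, everything else is systematic bookkeeping in the eigenbasis.
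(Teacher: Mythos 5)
Your proof is correct and follows essentially the same route as the paper: differentiate $(G+\alpha F)e_\alpha=\lambda_1 e_\alpha$, test against $\phi_2,\phi_3$ to get $\phi_i\frac{de_\alpha}{d\alpha}=\frac{\phi_iFe_1}{\lambda_1-\lambda_i}$, use $m^TF=0$ and $\la m,\frac{de_\alpha}{d\alpha}\ra=0$ to eliminate the first coefficients, and expand the skew form $\la\cdot,\Theta\cdot\ra$ in the eigenbasis (the paper merely groups into $e_2-e_1$, $e_3-e_1$ before expanding, whereas you expand fully and collect). Your explicit remark that the normalization $\la m,e_i\ra=1$ for $i=2,3$ is needed for the cancellation and for $e_i-e_1\in T\mS$ is a point the paper leaves implicit, but it does not change the substance.
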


\begin{proof}
Decompose $\frac{de_\al}{d\al}$ and $Fe_\al$ along the basis $(e_1,e_2,e_3):$
$$\frac{de_\al}{d\al}=a_1e_1+a_2e_2+a_3e_3,\qquad Fe_\al=b_1e_1+b_2e_2+b_3e_3.$$
For all $\al$ we have $\la m,e_\al\ra=1$ because $e_\al\in\mS,$ so
$\la m,\frac{de_\al}{d\al}\ra=0.$
Because $m^TF=0,$ we also have
$\la m,Fe_\al\ra=0.$
It leads to the relations
$$a_1+a_2+a_3=b_1+b_2+b_3=0.$$
As a consequence we have
\begin{align*}
\la\frac{de_\al}{d\al},\Theta Fe_\al\ra & = \la a_2(e_2-e_1)+a_3(e_3-e_1),b_2\Theta(e_2-e_1)+b_3\Theta(e_3-e_1)\ra\\
& = a_2b_3\la(e_2-e_1),\Theta(e_3-e_1)\ra+a_3b_2\la(e_3-e_1),\Theta(e_2-e_1)\ra.
\end{align*}
Now we look for a relation between $a_i$ and $b_i$ and we start from
$$(G+\al F)e_\al=\lb(\al)e_\al$$
which gives after differentiation with respect to $\al$
$$(G+\al F)\frac{de_\al}{d\al}+Fe_\al=\lb_\al\frac{de_\al}{d\al}+\lb'(\al)e_\al.$$
Testing against $\phi_i$ with $i\in\{2,3\},$ we obtain
$$\lb_i\phi_i\frac{de_\al}{d\al}+\phi_iFe_1 = \lambda_1 \phi_i\frac{de_\al}{d\al}.$$
Finally we find
$$b_i=\phi_iFe_1=(\lambda_1 -\lb_i)\phi_i\frac{de_\al}{d\al}=(\lambda_1 -\lb_i)a_i$$
and the result of the proposition follows.
\end{proof}

\bibliographystyle{abbrv}
\bibliography{HJB}

%
%
%
%
%
%
%
%
%
%

\end{document}